\definecolor{webgreen}{rgb}{0,.5,0}
\definecolor{webbrown}{rgb}{.6,0,0}
\newcommand{\seqnum}[1]{\href{https://oeis.org/#1}{\rm \underline{#1}}}
\newtheorem{theorem}{Theorem}[section]
\newtheorem{lemma}[theorem]{Lemma}
\newtheorem{corollary}[theorem]{Corollary}
\newtheorem{definition}{Definition}
\newcommand{\INFIX}{\geq_{\rm inf}}
\newcommand{\SUFFIX}{\geq_{\rm suff}}
\newcommand{\PREFIX}{\geq_{\rm pref}}
\newcommand{\VMAT}{\begin{bmatrix}
	1 & 0
\end{bmatrix}}
\newcommand{\WMAT}{\begin{bmatrix}
	1 \\ 0
\end{bmatrix}
}
\newcommand{\ZMAT}{\begin{bmatrix}
	0 & 0 \\
	1 & 0
\end{bmatrix}
}
\newcommand{\IMAT}{\begin{bmatrix}
	1 & 0 \\
	0 & 1
\end{bmatrix}
}
\definecolor{green}{RGB}{0,127,0}
\definecolor{red}{RGB}{200,0,0}
\begin{document}

		\title{Record-Setters in the Stern Sequence}
		
		\author{Ali Keramatipour\\
		School of Electrical and Computer Engineering\\
		University of Tehran\\
		Tehran\\
		Iran\\
		\href{mailto:alikeramatipour@ut.ac.ir}{\tt alikeramatipour@ut.ac.ir} \\
		\and
		Jeffrey Shallit\\
		School of Computer Science\\
University of Waterloo\\
Waterloo, ON  N2L 3G1 \\
Canada\\
\href{mailto:shallit@uwaterloo.ca}{\tt shallit@uwaterloo.ca}}
		
		\maketitle
		
		\begin{abstract}
			Stern's diatomic series, denoted by $(a(n))_{n \geq 0}$, is defined by the
			recurrence relations 
			$a(2n) = a(n)$ and $a(2n + 1) = a(n) + a(n + 1)$ for $n \geq 1$,
			and initial values $a(0) = 0$ and $a(1) = 1$.
			A record-setter for a sequence $(s(n))_{n \geq 0}$ is an index $v$ such that $s(i) < s(v)$ holds for all $i < v$.  In this
			paper, we give a complete description of the record-setters for
			the Stern sequence.
		\end{abstract}

	\section{Introduction}\label{section-introduction}
	
	Stern's sequence $(a(n))_{n \geq 0}$, defined by the recurrence relations
	$$ a(2n) = a(n), \quad a(2n+1) = a(n)+a(n+1),$$
	for $n \geq 0$, and initial values $a(0) = 0$, $a(1) = 1$,
	has been studied for over 150 years.  It was introduced by Stern in 1858 \cite{Stern:1858}, and later studied by
	Lucas \cite{Lucas:1878}, Lehmer \cite{Lehmer:1929}, and many others.  For a survey of the Stern sequence and its amazing properties, see the papers of Urbiha \cite{Urbiha:2001} and Northshield
	\cite{Northshield:2010}.   It is an example of a $2$-regular sequence \cite[Example 7]{Allouche&Shallit:1992}.  The first few values of this sequence are
	given in Table~\ref{tab1}; it is sequence \seqnum{A002487} in
	the {\it On-Line Encyclopedia of Integer Sequences} (OEIS)\cite{Sloane:2022}.
	\begin{table}[H]
	\begin{center}
	\begin{tabular}{c|cccccccccccccccc}
		$n$ 		& 0 & 1 & 2 & 3 & 4 & 5 & 6 & 7 & 8 & 9 & 10 & 11 & 12 & 13 & 14 & 15\\
		\hline
				$a(n)$ 	& 0 & 1 & 1 & 2 & 1 & 3 & 2 & 3 & 1 & 4 & 3 & 5  & 2  & 5  & 3  & 4  
\end{tabular}
\end{center}
\caption{First few values of the Stern sequence.}
\label{tab1}
	\end{table}
	
	The sequence $a(n)$ rises and falls in a rather complicated way; see
	Figure~\ref{fig1}.
	\begin{figure}[htb]
	\begin{center}
	    \includegraphics[width=6.5in]{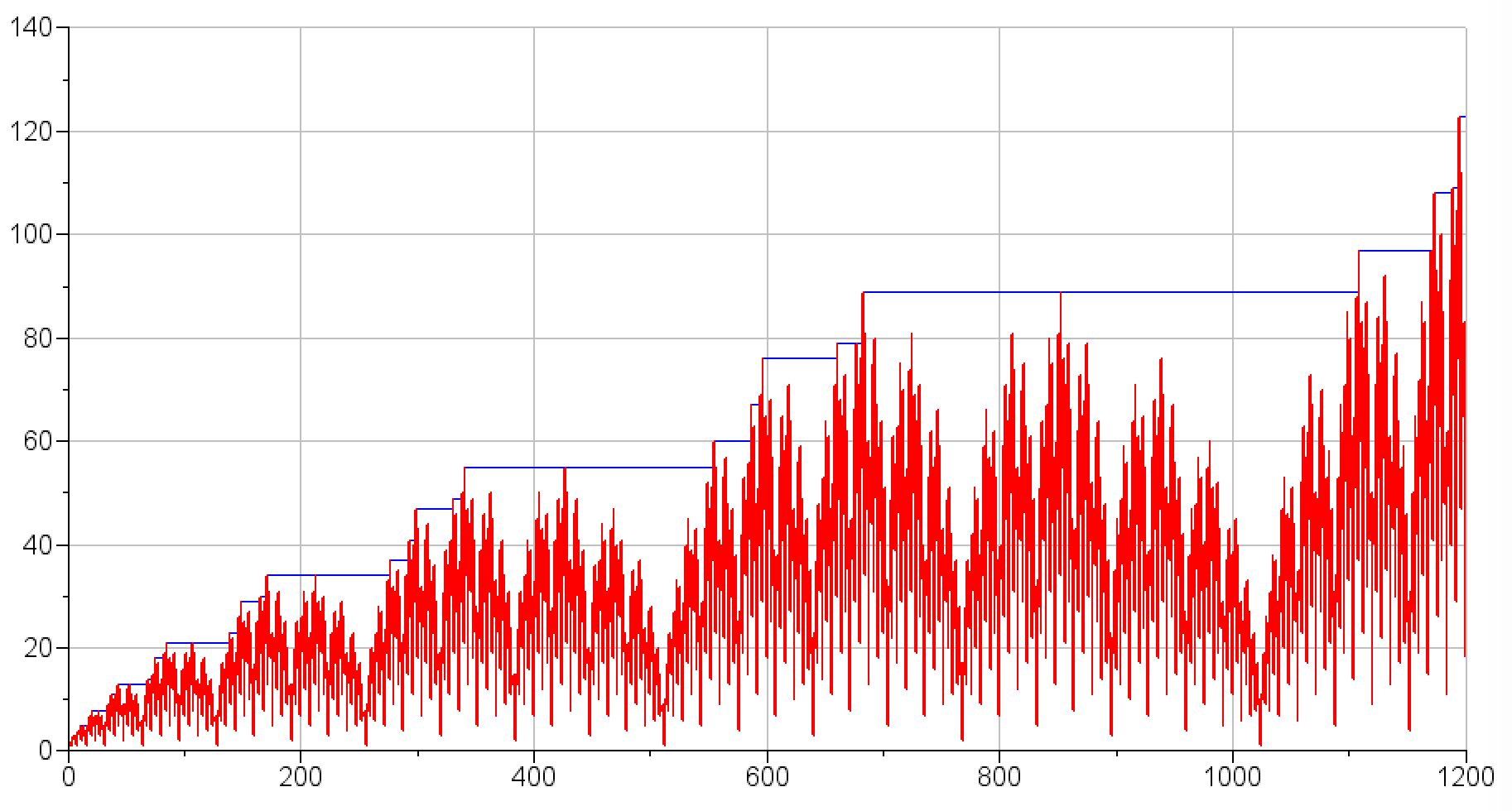}
	\end{center}
	\caption{Stern's sequence and its running maximum for $0\leq n \leq 1200$.}
	\label{fig1}
	\end{figure}
	For this reason, several authors have been interested in understanding the
	local maxima of $(a(n))_{n \geq 0}$.   
	This is easiest to determine when one restricts one's attention to numbers with $i$ bits; that is, to the interval $[2^{i-1}, 2^{i})$.  Lucas \cite{Lucas:1878} observed without proof
	that $\max_{2^{i-1} \leq n < 2^i} a(n) = F_{i+1}$, where
	$F_n$ is the $n$th Fibonacci number, defined as usual by
	$F_0 = 0$, $F_1 = 1$, and $F_n = F_{n-1} + F_{n-2}$ for $n \geq 2$, and proofs
	were later supplied by Lehmer \cite{Lehmer:1929} and Lind \cite{Lind:1969}.
	The second- and third-largest values in the same interval,
	$[2^{i-1}, 2^{i})$, were determined by Lansing \cite{Lansing:2014}, and
	more general results for these intervals were obtained by Paulin \cite{Paulin:2017}.
	
	On the other hand,
	Coons and Tyler \cite{Coons&Tyler:2014} showed that
	$$ \limsup_{n \rightarrow \infty} \frac{a(n)}{n^{\log_2 \varphi}} = 
	    \frac{\varphi^{\log_2 3}}{\sqrt{5}},$$
	  where $\varphi = (1+\sqrt{5})/2$ is the golden ratio.  This gives
	  the maximum order of growth of Stern's sequence.  Later, Defant \cite{Defant:2016} generalized their result to the analogue of Stern's sequence in all integer bases $b \geq 2$.

	In this paper, we are concerned with the positions of the ``running maxima'' or
	``record-setters'' of the Stern sequence overall, not restricted to
	subintervals of the form $[2^{i-1}, 2^i)$.  These are the indices $v$
	such that $a(j) < a(v)$ for all $j < v$.   The first few record-setters
	and their values are given in Table~\ref{tab2}.
		\begin{table}[H]
	\begin{center}
	\begin{tabular}{c|cccccccccccccccccc}
		$i$ 		& 0 & 1 & 2 & 3 & 4 & 5 & 6 & 7 & 8 & 9 & 10 & 11 & 12 & 13 & 14 & 15 & 16 & 17 \\
		\hline
				$v_i$ 	& 0 & 1 & 3 & 5 & 9 & 11 & 19 & 21 & 35 & 37 & 43 & 69&  73 & 75 & 83 & 85 & 139 & 147  \\
				$a(v_i)$ & 0 & 1 & 2 & 3 & 4 & 5  & 7 & 8 & 9 & 11 &  13 & 14 & 15 &  18 & 19 & 21 & 23  &26
\end{tabular}
\end{center}
\caption{First few record-setters for the Stern sequence.}
\label{tab2}
	\end{table}
The sequence of record-setters $(v_i)_{i \geq 1}$ is sequence \seqnum{A212288} in the OEIS,
and the sequence $(a(v_i))_{i \geq 1}$ is sequence \seqnum{A212289} in the OEIS.
In this paper, we provide a complete description of the record-setters for
the Stern sequence.   

To state the theorem, we need to use a standard notation for
repetitions of strings:  for a string $x$, the expression
$x^i$ means $\overbrace{xx\cdots x}^i$.   Thus, there is a possibility for confusion between ordinary powers of integers
and powers of strings, but hopefully the context will make our meaning clear.
	
\begin{theorem} \label{mainTheorem}
	The $k$-bit record-setters, for $k < 12$, are
	given in Table~\ref{tab3}.
	
	For $k \geq 12$,
		the $k$-bit record-setters of the Stern sequence, listed 
		in increasing order, have the following representation in base $2$:
    \begin{itemize}
        \item $k$ even, $k = 2n$:
            $$\begin{cases}
				100\, (10)^a\, 0\, (10)^{n-3-a}\, 11,  & \text{ for } 0 \leq a \leq n-3; \\
				 (10)^{b}\, 0\, (10)^{n-b-1} \, 1, & \text{ for } 1 \leq b \leq \lfloor n/2 \rfloor; \\
				(10)^{n-1}\, 11.
            \end{cases}$$

        \item $k$ odd, $k=2n+1$:
$$
            \begin{cases}
                10 00\, (10)^{n-2}\, 1 ; \\
                100100\, (10)^{n-4}\, 011; \\
                100\, (10)^b\, 0\, (10)^{n-2-b} \, 1, & \text{ for } 1 \leq b \leq \lceil n/2 \rceil - 1; \\
				(10)^{a+1}\,
				0\, (10)^{n-2-a}\, 11, & \text{ for } 0 \leq a \leq n-2;\\
				(10)^{n}\, 1. 
				\end{cases}
$$
	\end{itemize}
	In particular, for $k \geq 12$, the number of $k$-bit record-setters
	is $\lfloor 3k/4 \rfloor - (-1)^k$.
	\end{theorem}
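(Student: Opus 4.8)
The plan is to translate everything into the matrix encoding of the Stern sequence. With $L=\left[\begin{smallmatrix}1&0\\1&1\end{smallmatrix}\right]$ and $R=\left[\begin{smallmatrix}1&1\\0&1\end{smallmatrix}\right]$, the defining recurrences say that $\left[\begin{smallmatrix}a(n)\\a(n+1)\end{smallmatrix}\right]\mapsto L\left[\begin{smallmatrix}a(n)\\a(n+1)\end{smallmatrix}\right]$ under $n\mapsto 2n$ and $\mapsto R\left[\begin{smallmatrix}a(n)\\a(n+1)\end{smallmatrix}\right]$ under $n\mapsto 2n+1$, so that for $n=(1\,b_{k-2}\cdots b_1b_0)_2$ one has $\left[\begin{smallmatrix}a(n)\\a(n+1)\end{smallmatrix}\right]=M_{b_0}M_{b_1}\cdots M_{b_{k-2}}\left[\begin{smallmatrix}1\\1\end{smallmatrix}\right]$ with $M_0=L$, $M_1=R$. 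Expanding this product expresses $a(n)$ as a continuant of the run-length encoding of the binary expansion of $n$; the symmetry of continuants then gives the reversal identity $a(n)=a(n^{R})$, where $n^{R}$ reverses the binary digits of $n$. I would first use this to reduce the theorem to a purely intra-block statement: since the running maximum of $(a(n))$ on $[1,2^{k-1})$ equals $\max_{2^{k-2}\le n<2^{k-1}}a(n)=F_k$ by Lucas's result, the $k$-bit record-setters are exactly the left-to-right maxima of $a$ on the block $[2^{k-1},2^{k})$ whose value exceeds $F_k$.

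The second step is to pin down the shape of such left-to-right maxima. Because the alternating word $(10)^{\ast}$ realizes the block maximum $F_{k+1}$ (an all-ones continuant is a Fibonacci number), and $F_k/F_{k+1}\to 1/\varphi$, the constraint $a(n)>F_k$ forces the value into the narrow top band $(F_k,F_{k+1}]$; via the continuant description, together with the explicit second- and third-largest values of Lansing and Paulin, this confines the binary word to be the alternating pattern $(10)^{\ast}$ perturbed by a bounded number of local ``defects'' (an inserted $0$, a trailing $11$, or a short prefix anomaly such as $100$). Using continuant exchange inequalities---for instance that replacing a run length $c$ by $c-1,1$, or sliding a defect toward the centre, moves $a$ in a controlled, monotone direction---I would decide, for each admissible defect position, whether the resulting word is a left-to-right maximum and whether its value still exceeds $F_k$. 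This bookkeeping is exactly what produces the listed families, parameterized by the defect position ($a$ or $b$); it splits by the parity of $k$ because the alternating backbone $(10)^{n}$ terminates differently for $k=2n$ and $k=2n+1$.

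The hard part will be the completeness and ordering claims rather than the verification that each listed word is a record-setter. For completeness one must show that every word outside the families is disqualified, i.e.\ either its Stern value drops to $F_k$ or below, or some strictly smaller integer in the same block already attains an equal or larger value, so that it is not a left-to-right maximum. I expect this to require a case analysis organized by the number and type of defects: words with two or more independent defects should be pushed out of the top band by a continuant estimate, while a single defect in a disallowed position should be dominated, via the reversal symmetry and a mirror/slide argument, by a smaller word of the same length. Simultaneously one must check that the three (resp.\ five) families, once their base-$2$ values are compared, really do interleave into a single increasing list with no repetitions; each listed word has exactly $k$ bits (a direct length count), and the families are separated by their leading blocks, so disjointness and the increasing order reduce to comparing a handful of prefixes in each overlap region.

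Granting the explicit enumeration, the final ``In particular'' count is immediate. For even $k=2n$ the three families have sizes $n-2$, $\lfloor n/2\rfloor$, and $1$, for a total of $n-1+\lfloor n/2\rfloor$; substituting $n=k/2$ and using $k/2+\lfloor k/4\rfloor=\lfloor 3k/4\rfloor$ (valid for even $k$) gives $\lfloor 3k/4\rfloor-1=\lfloor 3k/4\rfloor-(-1)^k$. For odd $k=2n+1$ the five families have sizes $1$, $1$, $\lceil n/2\rceil-1$, $n-1$, and $1$, for a total of $n+1+\lceil n/2\rceil$; since $n+\lceil n/2\rceil=\lfloor 3k/4\rfloor$ for $n=(k-1)/2$ (checked separately for $k\equiv 1$ and $k\equiv 3\pmod 4$), this equals $\lfloor 3k/4\rfloor+1=\lfloor 3k/4\rfloor-(-1)^k$. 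In both cases the disjointness established above guarantees that summing the family sizes counts each record-setter exactly once, which completes the proof.
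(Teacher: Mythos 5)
Your opening reduction is correct and is in fact equivalent to the paper's set-up: your matrices $L$ and $R$ are exactly the paper's $\mu(0)$ and $\mu(1)$, and using Lucas's result to recast the $k$-bit record-setters as the left-to-right maxima of $a$ on $[2^{k-1},2^k)$ with value exceeding $F_k$ is a legitimate (and correct) starting point. Your final arithmetic, deriving $\lfloor 3k/4\rfloor-(-1)^k$ from the sizes of the listed families, also checks out. The problem is everything in between. The sentence asserting that membership in the band $(F_k,F_{k+1}]$ ``confines the binary word to be the alternating pattern $(10)^{*}$ perturbed by a bounded number of local defects'' is precisely the content of the theorem, and you neither prove it nor cite anything that could prove it. The appeal to Lansing and Paulin cannot do this job: those results identify only the top two or three values in a block, whereas the band $(F_k,F_{k+1}]$ contains a number of record-setters growing linearly in $k$ (about $3k/4$ of them), together with many more indices that exceed $F_k$ but fail to be left-to-right maxima. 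Knowing the second- and third-largest values says nothing about this whole population.

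Concretely, what is missing is the entire case analysis that occupies Sections~\ref{search_space} and~\ref{limit1001000} of the paper: precise replacement inequalities (the paper's infix/suffix/prefix comparators, i.e., entrywise matrix dominations $\mu(t)\geq\mu(y)$ and their one-sided versions) showing that a record-setter cannot contain $11$ or $10000$, that $1000$ can occur only as a prefix (with the single exception $1001000$), and that at most two occurrences of $100$ (even length) or three (odd length) are possible; and then Fibonacci-product monotonicity lemmas (the analogues of Lemmas~\ref{oddFibZero} and~\ref{evenMult}) to order the surviving candidates within each family and establish which of them actually set records. Your proposal names the right kind of tools (``continuant exchange inequalities,'' ``mirror/slide arguments'') but explicitly defers their formulation and proof (``I expect this to require a case analysis\ldots''), and the bound on the number of admissible ``defects'' is never established. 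Since the completeness claim --- that \emph{no} word outside the listed families is a record-setter --- rests entirely on these unproved steps, the proposal as written is a plausible research plan following essentially the paper's own strategy, but it is not yet a proof.
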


	In this paper, we prove the correctness of the classification above by ruling out many cases and then trying to find the set of record-setters.

	Our approach is to interpret numbers as binary strings. In Section \ref{basics}, we will introduce and provide some basic lemmas regarding this approach.

	To find the set of record-setters, we exclude many candidates and prove they do not belong to the set of record-setters in Section \ref{search_space}.

	In Section \ref{limit1001000}, we rule out more candidates by using some calculations based on Fibonacci numbers.

Finally, in Sections \ref{final_even} and \ref{final_odd}, we finish the classification of record-setters and prove Theorem \ref{mainTheorem}.

{\small\begin{center}
		\begin{longtable}[htb]{c|r|r}
		$k$ & record-setters & numerical \\
			& with $k$ bits & values  \\
		\hline
		1 & 1 & 1 \\
		2 & 11 & 3 \\
		3 & 101 & 5 \\
		4 & 1001 & 9 \\
		  & 1011 & 11 \\
		5 &  10011  &  19 \\
		  &  10101  &  21 \\
		6 &  100011  &  35 \\
		  &  100101  &  37 \\
		  &  101011  &  43 \\
		7 &  1000101  &  69 \\
		  &  1001001  &  73 \\
		  &  1001011  &  75 \\
		  &  1010011  &  83 \\
		  &  1010101  &  85 \\
		8 &  10001011  &  139 \\
		  &  10010011  &  147 \\
		  &  10010101  &  149 \\
		  &  10100101  &  165 \\
		  &  10101011  &  171 \\
		9 &  100010101  &  277 \\
		  &  100100101  &  293 \\
		  &  100101011  &  299 \\
		  &  101001011  &  331 \\
		  &  101010011  &  339 \\
		  &  101010101  &  341 \\
		10 &  1000101011  &  555 \\
		   &  1001001011  &  587 \\
		   &  1001010011  &  595 \\
		   &  1001010101  &  597 \\
		   &  1010010101  &  661 \\
		   &  1010101011  &  683 \\
		11 &  10001010101  &  1109 \\
		   &  10010010101  &  1173 \\
		   &  10010100101  &  1189 \\
		   &  10010101011  &  1195 \\
		   &  10100101011  &  1323 \\
		   &  10101001011  &  1355 \\
		   &  10101010011  &  1363 \\
		   &  10101010101  &  1365 \\
		   \caption{$k$-bit record-setters for $k < 12$.}
		\label{tab3}
		\end{longtable}
	\end{center}
    }

	\section{Basics}\label{basics}
	We start off by defining a new sequence $(s(n))_{n \geq 0}$, which is the
	Stern sequence shifted by one: $s(n) = a(n + 1)$ for $n \geq 0$.  Henceforth we will be mainly concerned with $s$ instead of $a$.  Let $R$ be the set of record-setters
	for the sequence $(s(n))_{n \geq 0}$, so that
	$R = \{ v_i - 1 \, : \, i \geq 1 \}$.

	A {\it hyperbinary representation\/} of a positive integer $n$ is a summation of powers of $2$, using each power
	at most twice.
	The following theorem of Carlitz \cite{Carlitz:1964} provides another way of interpreting the quantity $s(n)$:
	\begin{theorem}
		The number of hyperbinary representations of $n$ is $s(n)$.
	\end{theorem}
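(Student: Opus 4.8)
The plan is to prove the statement by strong induction on $n$, by showing that the function $h(n)$ counting hyperbinary representations of $n$ obeys exactly the same recurrence and initial conditions as $s$. The first step is to rewrite the defining recurrence of $a$ in terms of the shifted sequence $s(n) = a(n+1)$. Using $a(2m) = a(m)$ and $a(2m+1) = a(m) + a(m+1)$, one obtains $s(2n+1) = a(2n+2) = a(n+1) = s(n)$ and $s(2n) = a(2n+1) = a(n) + a(n+1) = s(n-1) + s(n)$, both valid for $n \geq 1$, together with the base values $s(0) = s(1) = 1$. These are the identities I would aim to reproduce for $h$.

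The heart of the argument is a case analysis on the coefficient $c_0 \in \{0,1,2\}$ of $2^0$ in a hyperbinary representation $n = \sum_{i \geq 0} c_i 2^i$. Stripping off $c_0$ and dividing the remaining sum by $2$ sets up a bijection between the hyperbinary representations of $n$ having a fixed value of $c_0$ and the hyperbinary representations of $(n - c_0)/2$. When $n$ is odd, parity forces $c_0 = 1$ (the only odd element of $\{0,1,2\}$), so $h(2m+1) = h(m)$. When $n$ is even, parity forces $c_0 \in \{0,2\}$; the subcase $c_0 = 0$ corresponds to the representations of $m$ and the subcase $c_0 = 2$ to the representations of $m-1$, giving $h(2m) = h(m) + h(m-1)$. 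I would record the base values $h(0) = 1$ (the empty representation) and $h(1) = 1$ separately, since the even recurrence applies only for $m \geq 1$.

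With both the recurrences and the base cases matching those of $s$, the induction closes immediately: assuming $h(j) = s(j)$ for all $j < n$, the recurrence appropriate to the parity of $n$ expresses $h(n)$ in terms of strictly smaller values, which by hypothesis equal the corresponding values of $s$, and these combine to $s(n)$ via the identities above. The only point requiring care---the main, if minor, obstacle---is verifying that the ``remove the last coefficient and halve'' map is genuinely a bijection onto the smaller instances, and in particular that it preserves the at-most-twice constraint on every power of $2$; this is precisely what makes the two even subcases disjoint and exhaustive. Treating $n = 0$ and $n = 1$ as base cases rather than through the recurrence avoids the spurious term $h(-1)$.
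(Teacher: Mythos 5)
Your proof is correct, and it is worth noting that the paper itself offers no proof of this statement at all: it is quoted as a known theorem of Carlitz, with a citation standing in for the argument. So there is no internal proof to compare against; what you have supplied is the standard (and essentially the original) justification. Your two key steps are both sound: the recurrences $s(2n+1) = s(n)$ and $s(2n) = s(n) + s(n-1)$ for $n \geq 1$, with $s(0) = s(1) = 1$, follow directly from the definition of $a$ and the shift $s(n) = a(n+1)$; and the parity analysis on $c_0$ gives a genuine bijection, since the map $(c_0, c_1, c_2, \ldots) \mapsto (c_1, c_2, \ldots)$ and its inverse (prepending $c_0$) both preserve the constraint $c_i \leq 2$ coefficientwise, making the subcases $c_0 = 0$ and $c_0 = 2$ disjoint and exhaustive for even $n$. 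Your care with the base cases is also right: $h(1) = 1$ because $2 \cdot 2^0$ overshoots, matching $s(1) = a(2) = 1$, and handling $n \in \{0, 1\}$ separately avoids the undefined term $h(-1)$ in the even recurrence. The strong induction then closes exactly as you describe. One cosmetic remark: the identity $s(2n+1) = s(n)$ actually holds for all $n \geq 0$, not just $n \geq 1$; only the even-index identity needs $n \geq 1$, but restricting both does no harm since your base cases cover everything below $n = 2$.
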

	
	We now define some notation.  We frequently represent integers as strings of digits.
	If $ x = e_{t-1} e_{t-2} \cdots e_1 e_0$
	is a string of digits 0, 1, or 2, then $[x]_2$ denotes the
	integer $n = \sum_{0 \leq i < t} e_i 2^i$.  For example,
	\begin{equation*}
	    43 = [101011]_2 = [012211]_2 = [020211]_2 = [021011]_2 = [100211]_2.
	\label{example43}
	\end{equation*}
	By ``breaking the power $2^i$'' or the $(i + 1)$-th bit from the right-hand side, we mean writing $2^i$ as two copies of $2^{i - 1}$. For example, breaking the power $2^1$ into
	$2^0 + 2^0$ can be thought of as rewriting the string $10$ as $02$.

	Now we state two helpful but straightforward lemmas:
	
	
	\begin{lemma} \label{breakBits} Let string $x$ be the binary representation of $n \geq 0$, that is $(x)_2 = n$. All proper hyperbinary representations of $n$ can be reached from $x$, only by breaking powers $2^i$, for $0 < i <|x|$.
	\end{lemma}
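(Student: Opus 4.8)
The plan is to attach to every representation a family of partial sums and to show that breaking a power is nothing but a controlled, unit-size decrease of exactly one of these partial sums, with the binary representation $x$ sitting at the top. Write $t=|x|$ and record a length-$t$ representation as a digit string $g_{t-1}\cdots g_0$ with $g_i\in\{0,1,2\}$ (by a \emph{proper} hyperbinary representation of $n$ I mean one that uses no power $2^i$ with $i\ge t$, i.e.\ one that fits in these $t$ positions). For such a string $z$ and each $i$ set $S_i(z)=\sum_{k\ge i}g_k 2^k$. The first step is the elementary check that breaking the power $2^i$ — replacing one copy of $2^i$ by two copies of $2^{i-1}$ — decreases $S_i$ by exactly $2^i$ and leaves $S_j$ unchanged for every $j\ne i$; in particular the total $S_0=n$ is preserved, as it must be.

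Next I would establish that $x$ simultaneously maximizes all of these partial sums. Since $S_i(z)$ is a multiple of $2^i$ and $n-S_i(z)=\sum_{k<i}g_k2^k\equiv n\pmod{2^i}$ is a nonnegative integer, we get $S_i(z)\le n-(n\bmod 2^i)=2^i\lfloor n/2^i\rfloor=S_i(x)$. Hence for any target proper hyperbinary representation $y=f_{t-1}\cdots f_0$, the quantity $m_i:=(S_i(x)-S_i(y))/2^i$ is a nonnegative integer for each $i$ with $0<i<t$. Because breaking at $i$ touches only $S_i$ (lowering it by $2^i$) and nothing creates weight above position $t-1$, any sequence of breaks that performs exactly $m_i$ breaks at position $i$ must terminate at the string with $S_i=S_i(x)-m_i2^i=S_i(y)$ for all $i$, and a representation is determined by its partial sums; so such a sequence lands precisely on $y$.

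It then remains to exhibit a legal order, one in which each break acts on a position currently holding at least one copy of the power being broken. I would process positions from $i=t-1$ down to $i=1$, doing all $m_i$ breaks at position $i$ before descending. When position $i$ is reached, its digit equals $e_i+2m_{i+1}$ (where $e_i$ is the binary digit of $n$), and performing $m_i$ breaks legally requires $e_i+2m_{i+1}\ge m_i$. Expressing $e_i$, $m_i$, $m_{i+1}$ through the $S$-values collapses this inequality to $S_i(y)\ge S_{i+1}(y)$, which holds simply because $S_i(y)-S_{i+1}(y)=f_i2^i\ge 0$. Thus the top-down order is always legal, and it reaches $y$ from $x$ using only breaks at positions $0<i<t=|x|$, as claimed.

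The only real content, and the step I expect to be the main obstacle, is identifying the correct monotone invariant: recognizing that a break is exactly a unit decrease of a single $S_i$, and that $x$ is the common maximizer of all the $S_i$. Once that is in place, both reachability and the legality of the ordering follow from the single inequality $S_i(y)\ge S_{i+1}(y)$. One subtlety to flag is that intermediate configurations along the top-down order need not themselves be hyperbinary, since a digit can transiently reach $3$; if the intended reading of the statement insists that every intermediate configuration be hyperbinary, one instead argues that at each stage a break can be chosen at a position whose current digit strictly exceeds the target digit there, which again follows from $S_i(y)\ge S_{i+1}(y)$ by a short interchange argument.
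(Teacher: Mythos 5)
Your proof is correct, but it takes a genuinely different route from the paper's. You work forward from $x$: a break at position $i$ lowers the partial sum $S_i$ by exactly $2^i$ and leaves every other $S_j$ alone; the binary string $x$ maximizes each $S_i$ among all representations of $n$; hence the number of breaks at position $i$ in any sequence reaching $y$ is forced to equal $m_i=(S_i(x)-S_i(y))/2^i$, and your top-down schedule is legal because $e_i+2m_{i+1}-m_i=f_i\ge 0$. The paper instead works backward from the target $y$: it repeatedly replaces two copies of $2^i$ by one copy of $2^{i+1}$ (the inverse of breaking) at the leftmost digit $2$; this terminates at a $2$-free string of the same value, which must be $x$ by uniqueness of binary representation, and reversing the steps gives the desired chain of breaks. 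The paper's argument is shorter, and since it always carries at the leftmost $2$, the intermediate strings it produces (read in reverse, as breaks from $x$) never hold a digit exceeding $2$, so the transient-digit-$3$ subtlety you flag never arises for it; that subtlety is in any case harmless, because the lemma only asserts reachability by breaking, not that intermediate configurations stay proper. What your invariant buys in exchange is quantitative information the paper's proof does not give: the break count $m_i$ at each position is an invariant of the endpoint, independent of the order of breaks, which essentially re-derives Lemma \ref{breaktwice} as well (a short downward induction using $f_i\le 2$ and $m_0=0$ shows $m_i\le 1$), whereas the paper proves that lemma by a separate magnitude argument.
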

	\begin{proof}
	    To prove this, consider a hyperbinary representation string $y = c_{t-1} c_{t-2} \cdots c_1 c_0$ of $n$. We show that $y$ can be reached from $x$ using the following algorithm: Let $i$ be the position of $y$'s leftmost 2. In each round, change bits $c_i := c_i - 2$ and $c_{i+1} := c_{i+1} + 1$. By applying this algorithm, $i$ increases until the number of 2s decrease, while the value $[y]_2$ remains the  same. Since $i$ cannot exceed $t - 1$, eventually $y$ would have no 2s. Therefore, string $y$ becomes $x$. By reversing these steps, we can reach the initial value of $y$ from $x$, only by ``breaking" bits.
	\end{proof}

	\begin{lemma} \label{breaktwice} Let string $x$ be the binary representation of $n \geq 0$. In the process of reaching a hyperbinary representation from $x$, only by breaking bits, a bit cannot be broken twice.
	\end{lemma}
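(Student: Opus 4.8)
The plan is to track, for each position, how many times it gets broken over the entire course of the process, and to show this count never exceeds one. Write $x = x_{t-1} \cdots x_1 x_0$ with $x_j \in \{0,1\}$ and $t = |x|$, and let $b_j$ denote the total number of times the power $2^j$ is broken as we pass from $x$ to the final hyperbinary representation $y = c_{t-1} \cdots c_0$. Two boundary facts will be immediate: position $0$ can never be broken (breaking requires $i > 0$, as in Lemma~\ref{breakBits}), so $b_0 = 0$; and since breaking only ever pushes weight from position $i$ down to position $i-1$, no position above $t-1$ is ever created, so $b_j = 0$ for $j \geq t$.

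The first real step is a bookkeeping (conservation) identity. Each break at position $j$ decreases the digit there by $1$ and increases the digit at $j-1$ by $2$, and these contributions simply accumulate regardless of the order in which the breaks are performed. Hence the final digit at position $j$ satisfies
$$ c_j = x_j - b_j + 2 b_{j+1}. $$
I would justify this by observing that the net change at a fixed position depends only on the multiset of break operations applied, namely $-b_j$ from the breaks at $j$ and $+2b_{j+1}$ from the breaks at $j+1$.

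With this identity in hand, I would prove $b_j \leq 1$ for all $j$ by induction on $j$, running from the least significant bit upward and using only the fact that $y$ is a genuine hyperbinary representation, i.e.\ $c_j \leq 2$ for every $j$. The base case is $b_0 = 0$. For the inductive step, assuming $b_j \leq 1$, rearrange the identity to $2 b_{j+1} = c_j - x_j + b_j \leq 2 - 0 + 1 = 3$, whence $b_{j+1} \leq 1$ because $b_{j+1}$ is a nonnegative integer. This closes the induction and shows no bit is broken twice.

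The step I expect to be the crux is choosing the right direction for the induction: the naive bound coming from $c_j \geq 0$ alone is too weak (it only yields $b_j \leq 3$), and one must instead exploit the upper constraint $c_j \leq 2$ together with the impossibility of breaking position $0$, propagating the bound from the bottom up rather than from the top down. Once that is recognized the argument is short; the only remaining care is to state the conservation identity cleanly, stressing that it is independent of the order of operations and that the boundary terms $b_0$ and $b_t$ vanish.
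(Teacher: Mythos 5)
Your proof is correct, but it takes a genuinely different route from the paper's. The paper disposes of the lemma with a single magnitude comparison and no induction: if the bit in position $i$ were broken twice, a value of $2\cdot 2^i = 2^{i+1}$ would have to be absorbed by positions $0,\dots,i-1$, yet even with every one of those digits equal to $2$ (the most a hyperbinary string allows) they can carry only $2^i + 2^{i-1} + \cdots + 2 = 2^{i+1}-2 < 2^{i+1}$; since breaking only moves value downward, this bounds each position's break count by $1$ independently of all the others. Your argument instead never mentions numerical magnitudes: it rests on the order-independent bookkeeping identity $c_j = x_j - b_j + 2b_{j+1}$, the boundary fact $b_0 = 0$ (breaking is only defined for positions $i > 0$, as in Lemma~\ref{breakBits}), the hyperbinary constraint $c_j \le 2$, and integrality, with the bound $b_j \le 1$ propagated upward from the least significant bit by induction. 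Both are sound. The paper's argument buys brevity and locality --- each position is handled on its own, with no induction hypothesis to carry. Yours buys an explicit, reusable accounting of the whole process: the conservation identity shows the break counts are determined by the endpoints alone (hence independent of the order of operations), and your observation that the estimate must run bottom-up --- because the top-down bound coming from $c_j \ge 0$ degenerates --- is a real structural point that the paper's capacity argument simply never has to confront.
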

	\begin{proof}
		Since $2^i > 2^{i-1} + \cdots + 2^0$, and $[2(0)^i]_2$ $>$ $[(2)^{i-1}]_2$, the $(i+1)$-th bit from right cannot be broken twice.
	\end{proof}
	
	For simplicity, we define a new function, $G(x)$, and work with binary and hyperbinary representations henceforward.
	The argument of $G$ is a string $x$ containing only the digits $\{0,1,2, 3\}$, and its value is 
	the number of different hyperbinary representations reachable from $x$, only by the breaking mechanism we defined above. Thus, for example,
	Eq.~\eqref{example43} demonstrates that $G(101011) = 5$.
	Although the digit 3 cannot appear in a proper hyperbinary representation, we use it here to mean that the corresponding bit \textit{must} be broken. Also, from Lemma~\ref{breaktwice}, we know that the digit 4 cannot appear since it must be broken twice. We can conclude from Lemma \ref{breakBits}, for a \textit{binary} string $x$, we have $G(x) = s([x]_2)$. We define $G(\epsilon)= 1$.
	
	In what follows, all variables have the domain $\{ 0,1 \}^*$; if we have a need for the digits $2$ and $3$, we write them explicitly.
	
	We will later use the following lemma to get rid of 2s and 3s in our hyperbinary representations and get a representation using only $0$s and $1$s:
	\begin{lemma} \label{remove23} For a binary string $h$, the equalities
		\begin{itemize}
			\item[(a)] $G(2h) = G(1h)$, 
			\item[(b)] $G(30h) = G(1h)$, 
			\item[(c)] $G(3(1)^i0h) = G(1h)$,
			\item[(d)] $G(3(1)^i) = G(3) = 0$
			\end{itemize}
		hold.
	\end{lemma}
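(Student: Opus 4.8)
The plan is to reduce all four parts to a single \emph{top-bit principle}. Because breaking the power $2^j$ only decreases position $j$ and increases position $j-1$ --- a break never pushes value \emph{upward} --- one may always resolve the highest (leftmost) digit first and then count the completions of the lower positions independently. Concretely, write a string as $c\,d\,w$, where $c$ is the top digit, $d$ the next, and $w$ the remaining (possibly empty) suffix. If $1 \le c \le 2$, then in each reachable representation the top bit is either left alone (it is already a legal hyperbinary digit, one way, with $G(dw)$ completions below) or broken exactly once (turning $d$ into $d+2$, with $G((d+2)w)$ completions below), so
\[
G(c\,d\,w) \;=\; G(dw) + G\bigl((d+2)\,w\bigr).
\]
The right-hand side does \emph{not} depend on whether $c$ is $1$ or $2$. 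Writing $h = dw$ (and checking the trivial empty case $G(2)=G(1)=1$ separately), this is precisely part~(a): taking $c=2$ and $c=1$ yields $G(2h)=G(1h)$.

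For parts~(b)--(c) I would record the companion \emph{forced-break rule}. A leading $3$ is not a legal final digit, and by Lemma~\ref{breaktwice} it can be broken at most once (a second break would require value $\ge 4$); hence it must be broken exactly once, producing a frozen $2$ on top (which contributes a factor $1$ and cannot be re-broken or receive anything from above) and turning the next digit $d$ into $d+2$:
\[
G(3\,d\,w) \;=\; G\bigl((d+2)\,w\bigr).
\]
For~(b) we have $d=0$, so $G(30h)=G(2h)$, and part~(a) finishes it. For~(c), the digit after the leading $3$ is a $1$, and since $1+2=3$ the rule gives $G(3\,1\,w)=G(3\,w)$; iterating this strips all $i$ leading ones, so $G(3\,(1)^i\,0\,h)=G(30h)=G(2h)=G(1h)$ by~(b) and~(a). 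Part~(d) is the same cascade with no terminating $0$: repeatedly applying $G(3\,1\,w)=G(3\,w)$ collapses $G(3\,(1)^i)$ to $G(3)$, and $G(3)=0$ because the lone $3$ sits at position $0$, which can never be broken (breaks require index $0<i<|x|$ by Lemma~\ref{breakBits}), while $3$ is not a legal digit, leaving no reachable representation.

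The one genuine subtlety --- and the step I would be most careful about --- is pinning down the semantics of the digit $3$: one must argue that the forced break is applied \emph{exactly} once (not zero times, since $3$ is illegal, and not twice, by Lemma~\ref{breaktwice}), and that the resulting frozen $2$ drops out of the count cleanly so that the remaining string is genuinely $(d+2)w$ with an untouched top digit subject to the ordinary top-bit bookkeeping. Once this is settled, everything else is a short induction on the number of leading ones together with the immediate base cases $G(30h)$ and $G(3)$, plus routine checks of the empty-$h$ and position-$0$ edge cases.
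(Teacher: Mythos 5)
Your proof is correct and follows essentially the same route as the paper's: Lemma~\ref{breaktwice} forces the leftmost digit to be broken at most once (so its being $1$ or $2$ cannot affect the count, giving (a)), while an illegal leading $3$ must be broken exactly once, producing a frozen $2$ and cascading through the leading $1$s to give (b)--(d). Your explicit recursions $G(c\,d\,w)=G(dw)+G((d+2)w)$ and $G(3\,d\,w)=G((d+2)w)$ are just a more formalized packaging of the paper's case analysis, and your treatment of the edge cases ($G(3)=0$, the empty suffix) matches the paper's.
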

	\begin{proof}
	\leavevmode
	\begin{itemize}
	    \item[(a)]
		According to Lemma \ref{breaktwice}, we cannot break the leftmost bit twice. Therefore, the number of different hyperbinary representations we can reach from $2h$ and $1h$, i.e. their $G$-value, is the same.
		
		\item[(b)] Since 3 cannot appear in a hyperbinary representation, we must break it. This results in a new string
		$22h$. Due to Lemma \ref{breaktwice}, the first (leftmost) $2$ is useless,
		and we cannot break it again. Thus, $G(30h) = G(2h)
		= G(1h)$.
		
		\item[(c)] Since we have to break the 3 again, the string $3(1)^i0h$ becomes $23(1)^{i -1}0h$, and  $G(3(1)^i0h) = G(3(1)^{i -1}0h)$ . By continuing this we get $G(3(1)^i0h)
		= G(30h) = G(1h)$.
		
		\item[(d)] To calculate $3(1)^i$'s $G$-value, we must count the number of proper hyperbinary representations reachable from $3(1)^i$. The first 3 must be broken, and by breaking 3, we obtain another string of the same format, i.e., $3(1)^{i-1}$. By continuing this, we reach the string $3$, which cannot be broken any further and is not a valid hyperbinary string. Therefore $G(3(1)^i) = G(3) = 0$
		\end{itemize}
	\end{proof}


	We now define two transformations on string $h$, prime and double prime transformations.
	For a string $h$, we let $h'$ be the string resulting from adding two to its leftmost bit, and then applying Lemma~\ref{remove23} to remove the excessively created 2 or 3. Therefore, string $h'$ is either a {\it binary} string, or it is 3, which is not transformable as the case (d) in Lemma~\ref{remove23}. For example,
	\begin{itemize}
		\item[(a)] If $h = 0011$, then we get $2011$, and by applying Lemma~\ref{remove23}, we have $h' =1011$.
		\item[(b)] If $h = 1011$, then $h' = 111$.
		\item[(c)] If $h = \epsilon$, then $h$ has no leftmost bit, and $h'$ is undefined. Therefore, we set $\epsilon' = 3$ and $G(\epsilon') = 0$.
		\item[(d)] If $h = 1$, then $h' = 3$ and $G(h') = 0$.
	\end{itemize}
	We let $h''$ be the string resulting from removing all trailing zeroes and decreasing the rightmost bit by 1. For
	example,   
	\begin{itemize}
		\item[(a)] If $h = 100\ 100$, then $h'' = 1000$;
		\item[(b)] If $h = 1011$, then $h'' = 10\ 10$;
		\item[(c)] If $h = 3$, then $h'' = 2$;
		\item[(d)] If $h = 0^i$ for
		$i \geq 0$, then after removing trailing zeros, the string does not have a rightmost bit and is not in the transformation function's domain. Therefore, we set $G(h'') = 0$.
	\end{itemize}

	The reason behind defining prime and double prime of strings is to allow dividing a single string into two pieces and calculating the $G$ function for both pieces. This way, we can calculate $G$-values more easily. For example, $h'$ is useful when a bit with the value $2^{|h|}$ is broken, and $h''$ is useful when we want to break $2^0$ and pass it to another string on its right. Lemma~\ref{breaktwice} implies this usefulness as we cannot break a bit twice; thus, we can assume the two pieces are entirely separate after breaking a bit.

	\section{Ruling out Candidates for Record-Setters}\label{search_space}
	
	
	In this section, by using Lemmas \ref{breakBits} and \ref{remove23}, we try to decrease the search space as much as possible. A useful tool is linear algebra. We now define a certain matrix $\mu(x)$ for a binary string $x$. We set
	\begin{equation}
		\mu(x) = 
		\begin{bmatrix}
			G(x) & G(x'')\\
			G(x') & G((x')'')
		\end{bmatrix} .
	\end{equation}
	
	
	For example, when $|x|=1$, the values are 
	\begin{align*}
	    &G(1) = 1, && G(1'') = G(0) = 1,\\
	    &G(1') = G(3) = 0, && G( (1')'') = G(3'') = G(2) = G(1) = 1,\\
	    &G(0) = 1, && G(0'') = 0,\\
	    &G(0') = G(2) = 1, && G( (0')'') = G(2'') = G(1) = 1,
	\end{align*}
	and the corresponding matrices are
	\begin{equation*}
	    \mu(1) = 
		\begin{bmatrix}
			1 & 1\\
			0 & 1
		\end{bmatrix}
		\text{ and }
		\mu(0) = 
		\begin{bmatrix}
			1 & 0\\
			1 & 1
		\end{bmatrix}.
	\end{equation*}
	
	In the case where $x = \epsilon$, the values are
	\begin{align*}
	    &G(\epsilon) = 1, && G(\epsilon'') = 0,\\
	    &G(\epsilon') = G(3) = 0, && G( (\epsilon')'') = G(3'') = G(2) = G(1) = 1,\\
	\end{align*}
	and the matrix is
	\begin{equation*}
		\mu(\epsilon) = 
		\begin{bmatrix}
			1 & 0\\
			0 & 1
		\end{bmatrix},
	\end{equation*}
	the identity matrix.
	
	\begin{theorem} \label{matrix_linearization}
		For two binary strings $x$ and $y$, the equation
		\begin{equation}
		    \mu(xy) = \mu(x)\cdot\mu(y)
		\end{equation}
		holds.
	\end{theorem}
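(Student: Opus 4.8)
The plan is to read each matrix $\mu(w)$ as a \emph{transfer matrix} whose rows and columns are indexed by a boundary ``carry'' state in $\{0,1\}$, and then to recognize ordinary matrix multiplication as the operation of summing over the single carry state shared between $x$ and $y$ at their junction.

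First I would set up the interpretation. For a binary string $w$ and indices $p,q \in \{0,1\}$, let $N_{p,q}(w)$ denote the number of breaking-configurations of $w$ in which the top (leftmost) bit additionally receives $2p$ from a broken bit just above it, and in which the bottom (rightmost) bit is broken so as to pass one unit past the right end exactly when $q=1$. Using the stated meanings of the two transformations — $w'$ records the effect of a broken bit of value $2^{|w|}$ landing on the leftmost position (carry-in), and $w''$ records breaking $2^0$ and passing it to the right (carry-out), both reduced to binary $G$-values through Lemma~\ref{remove23} — I would check the four entries
$$N_{0,0}(w) = G(w), \quad N_{0,1}(w) = G(w''), \quad N_{1,0}(w) = G(w'), \quad N_{1,1}(w) = G((w')''),$$
so that $\mu(w)_{p,q} = N_{p,q}(w)$ with the row index $p$ governing the left boundary and the column index $q$ the right boundary. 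The cases $|w| \le 1$ and $w = \epsilon$ already computed in the text serve as base checks that this indexing is correct.

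The core step is the boundary decomposition. Fix a carry-in $p$ at the top of $x$ and a carry-out $q$ at the bottom of $y$, and look at the single position where the rightmost bit of $x$ meets the leftmost bit of $y$. Let $r \in \{0,1\}$ record whether a unit is broken across this boundary, i.e.\ whether the lowest bit of $x$ is broken into the top of $y$. Because breaking only moves value rightward (toward lower positions) and, by Lemma~\ref{breaktwice}, no bit is broken twice, the lowest bit of $x$ is broken at most once; hence $r$ is genuinely a single bit, and nothing ever flows from $y$ back into $x$. Conditioned on the value of $r$, the breaks confined to the positions of $x$ and those confined to the positions of $y$ are independent: the $x$-side is counted by $N_{p,r}(x)$ and the $y$-side by $N_{r,q}(y)$, the carry-out $r$ of $x$ being exactly the carry-in $r$ of $y$ since a broken $2^0$ of $x$ delivers $+2$ to the top of $y$. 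Summing over the two possibilities for the boundary state gives
$$\mu(xy)_{p,q} = N_{p,q}(xy) = \sum_{r \in \{0,1\}} N_{p,r}(x)\, N_{r,q}(y) = \sum_{r} \mu(x)_{p,r}\, \mu(y)_{r,q} = (\mu(x)\mu(y))_{p,q},$$
which is the claimed identity entrywise. For $p=q=0$ this unpacks to $G(xy) = G(x)G(y) + G(x'')G(y')$, and the remaining three entries unpack to the analogous identities.

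I expect the main obstacle to be making the decoupling fully rigorous rather than the algebra: one must argue that conditioning on the single boundary bit $r$ really does partition every breaking-sequence of $xy$ into an independent pair of breaking-sequences on $x$ and on $y$, with no double counting. This is exactly where Lemma~\ref{breaktwice} is essential, since it forces the boundary carry into $\{0,1\}$ and rules out any interaction beyond the one shared unit; it also underlies the claim that ``carry-out $=1$'' on the $x$-side is counted by $G(x'')$ (one unit leaving the right) while ``carry-in $=1$'' on the $y$-side is counted by $G(y')$ (two units arriving at the top). As a more computational fallback, if the combinatorial bookkeeping proves delicate, I would instead induct on $|y|$: it suffices to prove $\mu(xa) = \mu(x)\mu(a)$ for a single appended digit $a \in \{0,1\}$, after which each of the four scalar identities can be verified directly from the definitions of $w'$ and $w''$ together with the Stern recurrence $s(2n) = s(n-1)+s(n)$ and $s(2n+1) = s(n)$, avoiding any global independence argument.
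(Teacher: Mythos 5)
Your main route is correct but organized genuinely differently from the paper's proof. The paper inducts on the left factor: it verifies $\mu(1x)=\mu(1)\mu(x)$ and $\mu(0x)=\mu(0)\mu(x)$ entrywise, each entry following from a case split on whether the leading bit breaks (e.g.\ $G(1x)=G(x)+G(x')$, and $(1x)'=3x$ forces a break), and general concatenation then follows by induction. You instead give all four entries of $\mu(w)$ an operational meaning at once, as carry-conditioned counts $N_{p,q}(w)$, and get the product formula for arbitrary $x,y$ by summing over the single junction carry $r$, with Lemma~\ref{breaktwice} forcing $r\in\{0,1\}$ and the one-way flow that decouples the two sides. This transfer-matrix view proves all four entries simultaneously and explains \emph{why} matrix multiplication is the right bookkeeping; notably, the paper itself states your argument, for the top-left entry only, as a remark at the end of the proof of Lemma~\ref{string-division} (``this can also be explained in another way''), so you have promoted that remark to a full proof. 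What your route costs, as you acknowledge, is the identification $N_{0,1}(w)=G(w'')$ and $N_{1,1}(w)=G((w')'')$: when $w=u10^t$ has trailing zeros, passing a unit out the right end forces each of the $t+1$ lowest positions to break exactly once, leaving a frozen suffix and a free prefix counted by $G(u0)=G(w'')$; this cascade argument, plus the degenerate cases $w'=3$ and $w\in0^*$, is exactly the bookkeeping the paper's one-letter induction avoids. Your fallback induction on an appended digit, using $s(2n)=s(n-1)+s(n)$ and $s(2n+1)=s(n)$, is sound and is essentially the paper's own proof mirrored left-to-right.
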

	\begin{proof}
		To show this, we prove $\mu(1x) = \mu(1)\cdot\mu(x)$ and $\mu(0x) = \mu(0) \cdot \mu(x)$. The general case for $\mu(xy) = \mu(x)\cdot\mu(y)$ then follows by induction.
		
		We first prove the case for $1x$. Consider
		\begin{equation*}
			\mu(1)\cdot\mu(x) =
			\begin{bmatrix}
				1 & 1\\
				0 & 1
			\end{bmatrix} \cdot
			\begin{bmatrix}
				G(x) & G(x'')\\
				G(x') & G((x')'')
			\end{bmatrix}
			=
			\begin{bmatrix}
				G(x) + G(x') & G(x'') + G((x')'')\\
				G(x') & G((x')'')
			\end{bmatrix},
		\end{equation*}
		which must equal
		\begin{equation*}
			\mu(1x) = 
			\begin{bmatrix}
				G(1x) & G((1x)'')\\
				G((1x)') & G(((1x)')'')
			\end{bmatrix}.
		\end{equation*}

		We first prove $G(1x) = G(x) + G(x')$. Consider two cases where the first 1 either breaks or not. The number of hyperbinary representations where it does not break equals $G(x)$; if it breaks, then the rest of the string becomes $0x'$, which has $G(x')$ representations.

		To show $G((1x)'') = G(x'') + G((x')'')$, we use the same approach. The first one either breaks or not, resulting in two different strings, $x$ and $x'$. In both cases, we must apply the double prime transformation to break a $2^0$ in order to pass it to a string on the right side of $1x$.
		
		For the equality of the bottom row, the string $(1x)'$ is $3x$; thus, the 3 must be broken, and the rest of the string becomes $x'$. So $\mu(1x) = \mu(1)\cdot\mu(x)$ holds.

		The case of $0x$ can be shown using similar conclusions. Consider
		\begin{equation*}
			\mu(0)\cdot\mu(x) =
			\begin{bmatrix}
				1 & 0\\
				1 & 1
			\end{bmatrix} \cdot
			\begin{bmatrix}
				G(x) & G(x'')\\
				G(x') & G((x')'')
			\end{bmatrix}
			=
			\begin{bmatrix}
				G(x)  & G(x'') \\
				G(x) + G(x') & G(x'') + G((x')'')
			\end{bmatrix},
		\end{equation*}
		which must equal
		\begin{equation*}
			\mu(0x) = 
			\begin{bmatrix}
				G(0x) & G((0x)'')\\
				G((0x)') & G(((0x)')'')
			\end{bmatrix} = 
			\begin{bmatrix}
				G(x) & G(x'')\\
				G(2x) & G((2x)'')
			\end{bmatrix}
			= 
			\begin{bmatrix}
				G(x) & G(x'')\\
				G(1x) & G((1x)'')
			\end{bmatrix}.
		\end{equation*}

		We have already shown $G(1x) = G(x) + G(x')$ and $G((1x)'') = G(x'') + G((x')'')$. Therefore, the equation $\mu(0x) = \mu(0)\cdot\mu(x)$ holds, and the theorem is proved.
	\end{proof}

	This theorem also gives us a helpful tool
	to compute $G(x)$, $G(x'')$, $G(x')$, and $G((x')''$ as $\mu(x)$ is just a multiplication of $\mu(1)$s and $\mu(0)$s.
	\begin{lemma} \label{G_linearization}
		For a string $x$, the equation $G(x) = \VMAT \mu(x) \WMAT $ holds. This multiplication simply returns the top-left value of the $\mu(x)$ matrix. 
	\end{lemma}

	From Theorem \ref{matrix_linearization} and Lemma \ref{G_linearization} we deduce the following result.
	\begin{lemma} \label{string-division} For binary strings $x, y$, the equation 
	\begin{equation}
	    G(xy) = G(x)G(y) + G(x'')G(y')
	\end{equation}
	holds.
	\end{lemma}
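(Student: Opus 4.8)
The plan is to derive this identity directly from the two preceding results, since it amounts to reading off the top-left entry of a $2 \times 2$ matrix product. First I would invoke Lemma~\ref{G_linearization} applied to the concatenation $xy$, which gives $G(xy) = \VMAT \mu(xy) \WMAT$; that is, $G(xy)$ is the $(1,1)$-entry of $\mu(xy)$. Then by Theorem~\ref{matrix_linearization} I would replace $\mu(xy)$ by $\mu(x)\cdot\mu(y)$, so that $G(xy)$ equals the top-left entry of the product $\mu(x)\mu(y)$.

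Next I would substitute the defining forms
$$\mu(x) = \begin{bmatrix} G(x) & G(x'') \\ G(x') & G((x')'') \end{bmatrix}, \qquad \mu(y) = \begin{bmatrix} G(y) & G(y'') \\ G(y') & G((y')'') \end{bmatrix},$$
and carry out the multiplication. The top-left entry of the product is the dot product of the first row of $\mu(x)$ with the first column of $\mu(y)$, namely $G(x)G(y) + G(x'')G(y')$, which is exactly the claimed right-hand side. This completes the argument.

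I do not anticipate any real obstacle here: the only point to watch is that the matrix product is read in the correct order, so that the \emph{first row} of $\mu(x)$ is paired with the \emph{first column} of $\mu(y)$; this is guaranteed by the fact that Theorem~\ref{matrix_linearization} gives $\mu(xy) = \mu(x)\mu(y)$ and not the reverse. The genuine combinatorial content — that every hyperbinary representation of $[xy]_2$ either leaves the bit at the boundary between the $x$-block and the $y$-block unbroken (contributing $G(x)G(y)$) or breaks a power across that boundary (contributing $G(x'')G(y')$) — has already been absorbed into the proof of Theorem~\ref{matrix_linearization} via the entrywise interpretation of $\mu$, so nothing further needs to be argued at the level of strings.
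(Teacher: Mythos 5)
Your proposal is correct and matches the paper's own proof essentially step for step: both invoke Lemma~\ref{G_linearization} to write $G(xy) = \VMAT \mu(xy) \WMAT$, apply Theorem~\ref{matrix_linearization} to factor $\mu(xy) = \mu(x)\mu(y)$, and read off the top-left entry of the product as $G(x)G(y) + G(x'')G(y')$. The only difference is cosmetic: the paper additionally remarks on the combinatorial meaning (breaking or not breaking the bit at the boundary), which you correctly note is already encapsulated in Theorem~\ref{matrix_linearization}.
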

	\begin{proof}
	We have
		\begin{align*}
			G(xy) &= \VMAT\mu(xy)\WMAT = \VMAT\mu(x)\mu(y)\WMAT\\ 
			&= \VMAT
			\begin{bmatrix}
				G(x)G(y) + G(x'')G(y') & G(x)G(y'') + G(x'')G((y')'')\\
				G(x')G(y)+ G((x')'')G(y') & G(x')G(y'')  + G((x')'')G((y')'')
			\end{bmatrix}\WMAT \\
			&= G(x)G(y) + G(x'')G(y').
		\end{align*}
		This can also be explained in another way. If we do not break the rightmost bit of $x$, we can assume the two strings are separate and get $G(x)G(y)$ number of hyperbinary representations. In case we break it, then $G(x'')G(y')$ ways exist.
	\end{proof}

	In what follows, we always set  $v := \VMAT$ and $w := \WMAT$.

	Here we define three comparators that help us replace substrings (or contiguous subsequences) in order to obtain a new string without decreasing the string's $G$-value.

	\begin{definition}[Comparators]
		In this paper, when we state a matrix $M_1$ is greater than or equal to the matrix $M_0$, we mean each entry of $M_1 - M_0$ is non-negative (they both must share the same dimensions).
		\begin{itemize}
			\item The infix comparator: For two strings $y$ and $t$, the relation $ t \INFIX y$ holds if $\mu(t) \geq \mu(y)$ holds.
			\item The suffix comparator: For two strings $y$ and $t$, the relation $ t \SUFFIX y$ holds if $
			\mu(t)\cdot w
			\geq \mu(y)\cdot w$ holds.
			\item The prefix comparator: For two strings $y$ and $t$, the relation $t \PREFIX y$ holds if $
			v\cdot\mu(t)
			\geq v\cdot\mu(y)
			$ holds.
		\end{itemize}
	\end{definition}
	
	\begin{lemma} \label{gc_lemma}
	    If $t \INFIX y$, and $t$ represents a smaller string, no record-setter can contain $y$ as its substring.
	\end{lemma}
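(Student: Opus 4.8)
The plan is to argue by contradiction. Suppose some record-setter $n$ of the sequence $(s(n))_{n\ge 0}$ has a binary representation that contains $y$ as a substring; write this representation as $x_1 y x_2$ with $x_1, x_2 \in \{0,1\}^*$. I would then replace the displayed occurrence of $y$ by $t$, obtaining the index $n' = [x_1 t x_2]_2$, and show that $s(n') \ge s(n)$ while $n' < n$. This contradicts the defining property of a record-setter, namely that $s(j) < s(n)$ for every $j < n$, and hence shows that no such $n$ exists.

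For the value comparison, the idea is to push everything through the matrix encoding. By Lemma~\ref{G_linearization} and Theorem~\ref{matrix_linearization},
\[
s(n) = G(x_1 y x_2) = v\,\mu(x_1)\,\mu(y)\,\mu(x_2)\,w
\quad\text{and}\quad
s(n') = G(x_1 t x_2) = v\,\mu(x_1)\,\mu(t)\,\mu(x_2)\,w ,
\]
where I use that $G$ agrees with $s$ on binary strings and that leading zeros are harmless (indeed $v\,\mu(0) = v$). Subtracting, and setting $D := \mu(t) - \mu(y)$, gives $s(n') - s(n) = \big(v\,\mu(x_1)\big)\, D\, \big(\mu(x_2)\,w\big)$.

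The key observation is a monotonicity principle for these products. Since $t \INFIX y$ means precisely $\mu(t) \ge \mu(y)$ entrywise, the matrix $D$ has non-negative entries. Moreover $\mu(x_1)$ and $\mu(x_2)$ are products of the non-negative matrices $\mu(0)$ and $\mu(1)$, and $v, w$ have non-negative entries, so the row vector $v\,\mu(x_1)$ and the column vector $\mu(x_2)\,w$ are non-negative. A non-negative row vector times a non-negative matrix times a non-negative column vector is non-negative; therefore $s(n') - s(n) \ge 0$, i.e.\ $s(n') \ge s(n)$. This value half of the argument is essentially automatic once one records that every factor appearing in the product is non-negative.

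It remains to use the hypothesis that $t$ represents a smaller string to conclude $n' < n$, and this is the step that requires the most care. Substituting $t$ for $y$ in the fixed context $x_1 \,\square\, x_2$ changes the integer value by $[x_1]_2\,2^{|x_2|}\big(2^{|y|} - 2^{|t|}\big) + \big([y]_2 - [t]_2\big)2^{|x_2|}$, and the main obstacle is checking that this quantity is strictly positive in every case allowed by the hypothesis. When $|t| = |y|$ this reduces cleanly to $[t]_2 < [y]_2$; the remaining situation, where $t$ is genuinely shorter (so that leading-zero effects could in principle make $x_1 t x_2$ and $x_1 y x_2$ represent the same integer), must be ruled out to be sure the replacement strictly lowers the index. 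Once $n' < n$ is secured, combining it with $s(n') \ge s(n)$ contradicts $s(n') < s(n)$ and completes the proof.
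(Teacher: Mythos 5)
Your proof is correct and takes essentially the same approach as the paper's own, which is just a terser version of it: factor $G$ through the matrices via Theorem~\ref{matrix_linearization} and Lemma~\ref{G_linearization}, use entrywise non-negativity of every factor to conclude the $G$-value does not decrease under the replacement, and observe that the replacement yields a smaller index (the paper is in fact less careful than you are, asserting ``all entries positive'' and writing $(xtz)_2 \leq (xyz)_2$ where strictness is needed). The one loose end you leave---that a strictly shorter $t$ might leave the integer value unchanged---is vacuous: since ``$t$ smaller'' means $[t]_2 < [y]_2$, the second term of your own difference formula $[x_1]_2\,2^{|x_2|}\bigl(2^{|y|} - 2^{|t|}\bigr) + \bigl([y]_2 - [t]_2\bigr)2^{|x_2|}$ is strictly positive and the first is non-negative, so $n' < n$ holds regardless of whether $|t| < |y|$.
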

	\begin{proof}
		Consider a string $a = xyz$. According to Lemma \ref{G_linearization}, we have
	    \begin{equation*}
			G(a) = v \cdot \mu(x) \cdot \mu(y) \cdot \mu(z) \cdot w.
	    \end{equation*}
		Since $ \mu(t) \geq \mu(y)$, and all entries in the matrices are positive, the replacement of $y$ with $t$ does not decrease $G(a)$, and also yields a smaller number, that is $(xtz)_2 \leq (xyz)_2$. Therefore, $(xyz)_2 \notin R$.
	\end{proof}
	
	As an example, consider the two strings $111$ and $101$. Then $101 \INFIX 111$ holds, since
	\begin{equation*}
		\mu(101) =
		\begin{bmatrix}
			2 & 3\\
			1 & 2
		\end{bmatrix} \geq
		\mu(111) = 
		\begin{bmatrix}
			1 & 3\\
			0 & 1
		\end{bmatrix} .
	\end{equation*}

	\begin{lemma} \label{endLemma}
	    If $t < y$ and $t \SUFFIX y$, then $y$ is not a suffix of a record-setter.
	\end{lemma}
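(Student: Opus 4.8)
The plan is to mirror the argument of Lemma~\ref{gc_lemma}, but using a suffix replacement in place of an infix one, and to argue by contradiction. Suppose $y$ were a suffix of some record-setter; then that record-setter has binary representation $xy$ for some (possibly empty) binary string $x$, and I would form the string $xt$ obtained by replacing the suffix $y$ with $t$. My goal is to show that $xt$ represents a strictly smaller integer whose $s$-value is at least as large, which contradicts the defining property of a record-setter of $(s(n))_{n\geq 0}$.

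First I would establish the inequality on $G$-values. By Theorem~\ref{matrix_linearization} and Lemma~\ref{G_linearization},
\[
G(xt) = \VMAT\,\mu(x)\,\mu(t)\,\WMAT = \bigl(\VMAT\,\mu(x)\bigr)\bigl(\mu(t)\,\WMAT\bigr),
\]
and likewise $G(xy) = \bigl(\VMAT\,\mu(x)\bigr)\bigl(\mu(y)\,\WMAT\bigr)$. The row vector $\VMAT\,\mu(x) = \begin{bmatrix} G(x) & G(x'') \end{bmatrix}$ has non-negative entries, since each entry counts hyperbinary representations. Because $t \SUFFIX y$ means exactly that $\mu(t)\,\WMAT \geq \mu(y)\,\WMAT$ entrywise, multiplying on the left by the non-negative vector $\VMAT\,\mu(x)$ preserves the inequality and gives $G(xt) \geq G(xy)$. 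Recalling that $G(z) = s([z]_2)$ for a binary string $z$, this says $s([xt]_2) \geq s([xy]_2)$.

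Next I would verify the numerical inequality $[xt]_2 < [xy]_2$. Writing $[xy]_2 = [x]_2 \cdot 2^{|y|} + [y]_2$ and $[xt]_2 = [x]_2 \cdot 2^{|t|} + [t]_2$, their difference is $[x]_2\bigl(2^{|y|} - 2^{|t|}\bigr) + \bigl([y]_2 - [t]_2\bigr)$; since $t < y$ gives $[t]_2 < [y]_2$ and (in the intended application) $|t| \leq |y|$, both summands are non-negative and the second is strictly positive, so $[xt]_2 < [xy]_2$. Combining the two steps, $[xt]_2$ is a strictly smaller index with $s([xt]_2) \geq s([xy]_2)$, so $[xy]_2$ violates the requirement that every smaller index have a strictly smaller $s$-value; hence $[xy]_2$ is not a record-setter, and $y$ is not a suffix of any record-setter.

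I expect the only genuine subtlety to be the length bookkeeping in the numerical comparison: one must be sure that replacing $y$ by the numerically smaller $t$ truly decreases the whole number $[xy]_2$, which requires controlling $|t|$ against $|y|$, and one should separately dispatch the boundary case $x = \epsilon$, where $\mu(\epsilon)$ is the identity matrix and the argument collapses to $G(t) \geq G(y)$ together with $[t]_2 < [y]_2$. Everything else follows directly from the multiplicativity of $\mu$ and the non-negativity of its entries.
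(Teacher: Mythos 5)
Your proof is correct and follows essentially the same route as the paper's: replace the suffix $y$ by $t$, use $\mu(t)\,\WMAT \geq \mu(y)\,\WMAT$ together with the non-negativity of $\VMAT\mu(x)$ to conclude $G(xt)\geq G(xy)$, and observe that the resulting index is strictly smaller. The paper's own proof is just a terser version of this argument; your added care about the length comparison $|t|\leq|y|$ and the case $x=\epsilon$ fills in details the paper leaves implicit.
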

	\begin{proof}
		Consider a string $a = xy$. We have shown $G(a) = v \cdot \mu(x) \cdot \mu(y) \cdot w$. By
		replacing $y$ with $t$, since $\mu(t) \cdot w \geq \mu(y) \cdot w$, the value $G(a)$ does not decrease, and we obtain a smaller string.
	\end{proof}
	
	\begin{lemma} \label{beginLemma}
	    If  $t < y$ and $t \PREFIX x$, then $x$ is not a prefix of a record-setter.
	\end{lemma}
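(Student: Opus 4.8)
The plan is to mirror the proof of Lemma~\ref{endLemma}, interchanging the two ends of the string: where the suffix argument multiplied by $w$ on the right, here I act by $v$ on the left. Let $a = xz$ be any binary string having $x$ as a prefix, where $z \in \{0,1\}^*$. Combining Theorem~\ref{matrix_linearization} with Lemma~\ref{G_linearization} gives
\begin{equation*}
    G(a) = v \cdot \mu(x) \cdot \mu(z) \cdot w ,
\end{equation*}
and the idea is to replace the prefix $x$ by $t$, forming the competitor $tz$.

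The key step is that the hypothesis $t \PREFIX x$ is precisely the entrywise inequality $v\cdot \mu(t) \geq v\cdot\mu(x)$ between row vectors. Since the column vector $\mu(z)\cdot w$ has non-negative entries (its two coordinates are $G(z)$ and $G(z')$, counts of hyperbinary representations), right-multiplication preserves the inequality, whence
\begin{equation*}
    G(tz) = v\cdot\mu(t)\cdot\mu(z)\cdot w \;\geq\; v\cdot\mu(x)\cdot\mu(z)\cdot w = G(xz) = G(a).
\end{equation*}
Thus replacing the prefix $x$ by the smaller string $t$ does not decrease the $s$-value.

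It then remains to check that $tz$ names a strictly smaller integer than $a = xz$; once this is done, $[tz]_2$ is a strictly smaller index at which $s$ is at least as large, so $a$ fails the record-setter property, and $x$ cannot be a prefix of a record-setter. This numerical comparison is the one point I expect to require genuine care, and it is the real asymmetry with the suffix case, since here the edit sits at the most significant end. I would invoke the hypothesis $t < x$ directly: when $|t| = |x|$ the inequality $[tz]_2 < [xz]_2$ is immediate, and when $|t| < |x|$ it persists because the suppressed high-order bits dominate the common tail $z$; any leading zeros contributed by $t$ are harmless, since $[0h]_2 = [h]_2$ and $G(0h) = G(h)$.

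Since the entire argument is the left/right mirror of Lemma~\ref{endLemma}, I anticipate no essential obstacle beyond this bookkeeping. The identity $G(xz) = v\,\mu(x)\,\mu(z)\,w$ does all the combinatorial work, and the prefix comparator is defined exactly so that dominance of $v\,\mu(t)$ over $v\,\mu(x)$ transfers to dominance of the whole product.
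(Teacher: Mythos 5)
Your proof is correct and is exactly the argument the paper intends: the paper actually states Lemma~\ref{beginLemma} without proof, leaving it as the evident left/right mirror of Lemma~\ref{endLemma}, which is precisely what you wrote (including the only point needing care, that $t < x$ with the common tail $z$ gives $[tz]_2 < [xz]_2$). Note only that the lemma's statement has a typo --- the hypothesis ``$t < y$'' should read ``$t < x$'' --- and you correctly read it that way.
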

	
	\begin{corollary} \label{lemma111}
	    For an $h \in R$, since $101 \INFIX 111$, then $h$ cannot contain $111$ as a substring.
	\end{corollary}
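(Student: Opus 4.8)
The plan is to recognize this statement as nothing more than a direct instantiation of Lemma~\ref{gc_lemma}, so the only work is to verify the lemma's two hypotheses for the specific choice $t = 101$ and $y = 111$; once both are checked, the conclusion is immediate.

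First I would invoke the $2 \times 2$ matrix comparison displayed immediately before the statement, which already establishes
$$
\mu(101) =
\begin{bmatrix}
2 & 3\\
1 & 2
\end{bmatrix}
\geq
\begin{bmatrix}
1 & 3\\
0 & 1
\end{bmatrix}
= \mu(111)
$$
entrywise, and hence that $101 \INFIX 111$. This discharges the infix-comparator hypothesis of Lemma~\ref{gc_lemma}. Next I would verify the ``smaller string'' hypothesis: since $[101]_2 = 5 < 7 = [111]_2$ and the two strings have the same length $|101| = |111| = 3$, replacing an occurrence of $111$ by $101$ inside any binary string strictly decreases the resulting integer, so $101$ indeed represents the smaller string in the sense required by the lemma.

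With both hypotheses in hand, Lemma~\ref{gc_lemma} applies verbatim with $t = 101$, $y = 111$: for any string $x\,111\,z$, substituting $101$ for the factor $111$ leaves the $G$-value non-decreasing while yielding a strictly smaller number, whence $x\,111\,z \notin R$. Therefore no $h \in R$ can contain $111$ as a substring. I expect no genuine obstacle here, since all the content is already packaged in Lemma~\ref{gc_lemma} together with the single entrywise matrix inequality above; the corollary merely serves as the first concrete application of the comparator machinery, and the only point worth stating explicitly is the equal-length observation that guarantees the substitution $111 \mapsto 101$ strictly decreases the value.
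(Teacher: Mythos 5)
Your proposal is correct and follows exactly the paper's route: the corollary is a direct application of Lemma~\ref{gc_lemma} to the entrywise inequality $\mu(101) \geq \mu(111)$, which the paper displays immediately before the statement. Your explicit remark that the equal lengths $|101|=|111|$ guarantee the substitution strictly decreases the numerical value is a worthwhile (if minor) clarification that the paper leaves implicit.
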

	We have established that a record-setter $h$ cannot contain three consecutive 1s. Now, we plan to prove $h$ cannot have two consecutive 1s, either. We  do this in the following lemmas and theorems.

	The following theorem provides examples that their $G$-values equal Fibonacci numbers.
	
	\begin{theorem} \label{fibonacci-vals} For $i \geq 0$, the equations
		\begin{align}
			G((10)^i) &= F_{2i+1},\label{Fib1st} \\
			G((10)^i0) &= F_{2i + 2},\label{Fib2nd}\\
			G(1(10)^i) &= F_{2i + 2}, \text{ and}\label{Fib3rd} \\
			G(1(10)^i0) &= F_{2i + 3}\label{Fib4th}
		\end{align}
		 hold.
	\end{theorem}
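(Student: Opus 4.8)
The plan is to exploit the matrix formalism of Theorem~\ref{matrix_linearization} together with Lemma~\ref{G_linearization}, which jointly reduce every $G$-value to a product of the two fixed matrices $\mu(0)$ and $\mu(1)$, sandwiched between the row vector $v$ and the column vector $w$. The first step is to compute
\[
\mu(10) = \mu(1)\mu(0) = \begin{bmatrix} 1 & 1 \\ 0 & 1 \end{bmatrix}\begin{bmatrix} 1 & 0 \\ 1 & 1 \end{bmatrix} = \begin{bmatrix} 2 & 1 \\ 1 & 1 \end{bmatrix},
\]
and to observe that this is exactly the square of the Fibonacci matrix $Q = \begin{bmatrix} 1 & 1 \\ 1 & 0 \end{bmatrix}$. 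Hence $\mu((10)^i) = \mu(10)^i = Q^{2i}$, and invoking the standard closed form $Q^n = \begin{bmatrix} F_{n+1} & F_n \\ F_n & F_{n-1} \end{bmatrix}$ yields
\[
\mu((10)^i) = \begin{bmatrix} F_{2i+1} & F_{2i} \\ F_{2i} & F_{2i-1} \end{bmatrix}.
\]

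With this single formula in hand, each of the four identities falls out by reading off the appropriate entry or combination of entries. For \eqref{Fib1st} I would simply take $G((10)^i) = v\,\mu((10)^i)\,w$, the top-left entry, which is $F_{2i+1}$. For the remaining three, the effect of the extra leading $1$ or trailing $0$ is captured by pre- or post-multiplying by the corresponding fixed vector: since $v\,\mu(1) = \begin{bmatrix} 1 & 1 \end{bmatrix}$ and $\mu(0)\,w = \begin{bmatrix} 1 \\ 1 \end{bmatrix}$, identities \eqref{Fib2nd} and \eqref{Fib3rd} each reduce to a sum of consecutive Fibonacci numbers, $F_{2i+1}+F_{2i} = F_{2i+2}$, while \eqref{Fib4th} reduces to $F_{2i+2}+F_{2i+1} = F_{2i+3}$ after one further application of the Fibonacci recurrence. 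Thus all four equations follow from a single matrix power, with no separate induction needed.

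The only genuine care required is at the boundary. For $i = 0$ one has $(10)^0 = \epsilon$ and $\mu(\epsilon)$ equal to the identity (as already computed in the excerpt), so the closed form for $Q^{2i}$ must be read with $Q^0 = I$; matching this against $\begin{bmatrix} F_1 & F_0 \\ F_0 & F_{-1} \end{bmatrix}$ forces the convention $F_{-1} = 1$, and one should confirm the four claimed values $F_1, F_2, F_2, F_3$ directly against $G(\epsilon)=1$, $G(0)=1$, $G(1)=1$, and $G(10)=2$. An equally valid alternative would be a simultaneous induction on $i$ using Lemma~\ref{string-division}, treating all four identities at once because the Fibonacci recurrence couples them; but this merely unfolds the matrix product by hand, so the matrix route is cleaner. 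I do not expect any real obstacle: once $\mu(10) = Q^2$ is recognized, everything is bookkeeping, and the main point to get right is the alignment of the Fibonacci indices with the exponent $2i$.
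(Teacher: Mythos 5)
Your proposal is correct and follows essentially the same route as the paper: both reduce the problem, via Theorem~\ref{matrix_linearization} and Lemma~\ref{G_linearization}, to computing $\mu((10)^i)$ as the Fibonacci matrix $\begin{bmatrix} F_{2i+1} & F_{2i} \\ F_{2i} & F_{2i-1} \end{bmatrix}$ and then reading off the required entries after multiplying by $\mu(1)$ and $\mu(0)$. Your two refinements---invoking the standard $Q$-matrix closed form in place of the paper's explicit induction, and contracting with $v\mu(1)$ and $\mu(0)w$ instead of computing full matrices---are sound cosmetic streamlining, and your handling of the $i=0$ boundary (which the paper's induction, starting at $i=1$, leaves implicit) is a small but genuine improvement.
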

	
	\begin{proof}
		We first prove that the following equation holds:
		\begin{equation}
			\mu((10)^i) = \begin{bmatrix}
				F_{2i + 1} & F_{2i}\\
				F_{2i} & F_{2i - 1}
			\end{bmatrix} .
		\label{mat10}
		\end{equation}
		The case for $i = 1$, namely $\mu(10) = \begin{bmatrix}
			2 & 1\\
			1 & 1
		\end{bmatrix}$, holds. We now use induction:
		\begin{equation*}
			\mu((10)^{i + 1}) = \mu((10)^i) \mu(10) = 
			\begin{bmatrix}
				F_{2i + 1} & F_{2i}\\
				F_{2i} & F_{2i - 1}
			\end{bmatrix}
			\begin{bmatrix}
				2 & 1\\
				1 & 1
			\end{bmatrix} =
			\begin{bmatrix}
				F_{2i + 3} & F_{2i + 2}\\
				F_{2i + 2} & F_{2i + 1}
			\end{bmatrix}, 
		\end{equation*}
		and thus we can conclude \eqref{Fib1st}.

		For the other equations \eqref{Fib2nd}, \eqref{Fib3rd}, and \eqref{Fib4th}, we proceed similarly:
		\begin{align*}
			\mu((10)^i0) = \mu((10)^i)\mu(0) = 
			\begin{bmatrix}
				F_{2i + 1} & F_{2i}\\
				F_{2i} & F_{2i - 1}
			\end{bmatrix}
			\begin{bmatrix}
				1 & 0\\
				1 & 1
			\end{bmatrix} =
			\begin{bmatrix}
				F_{2i + 2} & F_{2i}\\
				F_{2i + 1} & F_{2i - 1}
			\end{bmatrix};\\
			\mu(1(10)^i) = \mu(1)\mu((10)^i) = 
			\begin{bmatrix}
				1 & 1\\
				0 & 1
			\end{bmatrix}
			\begin{bmatrix}
				F_{2i + 1} & F_{2i}\\
				F_{2i} & F_{2i - 1}
			\end{bmatrix}
			 =
			\begin{bmatrix}
				F_{2i + 2} & F_{2i + 1}\\
				F_{2i} & F_{2i - 1}
			\end{bmatrix};\\
			\mu(1(10)^i0) = \mu(1)\mu((10)^i)\mu(0) = 
			\begin{bmatrix}
				F_{2i + 2} & F_{2i + 1}\\
				F_{2i} & F_{2i - 1}
			\end{bmatrix}
			\begin{bmatrix}
				1 & 0\\
				1 & 1
			\end{bmatrix} =
			\begin{bmatrix}
				F_{2i + 3} & F_{2i + 1}\\
				F_{2i + 1} & F_{2i - 1}
			\end{bmatrix} .
		\end{align*}
		
		Multiplying these by $v$ and $w$ as in Lemma \ref{G_linearization} confirms the equalities \eqref{Fib1st}--\eqref{Fib4th}.
\end{proof}
	
	\begin{lemma} \label{lemma1100} If $h \in R$, then $h$ cannot contain a substring of the form $1(10)^{i}0$ for $i>0$.
	\end{lemma}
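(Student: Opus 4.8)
The plan is to apply the infix comparator of Lemma~\ref{gc_lemma}: for each $i>0$ I will exhibit a string $t$ of the same length as $y:=1(10)^i0$ that represents a strictly smaller integer and satisfies $\mu(t)\geq\mu(y)$ entrywise. Then $t\INFIX y$, and Lemma~\ref{gc_lemma} immediately forbids $y$ from occurring as a substring of any record-setter. The natural candidate is $t=(10)^{i+1}$, which has the same length $2i+2$ as $y$ (note that $y$ begins with the block $11$, so replacing it by the alternating pattern is exactly the kind of local improvement we expect to reduce the value while not decreasing $G$).

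The argument then splits into a matrix comparison and a numerical comparison. For the matrices, Theorem~\ref{fibonacci-vals} already supplies
\[
\mu(1(10)^i0)=\begin{bmatrix} F_{2i+3} & F_{2i+1}\\ F_{2i+1} & F_{2i-1}\end{bmatrix},
\]
while specializing \eqref{mat10} to exponent $i+1$ gives
\[
\mu((10)^{i+1})=\begin{bmatrix} F_{2i+3} & F_{2i+2}\\ F_{2i+2} & F_{2i+1}\end{bmatrix}.
\]
Subtracting entrywise, the top-left entries agree, and the other three differences are $F_{2i+2}-F_{2i+1}=F_{2i}$, $F_{2i+2}-F_{2i+1}=F_{2i}$, and $F_{2i+1}-F_{2i-1}=F_{2i}$, each of which is nonnegative. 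Hence $\mu((10)^{i+1})\geq\mu(1(10)^i0)$, i.e.\ $(10)^{i+1}\INFIX 1(10)^i0$.

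For the numerical comparison, both $(10)^{i+1}$ and $1(10)^i0$ have length $2i+2$ and begin with $1$, but their second bits are $0$ and $1$ respectively, so $[(10)^{i+1}]_2<[1(10)^i0]_2$. Thus $t=(10)^{i+1}$ is the smaller string with $t\INFIX y$, and Lemma~\ref{gc_lemma} yields that no record-setter contains $1(10)^i0$. I do not anticipate a genuine obstacle: the only care required is the Fibonacci index bookkeeping, where all three nonzero entry differences collapse to the single identity $F_{n+1}-F_{n-1}=F_n$, and a sanity check of the base case $i=1$, where $y=1100$ and $t=1010$ give $\begin{bmatrix}5&3\\3&2\end{bmatrix}\geq\begin{bmatrix}5&2\\2&1\end{bmatrix}$, confirming the inequality holds from the smallest admissible value of $i$ onward.
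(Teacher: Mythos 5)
Your proof is correct and follows essentially the same route as the paper: the paper also takes $t=(10)^{i+1}$, invokes Theorem~\ref{fibonacci-vals} for the two matrices, and concludes $t \INFIX 1(10)^i0$ via Lemma~\ref{gc_lemma}. Your write-up merely makes explicit the entrywise Fibonacci differences and the numerical comparison that the paper leaves implicit.
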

	\begin{proof}
		To prove this we use Theorem \ref{fibonacci-vals} and the infix-comparator to show $t = (10)^{i+1} \INFIX y = 1(10)^{i}0$:
		\begin{equation*}
			\mu(t) = \begin{bmatrix}
				F_{2i + 3} & F_{2i + 2}\\
				F_{2i + 2} & F_{2i + 1}
			\end{bmatrix} \geq
			\mu(y) = \begin{bmatrix}
				F_{2i + 3} & F_{2i + 1}\\
				F_{2i + 1} & F_{2i - 1}
			\end{bmatrix} .
		\end{equation*}

		We conclude $t \INFIX y$ for $i \geq 1$. Because of this, a $00$ cannot appear to the right of a $11$, since if it did, it would contain a
		substring of the form $1(10)^i0$.
	\end{proof}
	
	\begin{lemma} \label{lemma110} If $h \in R$, then $h$ does not end in $1(10)^{i}$ for $i \geq 0$.
	\end{lemma}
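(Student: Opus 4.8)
The plan is to rule out the pattern by a single application of the suffix comparator, Lemma~\ref{endLemma}: to show that $y = 1(10)^i$ cannot occur as the suffix of any record-setter, it suffices to produce one string $t$ with $[t]_2 < [y]_2$ and $t \SUFFIX y$, that is, $\mu(t)\,w \ge \mu(y)\,w$. Since $\mu(t)\,w$ and $\mu(y)\,w$ are just the first columns of the two matrices, this reduces to checking only the two inequalities $G(t) \ge G(y)$ and $G(t') \ge G(y')$; the second column (the $G(\cdot'')$ entries) plays no role here, which is exactly why the suffix comparator, rather than the full infix comparator, is the right tool.

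First I would read off the relevant matrices directly from the computations in Theorem~\ref{fibonacci-vals}. These give
$$\mu(1(10)^i) = \begin{bmatrix} F_{2i+2} & F_{2i+1} \\ F_{2i} & F_{2i-1} \end{bmatrix}, \qquad \mu((10)^i 0) = \begin{bmatrix} F_{2i+2} & F_{2i} \\ F_{2i+1} & F_{2i-1} \end{bmatrix},$$
so the first column of $\mu(y)$ is $\begin{bmatrix} F_{2i+2} \\ F_{2i} \end{bmatrix}$. The natural candidate is therefore $t = (10)^i 0$, whose first column is $\begin{bmatrix} F_{2i+2} \\ F_{2i+1} \end{bmatrix}$. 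This $t$ has the same length $2i+1$ as $y$ but begins with $10$ rather than $11$ (and for $i = 0$ it is simply the string $0$, which is smaller than $y = 1$), so $[t]_2 < [y]_2$ in every case, and replacing $y$ by $t$ inside any longer string keeps it strictly smaller because the lengths match.

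I would then finish by comparing the two first columns entrywise: the top entries agree, while $F_{2i+1} \ge F_{2i}$ supplies the bottom inequality, so $\mu(t)\,w \ge \mu(y)\,w$, i.e. $t \SUFFIX y$. With $[t]_2 < [y]_2$ as well, Lemma~\ref{endLemma} yields that $1(10)^i$ cannot be the suffix of a record-setter, which is the claim.

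The computation is routine, so there is no serious obstacle, only two points that need care. The first is the boundary case $i = 0$, where I must verify directly that $\mu(0)\,w = \begin{bmatrix} 1 \\ 1 \end{bmatrix} \ge \begin{bmatrix} 1 \\ 0 \end{bmatrix} = \mu(1)\,w$. The second, more conceptual, point is that one cannot strengthen this to an infix statement: the second columns of $\mu(t)$ and $\mu(y)$ are \emph{not} comparable, since $G(t'') = F_{2i}$ is strictly less than $G(y'') = F_{2i+1}$, so an infix replacement would fail. It is precisely because $1(10)^i$ only needs to be excluded as a suffix that comparing first columns is enough.
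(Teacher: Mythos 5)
Your proposal is correct and is essentially the paper's own proof: the same replacement string $t = (10)^i 0$, the same suffix comparator $t \SUFFIX 1(10)^i$ via the Fibonacci matrices of Theorem~\ref{fibonacci-vals}, and the same entrywise comparison of first columns ($F_{2i+2} = F_{2i+2}$, $F_{2i+1} \geq F_{2i}$). Your added remarks on the $i=0$ case and on why an infix comparison would fail are sound but not needed beyond what the paper does.
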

	\begin{proof}
		Consider $t = (10)^i0$ and $y = 1(10)^{i}$. Then
		\begin{equation*}
			\mu(t) = \begin{bmatrix}
				F_{2i + 2} & F_{2i}\\
				F_{2i + 1} & F_{2i - 1}
			\end{bmatrix} \quad
			\mu(y) = \begin{bmatrix}
				F_{2i + 2} & F_{2i + 1}\\
				F_{2i} & F_{2i - 1}
			\end{bmatrix} .
		\end{equation*}
		and
		\begin{equation*}
			\mu(t)\WMAT = \begin{bmatrix}
				F_{2i + 2}\\
				F_{2i + 1}
			\end{bmatrix} \geq
			\mu(y)\WMAT = \begin{bmatrix}
				F_{2i + 2}\\
				F_{2i}
			\end{bmatrix}
		\end{equation*}
		Hence $t \SUFFIX y$, and $h$ cannot end in $y$.
	\end{proof}

	\begin{theorem}
		A record-setter $h \in R$ cannot contain the substring $11$.
	\end{theorem}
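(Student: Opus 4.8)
The plan is to argue by contradiction: suppose some $h \in R$ contains the factor $11$, and derive a contradiction from the three structural restrictions already established. The facts I will use are (i) $h$ contains no $111$ (Corollary \ref{lemma111}); (ii) no $00$ occurs anywhere to the right of an occurrence of $11$, which is the consequence of Lemma \ref{lemma1100} recorded in its proof, coming from the infix comparison $(10)^{i+1} \INFIX 1(10)^i0$; and (iii) $h$ does not end in $1(10)^i$ for any $i \geq 0$ (Lemma \ref{lemma110}). The key point is that these three facts together pin down the shape of the tail of $h$ following its last $11$ so tightly that the tail is forced to be a prohibited suffix.

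Concretely, I would examine the \emph{rightmost} occurrence of $11$ in $h$ and write $h = x\,11\,y$, where by maximality $y$ contains no $11$. First I would check that $y$ is nonempty and begins with $0$: it begins with $0$ because a leading $1$ would create $111$, and it is nonempty because otherwise $h = x\,11$ ends in $1$, which is excluded by the $i = 0$ case of Lemma \ref{lemma110}. Next, $y$ contains no $00$, since any such $00$ would lie to the right of the displayed $11$; combined with the absence of $11$ in $y$, this forces $y$ to be an alternating block $0101\cdots$. Finally $y$ must end in $0$, again because $h$ cannot end in $1$. These constraints leave only $y = 0(10)^j$ for some $j \geq 0$.

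Substituting back gives $h = x\,11\,0\,(10)^j = x\,1\,(10)^{j+1}$, so $h$ ends in $1(10)^{j+1}$ with $j+1 \geq 1$, contradicting Lemma \ref{lemma110} and completing the proof. I expect the delicate points to be bookkeeping rather than substance: one must cover the degenerate cases (the $11$ sitting at the very start, so $x = \epsilon$, and the smallest case $j = 0$, where $y = 0$ and $h$ ends in $110 = 1(10)^1$), and one must be explicit that the ``no $00$ to the right of $11$'' step really does produce an infix of the prohibited form $1(10)^i0$ with $i \geq 1$. That last verification amounts to reading the factor from the first $1$ of the $11$ up to and including the first $0$ of the offending $00$, which is exactly $11(01)^{i-1}00 = 1(10)^i0$; granting that structural input, what remains is just the short substitution above, so the real work lies in the earlier Fibonacci computations rather than in this final step.
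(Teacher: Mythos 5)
Your proof is correct and is essentially the paper's own argument: both examine the rightmost occurrence of $11$, invoke Lemma~\ref{lemma1100} to rule out any $00$ to its right (together with Corollary~\ref{lemma111} ruling out $111$), conclude that the tail must be alternating so that $h$ ends in $1(10)^i$, and then contradict Lemma~\ref{lemma110}. The only difference is that you spell out the bookkeeping (the empty-tail case via $i=0$, and the verification that a $00$ after a $11$ really yields the forbidden infix $1(10)^i0$) that the paper leaves implicit.
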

	\begin{proof}
		Suppose it does. Consider the rightmost $11$. Due to Lemma \ref{lemma1100}, there cannot be two consecutive 0s
		to its right. Therefore, the string must end in $1(10)^i$, which is impossible due to Lemma \ref{lemma110}.
	\end{proof}

	Therefore, we have shown that a record-setter $h$ is a concatenation of multiple strings of the form $1(0^i)$, for $i>0$. The next step establishes an upper bound on $i$ and shows that $i \leq 3$.

	\begin{theorem} \label{only10100} A record-setter $h \in R$ cannot contain the substring $10000$.
	\end{theorem}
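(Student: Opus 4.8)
Following the established method, the plan is to show that the substring $10000$ can always be replaced by a lexicographically/numerically smaller substring without decreasing the $G$-value, via the infix comparator of Lemma~\ref{gc_lemma}. So I first compute $\mu(10000)$ using Theorem~\ref{matrix_linearization}, namely $\mu(10000) = \mu(1)\mu(0)^4$. Since $\mu(0) = \begin{bmatrix} 1 & 0 \\ 1 & 1 \end{bmatrix}$, we have $\mu(0)^4 = \begin{bmatrix} 1 & 0 \\ 4 & 1 \end{bmatrix}$, and multiplying on the left by $\mu(1) = \begin{bmatrix} 1 & 1 \\ 0 & 1 \end{bmatrix}$ gives $\mu(10000) = \begin{bmatrix} 5 & 1 \\ 4 & 1 \end{bmatrix}$. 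The task is then to exhibit a binary string $t$ with $[t]_2 < [10000]_2 = 16$, of the same length (five bits, so that the replacement keeps surrounding positions aligned and genuinely decreases the integer), with $\mu(t) \geq \mu(10000)$ entrywise.

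**Choosing the replacement string.** The natural candidate is a string of the form $10\,ab\,c$ that is numerically smaller but has a larger or equal $\mu$-matrix; the entry $G(t) = 5$ forces a reasonably large Stern value, which among five-bit strings starting with $10$ suggests something like $t = 10100$ or $t = 10010$. Computing $\mu(10100) = \mu(101)\mu(00)$: since $\mu(101) = \begin{bmatrix} 2 & 3 \\ 1 & 2 \end{bmatrix}$ and $\mu(00) = \begin{bmatrix} 1 & 0 \\ 2 & 1 \end{bmatrix}$, we get $\mu(10100) = \begin{bmatrix} 8 & 3 \\ 5 & 2 \end{bmatrix}$, which dominates $\begin{bmatrix} 5 & 1 \\ 4 & 1 \end{bmatrix}$ entrywise, while $[10100]_2 = 20 > 16$. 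That comparison goes the wrong way numerically, so I would instead look at candidates with the same number of digits whose leading structure keeps them below $16$; the cleanest is to verify directly that $t = 10001$ will not help (it has the same high bits) and that the correct move is a shorter replacement. In fact the honest approach is to allow $t$ to be a string of length five that is numerically smaller: testing $t = 10010$ gives $\mu(10010) = \mu(1001)\mu(0)$, and one checks whether its entries dominate those of $\mu(10000)$.

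**The anticipated obstacle and the fix.** The main subtlety is that a naive same-length replacement may fail entrywise domination in the off-diagonal entries (the $G(x'')$ and $G(x')$ positions), even when the top-left $G$-values compare favorably; the infix comparator demands all four entries simultaneously. I therefore expect the real content to be identifying the \emph{right} five-bit string and then verifying the four scalar inequalities. If no same-length binary string works, the fallback—consistent with how Lemmas~\ref{lemma1100} and \ref{lemma110} were proved—is to argue that any occurrence of $10000$ inside a record-setter forces a longer forbidden context (using that, by the previous theorem, the record-setter has no $11$, hence is a concatenation of blocks $10^{j}$), and then to compare the block $10000 = 10^4$ against a replacement built from shorter blocks such as $10\,100 = (10)^{2}0$, whose matrix $\mu((10)^2 0) = \begin{bmatrix} F_6 & F_4 \\ F_5 & F_3 \end{bmatrix} = \begin{bmatrix} 8 & 3 \\ 5 & 2 \end{bmatrix}$ dominates $\mu(10000)$; since this replacement is numerically smaller in its surrounding context while preserving length, Lemma~\ref{gc_lemma} finishes the argument.

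**Summary of the plan.** Concretely: (1) compute $\mu(10000) = \begin{bmatrix} 5 & 1 \\ 4 & 1 \end{bmatrix}$; (2) produce a same-length string $t$ with $[t]_2 < 16$ and $\mu(t) \geq \mu(10000)$; (3) invoke Lemma~\ref{gc_lemma} to conclude that no record-setter contains $10000$. The decisive step is (2), finding $t$ and checking all four entrywise inequalities, and I anticipate the off-diagonal entries to be where the verification is tightest.
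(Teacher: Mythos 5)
Your proposal is a plan rather than a proof, and its decisive step (2) is never completed; worse, the fallback you offer rests on a false claim. Every candidate you actually test ($10100$, $10010$, $10001$) begins with $1$, and since $10000$ is the \emph{smallest} five-bit string with leading bit $1$, every such candidate is numerically larger than $16$; none of them can be fed to Lemma~\ref{gc_lemma}, which requires the replacement to be a smaller string. Your fallback then asserts that replacing the block $10000$ by $10100 = (10)^2 0$ is ``numerically smaller in its surrounding context while preserving length''---this is simply false: with identical context on both sides, $[x\,10100\,z]_2 > [x\,10000\,z]_2$ because $[10100]_2 = 20 > 16 = [10000]_2$, so Lemma~\ref{gc_lemma} cannot be invoked. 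Note also that the natural \emph{shorter} replacement $1010$ fails as an infix: $\mu(1010) = \begin{bmatrix} 5 & 3 \\ 3 & 2 \end{bmatrix}$ has bottom-left entry $3 < 4$, so $\mu(1010) \not\geq \mu(10000) = \begin{bmatrix} 5 & 1 \\ 4 & 1 \end{bmatrix}$; this is exactly why the paper uses $1010$ only through the \emph{prefix} comparator, $v\,\mu(1010) = \begin{bmatrix} 5 & 3\end{bmatrix} \geq \begin{bmatrix} 5 & 1\end{bmatrix} = v\,\mu(10000)$, which tests only the top row.

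There are two ways to close the gap. The paper's route uses the block structure you correctly invoke (no $11$, so the string is a concatenation of blocks $10^j$): it disposes of a prefix occurrence via $1010 \PREFIX 10000$, and for an internal occurrence it takes the \emph{leftmost} one, observes that the block immediately to its left must be $10$, $100$, or $1000$, and exhibits a smaller dominating replacement for each of the three extended contexts, e.g.\ $\mu(1000100) \geq \mu(1010000)$ entrywise with $[1000100]_2 = 68 < 80 = [1010000]_2$. Alternatively---and this would have rescued your plan exactly as stated---a same-length smaller replacement does exist, but it must begin with $0$, a possibility you never consider: $t = 01010$ satisfies $[t]_2 = 10 < 16$ and $\mu(01010) = \mu(0)\mu(1010) = \begin{bmatrix} 5 & 3 \\ 8 & 5 \end{bmatrix} \geq \mu(10000)$, so $01010 \INFIX 10000$ and Lemma~\ref{gc_lemma} finishes in a single comparison (leading zeros are harmless here, since $v\,\mu(0) = v$ shows prepending $0$ changes neither the $G$-value nor the represented integer). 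By restricting your search to candidates with leading bit $1$, you searched precisely the set where no witness can exist.
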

	\begin{proof}
		First, we show $h$ cannot begin with $10000$:
		\begin{equation*}
			\VMAT \mu(10\ 10) = \begin{bmatrix}
				5  &  3
			\end{bmatrix}
			\geq
			\VMAT \mu(10000) = \begin{bmatrix}
				5  &  1
				\end{bmatrix}
				\Longrightarrow 10\ 10 \PREFIX 10000 .
		\end{equation*}
		
		Now consider the leftmost $10000$; it has to have a $10$, $100$, or $1000$ on its left:
		\begin{align*}
			\mu(1000\ 100) &= \begin{bmatrix}
				14  &  5 \\
				11  &  4
			\end{bmatrix}
			\geq
			\mu(10\ 10000) = \begin{bmatrix}
				14  &  3 \\
				9  &  2
				\end{bmatrix}
			&&\Longrightarrow 1000\ 100 \INFIX 10\ 10000;
			\\
			\mu(1000\ 1000) &= \begin{bmatrix}
				19  &  5 \\
				15  &  4
				\end{bmatrix}
			\geq
			\mu(100\ 10000) =
			\begin{bmatrix}
				19  &  4 \\
				14  &  3
			\end{bmatrix}
			&&\Longrightarrow 1000\ 1000 \INFIX 100\ 10000;
			\\
			\mu(100\ 100\ 10) &= \begin{bmatrix}
				26  &  15 \\
				19  &  11
				\end{bmatrix}
			\geq
			\mu(1000\ 10000) = \begin{bmatrix}
				24  &  5 \\
				19  &  4
				\end{bmatrix}
			&&\Longrightarrow 100\ 100\ 10 \INFIX 1000\ 10000 .
		\end{align*}
		Consequently, the substring $10000$ cannot appear in $h$.
	\end{proof}

    \section{Limits on the number of 1000s and 100s}\label{limit1001000} At this point, we have established that a  record-setter's binary representation consists of a concatenation of 10s, 100s, and 1000s.
	
	The following theorem limits the appearance of 1000 to the beginning of a record-setter:
	\begin{theorem} \label{begin1000} A record-setter can only have 1000 at its beginning, except in the case $1001000$.
	\end{theorem}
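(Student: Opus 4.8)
The plan is to exploit the block decomposition already in hand: as established above, every $h \in R$ is a concatenation of the blocks $10$, $100$, and $1000$, so I want to show that a $1000$-block can occur only as the first block, the sole exception being the record-setter $h = 1001000$. Accordingly I would assume that some $1000$-block is \emph{not} the first block, fix the \emph{leftmost} such occurrence, and write $B$ for the block immediately to its left (so $B \in \{10, 100, 1000\}$). The strategy is to use the infix and prefix comparators (Lemmas~\ref{gc_lemma} and~\ref{beginLemma}) together with the Fibonacci values of Theorem~\ref{fibonacci-vals} to rule out every possibility for $B$ except $B = 100$, and then to pin the whole string down to $1001000$.

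First I would dispose of $B = 10$ by a single infix comparison: since $\mu(100100) = \begin{bmatrix} 11 & 4 \\ 8 & 3\end{bmatrix} \ge \mu(101000) = \begin{bmatrix}11 & 3\\7&2\end{bmatrix}$, we get $100100 \INFIX 101000$, so $101000$ cannot occur as a substring of any record-setter. For $B = 1000$, note that if this left $1000$-block were itself non-initial it would be an earlier non-first $1000$-block, contradicting minimality; hence it is the first block and $h$ begins with $10001000$. That prefix is eliminated by the prefix comparator, because $v\mu(1010100) = \begin{bmatrix}21 & 8\end{bmatrix} \ge \begin{bmatrix}19 & 5\end{bmatrix} = v\mu(10001000)$ and $1010100$ is the shorter (hence numerically smaller) string, so $1010100 \PREFIX 10001000$.

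This leaves $B = 100$, that is, the substring $1001000$, which I would handle in two stages. Stage one shows that the $100$-block is the first block: letting $A$ be the block preceding it (if any), the cases $A = 10$ and $A = 100$ fall to the infix comparator via $\mu(100101010) = \begin{bmatrix}47&29\\34&21\end{bmatrix} \ge \mu(101001000) = \begin{bmatrix}41&11\\26&7\end{bmatrix}$ and $\mu(1000101010) = \begin{bmatrix}60&37\\47&29\end{bmatrix} \ge \mu(1001001000) = \begin{bmatrix}56&15\\41&11\end{bmatrix}$ (both replacements being strictly smaller strings of equal length), while $A = 1000$ again forces the explicit prefix $10001001000$, which is killed by $(10)^5$ since $v\mu((10)^5) = \begin{bmatrix}89 & 55\end{bmatrix} \ge \begin{bmatrix}71 & 19\end{bmatrix} = v\mu(10001001000)$. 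Hence no block precedes the $100$ and $h$ begins with $1001000$. Stage two shows nothing follows the $1000$-block: any following block begins with $1$, so it suffices to show $h$ cannot begin with $1001000\,B'$ for $B' \in \{10,100,1000\}$, and each is dispatched by the prefix comparator against the short power $(10)^4$, $(10)^4 0$, or $(10)^5$, whose top rows $\begin{bmatrix}34&21\end{bmatrix}$, $\begin{bmatrix}55&21\end{bmatrix}$, $\begin{bmatrix}89&55\end{bmatrix}$ dominate $\begin{bmatrix}34&19\end{bmatrix}$, $\begin{bmatrix}53&19\end{bmatrix}$, $\begin{bmatrix}72&19\end{bmatrix}$ respectively. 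Combining the two stages yields $h = 1001000$.

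The main obstacle is exactly the case $B = 100$. The naive move of replacing the offending $1001000$ by a shorter or equal-length string of no smaller $G$-value is impossible in isolation, because $1001000$ is \emph{itself} a record-setter ($s$ first attains the value $15$ at this index), so no smaller index can dominate it. The resolution is to always enlarge the window either to the genuine start of $h$, where the prefix comparator applies and legitimately permits strictly shorter dominating strings such as powers of $10$, or by one extra block of left-context, where a full infix domination becomes available; the delicate bookkeeping is precisely the observation that $A = 1000$ and $B = 1000$ force an explicit initial segment, which is the only reason the prefix comparator is applicable in those subcases.
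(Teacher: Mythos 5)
Your proof is correct and takes essentially the same approach as the paper: both rest on the $10/100/1000$ block decomposition and a finite case analysis using the infix/prefix comparators with explicit matrix checks, and several of your comparisons (e.g., $100100 \INFIX 101000$ and $\mu(1000101010) \geq \mu(1001001000)$) are literally the paper's own. Your only real deviation is organizational: by anchoring on the \emph{leftmost} non-initial $1000$-block and invoking minimality, you turn the paper's interior-occurrence cases for $10001000$ and $1000\,1001000$ into prefix-comparator checks against short powers of $10$, which also lets you dispense with the paper's separate manual verification of lengths below $12$.
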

	\begin{proof}
		It is simple to check this condition manually for strings of length $<12$. Now, consider a record-setter $h \in R$, with $|h| \geq 12$. String $h$ must at least have three 1s.
		To prove $h$ can only have 1000 at its beginning, we use our comparators to show neither 
		\begin{itemize}
		    \item[(a)] \textcolor{blue}{101000}, nor
		    \item[(b)] \textcolor{blue}{1001000}, nor
		    \item[(c)] \textcolor{blue}{10001000}
		\end{itemize}
    	can appear in $h$.
		
		\begin{itemize}
		    \item[(a)] Consider the following comparison:
		    \begin{equation} \label{tenThousand}
    			\mu(100\ 100) = \begin{bmatrix}
    				11  &  4 \\
    				8  &  3
    				\end{bmatrix}
    			\geq
    			\mu(\textcolor{blue}{10\ 1000}) = \begin{bmatrix}
    				11  &  3 \\
    				7  &  2
    				\end{bmatrix}
    			\Longrightarrow 100\ 100 \INFIX\textcolor{blue}{10\ 1000}.
		    \end{equation}
		    We can infer that 101000 cannot appear in $h$. 
		    
		    \item[(b)] In this case, for every  $x < \textcolor{blue}{1001000}$, the equation $\mu(x) < \mu(\textcolor{blue}{1001000})$ holds, and we cannot find a replacement right away. Therefore, we divide this into two cases:
		    \begin{itemize}
		        \item[(b1)] In this case, we consider \textcolor{blue}{1001000} in the middle or at the end, thus it must have a 10, 100, or 1000 immediately on its left:
		        \begin{align}
        		\label{hundredThousand}
        			\begin{alignedat}{3}
        			    \mu( 100\ 100\ 100 ) = 
                        \begin{bmatrix}
                        41  &  15 \\
                        30  &  11
                        \end{bmatrix}
                        &\geq \
                        &\mu( 10\ \textcolor{blue}{1001000} )
                        & = \begin{bmatrix}
                            41  &  11 \\
                            26  &  7
                        \end{bmatrix},\\
        				\mu( 1000\ 10\ 10\ 10 ) = 
                        \begin{bmatrix}
                        60  &  37 \\
                        47  &  29
                        \end{bmatrix}
                        &\geq \
                        &\mu( 100\ \textcolor{blue}{1001000} )
                        & = \begin{bmatrix}
                        56  &  15 \\
                        41  &  11
                        \end{bmatrix},\\
                        \mu( 10000\ 10\ 10\ 10 ) = 
                        \begin{bmatrix}
                        73  &  45 \\
                        60  &  37
                        \end{bmatrix}
                        &\geq \
                        &\mu( 1000\ \textcolor{blue}{1001000} )
                        & = \begin{bmatrix}
                        71  &  19 \\
                        56  &  15
                        \end{bmatrix}.
        			\end{alignedat}
        		\end{align}
		        \item[(b2)] The other case would be for \textcolor{blue}{1001000} to appear at the beginning:
		        \begin{align}
        		    \label{thousandLeftHundred}
        			\begin{alignedat}{3}
        				\mu( 1000\ 110\ 10 )
                        = \begin{bmatrix}
                        35  &  22 \\
                        27  &  17
                        \end{bmatrix}
        				&\geq
        				&\ \mu( \textcolor{blue}{1001000}\ 10 )
                        = \begin{bmatrix}
                        34  &  19 \\
                        25  &  14
                        \end{bmatrix},\\
                        \mu( 1000\ 10\ 10\ 10 )
                        = \begin{bmatrix}
                        60  &  37 \\
                        47  &  29
                        \end{bmatrix}
                        &\geq
                        &\ \mu( \textcolor{blue}{1001000}\ 100 )
                        = \begin{bmatrix}
                        53  &  19 \\
                        39  &  14
                        \end{bmatrix},\\
        				\mu( 100\ 10\ 10\ 100 )
                        = \begin{bmatrix}
                        76  &  29 \\
                        55  &  21
                        \end{bmatrix}
                        &\geq
                        &\ \mu( \textcolor{blue}{1001000}\ 1000 )
                        = \begin{bmatrix}
                        72  &  19 \\
                        53  &  14
                        \end{bmatrix}.
        			\end{alignedat}
        		\end{align}
		    \end{itemize}
		    Therefore $h$ cannot contain \textcolor{blue}{1001000}.
		    
		    \item[(c)] Just like the previous case, there is no immediate replacement for \textcolor{blue}{10001000}. We divide this into two cases:
		    \begin{itemize}
		        \item[(c1)] There is a prefix replacement for \textcolor{blue}{10001000}:
		        \begin{multline}
		            v. \mu( 10\ 100\ 10 )
                    = \begin{bmatrix}
                    19  &  11
                    \end{bmatrix}
                    \geq
                    v.\mu( \textcolor{blue}{10001000} )
                    = \begin{bmatrix}
                    19  &  5
                    \end{bmatrix}\\ \Longrightarrow 10\ 100\ 10 \PREFIX \textcolor{blue}{10001000}.
		        \end{multline}
		        
		        \item[(c2)] In case \textcolor{blue}{10001000} does not appear at the beginning, there must be a 10, 100, or a 1000 immediately on its left:
		        \begin{align}
        		\label{thousandThousand}
        			\begin{alignedat}{3}
        			\mu( 10\ 10\ 10\ 100 ) = 
                    \begin{bmatrix}
                    55  &  21 \\
                    34  &  13
                    \end{bmatrix}
                    &\geq\
                    &\mu( 10\ \textcolor{blue}{10001000} )
                    & = \begin{bmatrix}
                    53  &  14 \\
                    34  &  9
                    \end{bmatrix},\\
                    \mu( 100\ 10\ 10\ 100 )
                    = \begin{bmatrix}
                    76  &  29 \\
                    55  &  21
                    \end{bmatrix}
                    &\geq\
                    &\mu( 100\ \textcolor{blue}{10001000} )
                    &= \begin{bmatrix}
                    72  &  19 \\
                    53  &  14
                    \end{bmatrix},\\
                    \text{and }\mu( 1000\ 10\ 10\ 100 )
                    = \begin{bmatrix}
                    97  &  37 \\
                    76  &  29
                    \end{bmatrix}
        			&\geq\
        			&\mu( 1000\ \textcolor{blue}{10001000} )
                    &= \begin{bmatrix}
                    91  &  24 \\
                    72  &  19
                    \end{bmatrix}.
        			\end{alignedat}
        		\end{align}
		    \end{itemize}
		\end{itemize}
		
	\end{proof}
	
	Considering Theorem \ref{begin1000}, we can easily guess that 1000s do not often appear in record-setters. In fact, they only appear once for each length. We will prove this result later in Lemmas \ref{even1000} and \ref{odd1000}, but for now, let us consider that our strings only consist of 10s and 100s.
	
	The plan from here onward is to limit the number of 100s. The next set of theorems and lemmas concerns this limitation. To do this, we calculate the maximum $G$-values for strings with $0, 1, \ldots, 5$  100s and compare them. Let $h$ be a string; we define the function $\delta(h)$ as the difference between the number of 0s and 1s occurring in $h$. For strings only containing 100s and 10s, the quantity $\delta(h)$ equals the number of 100s in $h$.

	The following theorem was previously proved in \cite{Lucas:1878}:
	\begin{theorem} \label{max-val-prime}
		The maximum $G$-value for strings of length $2n$ $(s(t)$ for $ 2^{2n-1} \leq t < 2^{2n})$ is $F_{2n + 1}$, and it first
		appears in the record-setter $(10)^n$.
		
		The maximum $G$-value for strings of length $2n + 1$ $(s(t)$ for $ 2^{2n} \leq t < 2^{2n + 1})$ is $F_{2n + 2}$, and it first
		appears in the record-setter $(10)^n0$.
	\end{theorem}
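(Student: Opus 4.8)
The plan is to separate the statement into an \emph{upper bound} on $G$, an \emph{attainment} claim, and a \emph{first-occurrence} claim, and to handle the bound with the matrix calculus of Section~\ref{search_space} together with Theorem~\ref{fibonacci-vals}.

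For the upper bound I would process a binary string $x = b_1 b_2 \cdots b_k$ from left to right, using $\mu(xb) = \mu(x)\mu(b)$ and the fact that the top row of $\mu(x)$ is exactly $[\,G(x)\ \ G(x'')\,]$. Writing this top row as $[\alpha,\beta]$, appending a $0$ sends $[\alpha,\beta] \mapsto [\alpha+\beta,\beta]$ and appending a $1$ sends $[\alpha,\beta] \mapsto [\alpha,\alpha+\beta]$. The key is to carry a strengthened invariant: for every $x$ of length $k$ one has $\alpha + \beta \le F_{k+2}$ and $\max(\alpha,\beta) \le F_{k+1}$. The base case $\mu(\epsilon) = I$ gives $[1,0]$, and each append-step preserves the invariant (for instance, appending $0$ makes the new sum $(\alpha+\beta)+\beta \le F_{k+2}+F_{k+1}=F_{k+3}$ and the new maximum $\alpha+\beta \le F_{k+2}$; the $1$-case is symmetric). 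Since $G(x) = \alpha \le F_{k+1}$, this yields $G(x) \le F_{2n+1}$ for $|x| = 2n$ and $G(x) \le F_{2n+2}$ for $|x| = 2n+1$. Attainment is immediate from Theorem~\ref{fibonacci-vals}, which gives $G((10)^n) = F_{2n+1}$ and $G((10)^n 0) = F_{2n+2}$, so both maxima equal these Fibonacci numbers.

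It remains to show that these maxima \emph{first} occur at $(10)^n$ and $(10)^n 0$. First, every string of length $< 2n$ has $G \le F_{2n} < F_{2n+1}$ by the bound above, so the first index reaching $F_{2n+1}$ has exactly $2n$ bits (and likewise in the odd case). To isolate the smallest $2n$-bit maximizer I would analyse the equality case of the invariant. A maximizer cannot end in $1$, since that would force $\alpha = F_{2n+1}$ already at length $2n-1$, contradicting $\alpha \le F_{2n}$; hence a maximizer is $y0$, where the length-$(2n-1)$ prefix $y$ satisfies $\alpha_y + \beta_y = F_{2n+1}$, that is, $y$ is ``sum-maximal''. Because appending any bit makes the new maximum equal to the old sum, the sum obeys $\sigma_{k+1} \le \sigma_k + \sigma_{k-1}$, with equality exactly when each step adds the larger coordinate; this forces the bits to alternate after a single two-way branch at the initial state $[1,1]$. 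Consequently, for $n \ge 2$ there are precisely two maximizers of length $2n$, namely $(10)^n$ and $110(10)^{n-2}0$, of which $(10)^n$ is numerically smaller (the few small lengths are covered by Table~\ref{tab3}); the identical argument in the odd case singles out $(10)^n 0$.

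The main obstacle is getting the upper bound in a self-contained way: $G(x)$ alone satisfies no clean recurrence as bits are appended, so one must strengthen to the pair $(G(x),G(x''))$ and find the two Fibonacci bounds that are simultaneously inductive. The ``first occurrence'' refinement is then the delicate part, since it is not a statement about the value $F_{2n+1}$ but about \emph{which} string attains it; the equality analysis above---exhibiting exactly two maximizers and comparing them---is precisely what upgrades the maximum-value computation of \cite{Lucas:1878} to the record-setter statement we need.
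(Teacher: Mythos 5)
Your proof is correct, but it takes a route the paper itself never takes, for the simple reason that the paper does not prove Theorem~\ref{max-val-prime} at all: the theorem is introduced with the remark that it ``was previously proved in \cite{Lucas:1878}'' (with actual proofs supplied by Lehmer \cite{Lehmer:1929} and Lind \cite{Lind:1969}, per the introduction), and the paper's machinery is only used to verify the attainment half via Theorem~\ref{fibonacci-vals}. What you supply is a self-contained proof inside the paper's own formalism. Your left-to-right dynamics on the top row $[\alpha,\beta]=[G(x),G(x'')]$ of $\mu(x)$, with $0$ acting by $[\alpha,\beta]\mapsto[\alpha+\beta,\beta]$ and $1$ by $[\alpha,\beta]\mapsto[\alpha,\alpha+\beta]$, follows directly from Theorem~\ref{matrix_linearization}, and your two-part invariant $\alpha+\beta\le F_{k+2}$, $\max(\alpha,\beta)\le F_{k+1}$ is indeed simultaneously inductive, giving the upper bound the paper outsources to the literature. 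More importantly, your equality analysis---a maximizer must end in $0$, its length-$(2n-1)$ prefix must have maximal coordinate sum, and sum-maximality forces alternation after the single two-way branch at the state $[1,1]$ reached after the first bit---proves the \emph{first-occurrence} half of the statement, which is stronger than the value-only claim of Lucas quoted in the introduction and is exactly the part the paper leans on later (Theorems~\ref{even1000}, \ref{eventhm}, and \ref{oddthm} all use that the \emph{smallest} maximizer is $(10)^n$, respectively $(10)^n 0$). As a bonus, you exhibit the complete set of maximizers: exactly two in each length beyond the smallest cases, the other being $110(10)^{n-2}0$ in the even case, which is numerically larger and contains $11$---consistent with the paper's later theorem that record-setters avoid the substring $11$. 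Two steps are terse but sound and deserve a sentence each if this is written up: (i) the claim that $\sigma_{2n-1}=F_{2n+1}$ forces equality at \emph{every} step needs the backward induction $\sigma_m\le\sigma_{m-1}+\sigma_{m-2}\le F_{m+1}+F_m=F_{m+2}$, which pins $\sigma_{m-1}$ and $\sigma_{m-2}$ at their maxima before recursing; and (ii) the forced alternation uses that from length $2$ onward the two coordinates are distinct (the larger equals the previous sum, the kept one is strictly smaller), so the bit achieving equality is unique at every step after the branch at $[1,1]$.
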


	The above theorem represents two sets of strings $(10)^+$ and $(10)^+0$, with $\delta$-values 0 and 1.
	

	\begin{lemma} \label{replace10}
	    Consider a string $yz$, where $z$ begins with 1. If $|z| = 2n$ for $n \geq 1$, then $G(y (10)^{2n}) \geq G(yz)$. If $|z| = 2n + 1$, then $G(y (10)^{2n}0) \geq G(yz)$.
	\end{lemma}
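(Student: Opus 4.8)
\section*{Proof proposal}

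The plan is to reduce the claim to a single suffix-comparator inequality and then control the two entries of the relevant column vector using Theorem~\ref{max-val-prime}. I treat the even case $|z| = 2n$ in detail; the odd case is identical, with $(10)^n0$ replacing $(10)^n$ and the length-$(2n+1)$ bounds replacing the length-$(2n)$ bounds.

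\emph{Reduction to a suffix comparison.} By Theorem~\ref{matrix_linearization} and Lemma~\ref{G_linearization}, for any prefix $y$ we have $G(yz) = v\,\mu(y)\,\mu(z)\,w$ and $G(y(10)^n) = v\,\mu(y)\,\mu((10)^n)\,w$, so
\[
G(y(10)^n) - G(yz) = v\,\mu(y)\,\bigl(\mu((10)^n) - \mu(z)\bigr)\,w .
\]
Since $\mu(y)$ is a product of the nonnegative matrices $\mu(0)$ and $\mu(1)$, the row vector $v\,\mu(y) = \begin{bmatrix} G(y) & G(y'') \end{bmatrix}$ has nonnegative entries; hence it suffices to prove the suffix comparator $(10)^n \SUFFIX z$, that is $\mu((10)^n)\,w \geq \mu(z)\,w$ entrywise. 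This is exactly the mechanism underlying Lemma~\ref{endLemma}, now applied for an arbitrary prefix $y$ rather than only for suffixes of record-setters. Recalling that $\mu(z)\,w$ is the first column of $\mu(z)$, namely $\begin{bmatrix} G(z) \\ G(z') \end{bmatrix}$, the lemma reduces to the two scalar inequalities $G((10)^n) \geq G(z)$ and $G(((10)^n)') \geq G(z')$.

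\emph{Top entry.} By Theorem~\ref{fibonacci-vals} we have $G((10)^n) = F_{2n+1}$, and by Theorem~\ref{max-val-prime} this is the maximum $G$-value over all binary strings of length $2n$. Since $|z| = 2n$, this gives $G(z) \leq F_{2n+1} = G((10)^n)$ at once.

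\emph{Bottom entry (the crux).} Theorem~\ref{max-val-prime} does not apply directly to $z'$, so I first rewrite both sides. Applying Lemma~\ref{remove23}(b) to $((10)^n)' = 3\,0\,(10)^{n-1}$ gives $G(((10)^n)') = G(1(10)^{n-1}) = F_{2n}$ by~\eqref{Fib3rd}. For $z$, write $z = 1u$ with $|u| = 2n-1$, which is possible since $z$ begins with $1$; then $\mu(z) = \mu(1)\,\mu(u)$ shows that the bottom-left entry $G(z')$ equals $G(u')$. Finally, $u'$ is a binary string of length at most $2n-1$ (or the symbol $3$, where $G = 0$), so Theorem~\ref{max-val-prime} yields $G(u') \leq F_{2n}$. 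Combining, $G(z') = G(u') \leq F_{2n} = G(((10)^n)')$. These two inequalities establish $(10)^n \SUFFIX z$ and hence $G(y(10)^n) \geq G(yz)$. Since passing from $(10)^n$ to the longer $(10)^{2n}$ only enlarges both entries of the first column of $\mu$ (as $F_{4n+1} \geq F_{2n+1}$ and $F_{4n} \geq F_{2n}$), the stated inequality $G(y(10)^{2n}) \geq G(yz)$ follows a fortiori; the odd case runs identically with $(10)^n0$, $F_{2n+2}$, and $F_{2n+1}$, the last obtained from~\eqref{Fib4th}.

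The main obstacle is the bottom entry: the prime term $G(z')$ is not governed by Theorem~\ref{max-val-prime} as stated. The decisive idea is to peel off the leading $1$ via $\mu(1u) = \mu(1)\,\mu(u)$, which identifies $G(z')$ with $G(u')$, a $G$-value of a strictly \emph{shorter} string, so that the length-based maximum of Theorem~\ref{max-val-prime} applies to it.
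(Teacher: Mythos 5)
Your proof is correct and takes essentially the same approach as the paper's: reduce the claim to the suffix comparison $(10)^n \SUFFIX z$, bound the top entry $G(z) \leq F_{2n+1}$ by Theorem~\ref{max-val-prime}, and bound the bottom entry by noting that priming a string beginning with $1$ produces a strictly shorter binary string (or a string of $G$-value $0$), so Theorem~\ref{max-val-prime} applies to it as well. Your peeling-off step $G(z') = G(u')$ is just a more explicit rendering of the paper's observation that $|z'| < |z|$, and your a fortiori remark correctly patches the exponent $(10)^{2n}$ in the statement, which is evidently a typo for the length-preserving replacement $(10)^n$ that the paper's own proof (and its later applications) actually use.
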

	
	
	\begin{proof}
		Consider the matrix $\mu((10)^n)\WMAT = \begin{bmatrix}
			F_{2n + 1}\\
			F_{2n}
		\end{bmatrix}$. The suffix matrix for $z$ is $\mu(z)\WMAT = \begin{bmatrix}
			G(z)\\
			G(z')
		\end{bmatrix}$. Since $F_{2n + 1} \geq G(z)$, and $|z'| < |z|$ (since $z$ begins with 1), the value of $G(z')$ cannot exceed $F_{2n}$. Therefore $(10)^n \SUFFIX z$.

		For an odd length $2n + 1$, with the same approach, the matrix $\mu((10)^n0)\WMAT = \begin{bmatrix}
			F_{2n + 2}\\
			F_{2n + 1}
		\end{bmatrix} \geq \mu(z)\WMAT = \begin{bmatrix}
			G(z)\\
			G(z')
		\end{bmatrix}$, and $z$ can be replaced with $(10)^n0$.
	\end{proof}

	To continue our proofs, we need simple lemmas regarding the Fibonacci sequence:
	\begin{lemma} \label{oddFibZero}
		The sequence $F_1F_{2n}$, $F_3F_{2n - 2}$, \ldots, $F_{2n-1}F_2$ is strictly decreasing.
	\end{lemma}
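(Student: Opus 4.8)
The plan is to treat the sequence as having general term $c_k := F_{2k-1}F_{2n-2k+2}$ for $1 \le k \le n$, and to prove strict decrease by showing that each consecutive difference $c_k - c_{k+1}$ is strictly positive for $1 \le k \le n-1$. Written out, this difference is $F_{2k-1}F_{2n-2k+2} - F_{2k+1}F_{2n-2k}$, a difference of two products of Fibonacci numbers whose index sums are both $2n+1$.

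First I would simplify this difference by pushing the outer indices down one step with the Fibonacci recurrence. Expanding $F_{2k+1} = F_{2k} + F_{2k-1}$ and $F_{2n-2k+2} = F_{2n-2k+1} + F_{2n-2k}$ and cancelling the common term $F_{2k-1}F_{2n-2k}$ yields
$$c_k - c_{k+1} = F_{2k-1}F_{2n-2k+1} - F_{2k}F_{2n-2k},$$
now a difference of products whose index sums are both $2n$ and whose index pairs differ by a single shift, i.e.\ of the form $F_iF_j - F_{i+1}F_{j-1}$ with $i = 2k-1$ and $j = 2n-2k+1$.

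Next I would invoke the standard product-difference identity $F_iF_j - F_{i+1}F_{j-1} = (-1)^{i+1}F_{j-i-1}$, valid for all integers $i,j$ (most cleanly verified by the Binet formula, or by induction on $i$). With $i = 2k-1$ this gives $(-1)^{i+1} = 1$ and $j - i - 1 = 2n-4k+1$, so that
$$c_k - c_{k+1} = F_{2n-4k+1}.$$

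Finally comes the positivity step, which is where a little care is needed. The index $2n-4k+1$ is \emph{odd}, and $F_m > 0$ for every odd integer $m$, positive or negative, since $F_{-m} = (-1)^{m+1}F_m$. Hence $c_k - c_{k+1} > 0$ for every $k$, and the sequence is strictly decreasing. The one subtlety — and the only place the argument could go wrong if handled carelessly — is that $2n-4k+1$ turns negative for $k$ near $n$, so the conclusion genuinely relies on signed Fibonacci numbers; the parity observation is exactly what rescues it. If one prefers to avoid negative indices altogether, the same final step can be carried out with Lucas numbers: the reduced difference equals $\tfrac{1}{5}(L_{2n-4k+2} + L_{2n-4k})$, a sum of Lucas numbers at even indices, each of which is positive.
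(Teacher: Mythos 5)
Your proof is correct, but it takes a genuinely different route from the paper's. The paper works with each term $F_{2i+1}F_{2n-2i}$ directly: using the even-index product identity $F_{2a}F_{2b} = F_{a+b}^2 - F_{a-b}^2$ it rewrites every term as the constant $(F_{n+1}^2 - F_n^2)$ plus a remainder ($F_{n-2i}^2 - F_{n-2i-1}^2 = F_{n-2i-2}F_{n-2i+1}$ when $n \geq 2i+1$, and $-F_{2i-n-1}F_{2i+2-n}$ when $n \leq 2i$), and argues that this remainder strictly decreases in $i$; the price is a two-case split according to the sign of the ``inner'' index. You instead telescope consecutive differences, reduce them with the recurrence, and hit the result with d'Ocagne's identity $F_iF_j - F_{i+1}F_{j-1} = (-1)^{i+1}F_{j-i-1}$, obtaining the exact gap $c_k - c_{k+1} = F_{2n-4k+1}$; positivity then follows from the parity fact $F_{-m} = (-1)^{m+1}F_m$, so odd-indexed Fibonacci numbers are positive even at negative indices. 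Your version buys uniformity (no case analysis) and a closed form for each gap, at the cost of invoking signed-index Fibonacci numbers — and you correctly flag that this is the one point where carelessness would sink the argument, since $2n-4k+1$ does go negative; the paper's version stays (essentially) within nonnegative indices but pays with the case distinction and a slightly less transparent monotonicity claim across the boundary between its two cases. Your Lucas-number reformulation $c_k - c_{k+1} = \tfrac{1}{5}\left(L_{2n-4k+2} + L_{2n-4k}\right)$ is also valid, though note it does not literally avoid negative indices; it merely replaces the parity argument for $F$ with the symmetry $L_{-2j} = L_{2j}$.
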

	\begin{proof}
	    Consider an element of the sequence $F_{2i+1}F_{2n - 2i}$. There are two cases to consider, depending on the relative magnitude of $n$ and $2i$.
	    
		If $n \geq 2i + 1$, then
		\begin{align*}
			F_{2i + 1}F_{2n - 2i} &= F_{2i + 2}F_{2n - 2i} - F_{2i}F_{2n - 2i} = F^2_{n + 1} - F^2_{n - 2i - 1} - F^2_n + F^2_{n - 2i}\\ &= (F^2_{n+1} - F^2_{n}) + (F^2_{n - 2i} - F^2_{n - 2i - 1}).
		\end{align*}
		Notice that the first term,
		namely $(F_{n+1}^2 -F_n^2)$ is a constant, while the second term
		 $F^2_{n - 2i} - F^2_{n - 2i - 1} = F_{n - 2i - 2}F_{n - 2i + 1}$ decreases
		 with an increasing $i$.

		If $n \leq 2i$, then
		\begin{equation*}
			F_{2i + 1}F_{2n - 2i} = (F^2_{n+1} - F^2_{n}) + (F^2_{2i - n} - F^2_{2i + 1 - n}).
		\end{equation*}
		The non-constant term is $F^2_{2i - n} - F^2_{2i + 1 - n} = -F_{2i - n - 1}F_{2i + 2 - n}$, which is negative and still decreases.		
	\end{proof}
	
	
	\begin{lemma} \label{evenMult}
	    The sequence $F_0F_{2n}$, $F_2F_{2n - 2}$, \ldots, $F_nF_n$ is strictly increasing.
	\end{lemma}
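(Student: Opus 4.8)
The statement to prove is the companion to Lemma~\ref{oddFibZero}: the sequence $F_0 F_{2n}, F_2 F_{2n-2}, \ldots, F_n F_n$ is strictly increasing. The plan is to follow the same template that worked for the odd-indexed case, namely to isolate a constant term plus a single non-constant term and show the latter grows with the index. Concretely, I would write the general term as $F_{2i} F_{2n-2i}$ for $0 \le i \le \lfloor n/2 \rfloor$ and aim to express it in the form (constant) plus a term that manifestly increases with $i$.

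First I would record the two Fibonacci identities that drive the computation: the product-to-squares identity $F_{m-1} F_{m+1} = F_m^2 - (-1)^m$ (equivalently $F_a F_b$ rewritten via consecutive squares) and the telescoping $F_{2i}F_{2n-2i} = F_{2i+1}F_{2n-2i} - F_{2i-1}F_{2n-2i}$, exactly paralleling the manipulation in Lemma~\ref{oddFibZero}. Applying the same substitution $F_{j}F_{k} = F_{(j+k)/2 + \text{shift}}^2 - F_{(k-j)/2 + \text{shift}}^2$ used there, I expect $F_{2i}F_{2n-2i}$ to collapse to something like $F_n^2 - F_{n-2i}^2$ (up to an index shift), which is a constant $F_n^2$ minus a term $F_{n-2i}^2$ that \emph{decreases} as $i$ increases, so that the whole product increases. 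I would verify the exact index bookkeeping on the two regimes ($n$ even versus odd, or $n \ge 2i$ versus $n < 2i$) just as the previous lemma split into $n \ge 2i+1$ and $n \le 2i$, since the sign of $n-2i$ flips the role of the subtracted square but not the monotonicity.

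The cleanest route may be to prove the stronger pointwise statement by differencing consecutive terms directly: show $F_{2i+2} F_{2n-2i-2} - F_{2i} F_{2n-2i} > 0$ for each valid $i$. Expanding both products with the identity $F_a F_b = \tfrac{1}{5}\big(L_{a+b} - (-1)^b L_{a-b}\big)$ (using Lucas numbers $L$) turns the difference into a single Lucas-number expression whose sign is transparent, since $L_{a-b}$ shrinks as the gap $a-b$ widens. Alternatively, sticking strictly to the tools the paper has already used, I would mirror Lemma~\ref{oddFibZero}'s algebra verbatim and simply track that the non-constant term has the opposite monotonic behavior, giving an increasing rather than decreasing sequence.

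The main obstacle will be the index arithmetic rather than any genuine difficulty: keeping the parities straight so that the squares $F_{n-2i}^2$ (or their analogues) line up correctly across the two cases, and confirming strictness at the boundary where $n-2i$ passes through $0$ or $\pm 1$ (so that no two consecutive products accidentally coincide). Because the Fibonacci recurrence guarantees $F_{n-2i-2}F_{n-2i+1} \ne 0$ away from the trivial endpoints, strictness should hold throughout, but I would check the terminal term $F_n F_n$ explicitly to ensure it is the genuine maximum and not merely a limit. No step should require more than the Fibonacci identities already implicitly invoked in Lemma~\ref{oddFibZero}.
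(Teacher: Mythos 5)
Your proposal is correct and takes essentially the same route as the paper: the identity you anticipate holds exactly as $F_{2i}F_{2n-2i} = F_n^2 - F_{n-2i}^2$ (no index shift, since the Catalan correction term $(-1)^{2i}F_{n-2i}^2$ enters with a plus sign here), and the paper's proof is precisely this observation together with the fact that the subtracted squares $F_{n-2i}^2$ strictly decrease as $i$ increases. Your concern about a second regime is moot, since $0 \le i \le n/2$ keeps $n - 2i \ge 0$ throughout, and strictness is automatic because consecutive indices drop by $2$, avoiding the $F_1 = F_2$ coincidence.
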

	\begin{proof}
		For $0 \leq i \leq n/2$, We already know that $F_{2i}F_{2n - 2i} = F^2_n - F^2_{n - 2i}$. Since the sequence $F^2_n$, $F^2_{2n - 2}$, \ldots, $F^2_0$ decreases, the lemma holds.
	\end{proof}
	
	    
	
	

	In the next theorem, we  calculate the maximum $G$-value obtained by a string $x$ with $\delta(x) = 2$.
	\begin{lemma} [Strings with two 100s] \label{two100s}
		The maximum $G$-value for strings with two 100s occurs for $(10)^n0(10)^{n-1}0$ for lengths $l = 4n$, or
		for $(10)^{n}0(10)^{n}0$ for lengths $l = 4n + 2$, while $l \geq 6$.
	\end{lemma}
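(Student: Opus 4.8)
The plan is to parametrize all admissible strings, reduce the maximization to a one-parameter Fibonacci optimization, and then locate the peak. Since a record-setter is a concatenation of $10$s and $100$s, a string with exactly two $100$s can be written uniquely as $(10)^p\,0\,(10)^q\,0\,(10)^r$ with $p,q\ge 1$ and $r\ge 0$: group the $10$s lying before, between, and after the two $100$s, and absorb each $100$ into the preceding run via $(10)^a100=(10)^{a+1}0$. Fixing the length to $l=2(p+q+r)+2$ is the same as fixing $N:=p+q+r$, so the even lengths $l\ge 6$ correspond to $N\ge 2$. Using Theorem~\ref{fibonacci-vals}, the matrix $\mu(0)$ computed in Section~\ref{search_space}, and Lemma~\ref{G_linearization}, I would write $G\bigl((10)^p0(10)^q0(10)^r\bigr)=v\,\mu((10)^p)\,\mu(0)\,\mu((10)^q)\,\mu(0)\,\mu((10)^r)\,w$ and expand it into an explicit sum of products of three Fibonacci numbers.

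First I would show that the trailing run can be absorbed, i.e.\ that the maximum is always attained with $r=0$. The key claim is the suffix comparison $(10)^{q+r}0\SUFFIX (10)^q0(10)^r$. Computing $\mu((10)^a0(10)^b)\,w$ and $\mu((10)^{a+b}0)\,w$ and subtracting, the two coordinates of the difference turn out to be $F_{2a-1}F_{2b}$ and $F_{2a-2}F_{2b}$, both nonnegative; these identities follow from the addition formula $F_{m+n}=F_{m+1}F_n+F_mF_{n-1}$. By the suffix comparator (as in the proof of Lemma~\ref{endLemma}), replacing the suffix $(10)^q0(10)^r$ of our string by $(10)^{q+r}0$ does not decrease $G$, so $G(p,q,r)\le G(p,q+r,0)$ and it suffices to maximize over $r=0$.

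With $r=0$ the expression collapses to the clean form $f(p,q):=F_{2p+2}F_{2q+2}+F_{2p}F_{2q+1}$, to be maximized over $p+q=N$, $p,q\ge1$. Here the two terms pull in opposite directions: the even--even product $F_{2p+2}F_{2q+2}$ is largest when $p,q$ are balanced (Lemma~\ref{evenMult}), while the even--odd product $F_{2p}F_{2q+1}$ prefers $p$ large and $q$ small (Lemma~\ref{oddFibZero}). To resolve the competition I would compute the discrete difference $f(p+1,q-1)-f(p,q)=F_{p-q}\,(F_{p-q}-2F_{p-q+1})$, using a Catalan-type identity on the even--even part, a d'Ocagne-type identity on the even--odd part, and then $F_{2k+1}=F_{k+1}^2+F_k^2$. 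This quantity is positive for $p<q$, zero for $p=q$, and negative for $p>q$, so $f$ is unimodal along the line $p+q=N$ and is maximized at the most balanced split with $p\ge q$: namely $(p,q)=(n,n-1)$ when $N=2n-1$ (length $4n$) and $(p,q)=(n,n)$ when $N=2n$ (length $4n+2$). These are exactly the strings $(10)^n0(10)^{n-1}0$ and $(10)^n0(10)^n0$ claimed, and the constraint $p,q\ge1$ is precisely the requirement $l\ge 6$.

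The main obstacle is the reduction to $r=0$: establishing the suffix inequality $(10)^{q+r}0\SUFFIX(10)^q0(10)^r$ and carrying out its Fibonacci bookkeeping. A secondary technical nuisance is the sign analysis of the discrete difference when $p<q$, where $F_{p-q}$ carries a negative index; I would sidestep this by re-deriving the same difference in the symmetric form $F_{q-p}(F_{q-p}+2F_{q-p-1})>0$, which uses only nonnegative indices. Finally I would note that the even case $N=2n$ has a tie ($f(n,n)=f(n+1,n-1)$), so the lemma asserts only that the maximum is \emph{attained} at the stated balanced string, not that it is the unique maximizer.
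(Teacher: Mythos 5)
Your proof is correct, and its skeleton matches the paper's: the same parametrization $(10)^p0(10)^q0(10)^r$ with $p,q\ge 1$, $r\ge 0$, a first step forcing the trailing block to be empty, and a second step balancing the two remaining runs. The implementations of both steps differ, though. The paper expands $G$ into four Fibonacci products via $\mu(0)=I_2+\gamma_0$ and sends the trailing exponent to $0$ by maximizing each term separately with Lemma~\ref{oddFibZero}; it then finds the balanced split by applying Lemma~\ref{evenMult} to the single split-dependent term in $G\bigl((10)^p0(10)^q0\bigr)=F_{2p+2q+2}+F_{2p}F_{2q+2}$. You instead prove the suffix inequality $(10)^{q+r}0\SUFFIX (10)^q0(10)^r$ (your difference entries $F_{2q-1}F_{2r}$ and $F_{2q-2}F_{2r}$ check out), which is closer in spirit to the comparator machinery of Section~\ref{search_space} and yields a reusable replacement rule rather than a term-by-term optimization; and for the balancing you compute a discrete difference of your closed form $F_{2p+2}F_{2q+2}+F_{2p}F_{2q+1}$ (which is equal to the paper's expression). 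That difference in fact simplifies further than you state: $f(p+1,q-1)-f(p,q)=F_{2(q-p)}$, and your expression $F_{p-q}\bigl(F_{p-q}-2F_{p-q+1}\bigr)$ equals $-F_{2(p-q)}$ by the identity $F_{2d}=2F_dF_{d+1}-F_d^2$, so the sign analysis and the symmetric rewriting you propose are both sound, and the unimodality conclusion follows at once. A small bonus of your route is that you make explicit the tie $f(n+1,n-1)=f(n,n)$ for $l=4n+2$, which the paper's proof silently resolves by simply setting $i=j=n$; reading the lemma as asserting that the maximum is attained (not uniquely attained) at the stated balanced string is the right interpretation, and it is consistent with how Theorem~\ref{eventhm} later cuts the record-setter list off at $i\le\lceil n/2\rceil$.
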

	\begin{proof}
		
        To simplify the statements, we write $\mu(10) = \mu(1)\mu(0)$ as $\mu_{10}$, and $\mu(0)$ as $I_2 + \gamma_0$, where $$I_2 = \IMAT, \text{ and } \gamma_0 = \ZMAT.$$
        
        Consider the string $(10)^i0 (10)^j0(10)^k$, where $i,j \geq 1$ and $k \geq 0$:
		\begin{align*}
		    G((10)^i0(10)^j0(10)^k) = v\mu^i_{10}\mu(0)\mu^j_{10}\mu(0)\mu^k_{10}w
		    = v\mu^i_{10}(I + \gamma_0)\mu^j_{10}(I + \gamma_0)\mu^{k}_{10}w\\
		    = v\mu^{i + j + k}_{10}w +
		    v\mu^i_{10}\gamma_0\mu^{j + k}_{10}w +
		    v\mu^{i + j}_{10}\gamma_0\mu^k_{10}w +
		    v\mu^i_{10}\gamma_0\mu^j_{10}\gamma_0\mu^k_{10}w.
		\end{align*}
		We now evaluate each summand in terms of Fibonacci numbers.
		\begin{align*}
			v\mu^{i + j + k}_{10}w &= v\begin{bmatrix}
			    F_{2i + 2j + 2k + 1} & F_{2i + 2j + 2k}\\
			    F_{2i + 2k + 2k} & F_{2i + 2j + 2k - 1}
			\end{bmatrix}w = F_{2i + 2j + 2k + 1} \\
			v\mu^i_{10}\gamma_0\mu^{j + k}_{10}w  &= \begin{bmatrix}
			    F_{2i + 1} & F_{2i}
			\end{bmatrix}
			\ZMAT
			\begin{bmatrix}
			    F_{2j + 2k + 1}\\
			    F_{2j + 2k}
			\end{bmatrix} = F_{2i}F_{2j + 2k + 1}
			\\
			v\mu^{i + j}_{10}\gamma_0\mu^k_{10}w &= \begin{bmatrix}
			    F_{2i + 2j + 1} & F_{2i + 2j}
			\end{bmatrix}
			\ZMAT
			\begin{bmatrix}
			    F_{2k + 1}\\
			    F_{2k}
			\end{bmatrix} = F_{2i+2j}F_{2k + 1}\\
			v\mu^i_{10}\gamma_0\mu^j_{10}\gamma_0\mu^k_{10}w &=
			\begin{bmatrix}
			    F_{2i + 1} & F_{2i}
			\end{bmatrix} 
			\ZMAT
			\begin{bmatrix}
			    F_{2j + 1} & F_{2j}\\
			    F_{2j} & F_{2j - 1}
			\end{bmatrix} 
			\ZMAT
			\begin{bmatrix}
			    F_{2k + 1}\\
			    F_{2k}
			\end{bmatrix} = F_{2i}F_{2j}F_{2k + 1} .
		\end{align*}

		For a fixed $i$, according to Lemma \ref{oddFibZero}, to maximize the above equations $k := 0$ must become zero, and $j := j + k$. Then the above equation can be written as
		\begin{equation*}
			G((10)^i0(10)^j0) = v\mu^i_{10}I_2\mu^j_{10}\mu(0)w + v\mu^i_{10}\gamma_0\mu^j_{10}\mu(0)w = F_{2i + 2j + 2} + F_{2i}F_{2j + 2}.
		\end{equation*}

		In case $l = 4n = 2i + 2j + 2$, to maximize the above equation, according to Lemma \ref{evenMult}, $i = n$, $j = n-1$, and the $G$-value would be $F_{4n} + F^2_{2n}$. In case $l = 4n + 2$, $i = j = n$, and the $G$-value is $F_{4n + 2} + F_{2n}F_{2n + 2} = F_{4n + 2} + F^2_{2n + 1} - 1$. Thus the theorem holds. Also in general, for any even $l$, the maximum $G$-value $\leq F_{l} + F^2_{l/2}$. 
	\end{proof}
	
	\begin{lemma} \label{minValSingle100}
		Let $x = (10)^i0(10)^{n - i}$ be a string of length $2n + 1$ for $n \geq 1$ and $i\geq 1$ containing a single 100. Then, the minimum $G$-value for $x$ is $F_{2n + 1} + F_{2n - 1}$.
	\end{lemma}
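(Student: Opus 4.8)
The plan is to reduce the problem to a single closed-form expression in $i$ and then minimize it using Lemma~\ref{oddFibZero}. First I would write $G(x)$ explicitly as a function of $i$, exactly as in the proof of Lemma~\ref{two100s}. Abbreviating $\mu(10)$ by $\mu_{10}$ and writing $\mu(0)=I_2+\gamma_0$ with $I_2=\IMAT$ and $\gamma_0=\ZMAT$, we get
\begin{align*}
G\bigl((10)^i0(10)^{n-i}\bigr)
&= v\,\mu_{10}^{\,i}\,\mu(0)\,\mu_{10}^{\,n-i}\,w
 = v\,\mu_{10}^{\,i}(I_2+\gamma_0)\mu_{10}^{\,n-i}\,w\\
&= v\,\mu_{10}^{\,n}\,w \;+\; v\,\mu_{10}^{\,i}\,\gamma_0\,\mu_{10}^{\,n-i}\,w.
\end{align*}
By Theorem~\ref{fibonacci-vals} the first summand is $F_{2n+1}$; for the second, substituting $v\mu_{10}^{\,i}=\begin{bmatrix}F_{2i+1}&F_{2i}\end{bmatrix}$ and $\mu_{10}^{\,n-i}w=\begin{bmatrix}F_{2n-2i+1}\\F_{2n-2i}\end{bmatrix}$ and multiplying out $\gamma_0$ collapses it to $F_{2i}F_{2n-2i+1}$. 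Hence $G(x)=F_{2n+1}+F_{2i}F_{2n-2i+1}$, and the whole lemma becomes the statement that $F_{2i}F_{2n-2i+1}$ is minimized, over $1\le i\le n$, at $i=1$ with value $F_{2n-1}$.

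Second, I would identify $F_{2i}F_{2n-2i+1}$ with a term of Lemma~\ref{oddFibZero}. Putting $k=n-i$ rewrites it as $F_{2k+1}F_{2n-2k}$, which is precisely the general term of the strictly decreasing sequence $F_1F_{2n},F_3F_{2n-2},\dots,F_{2n-1}F_2$. As $i$ runs over $1\le i\le n$ the index $k=n-i$ runs over $0\le k\le n-1$, so the term is strictly \emph{increasing} in $i$ and attains its minimum at $i=1$, where its value is $F_2F_{2n-1}=F_{2n-1}$. Combining the two steps yields the claimed minimum $F_{2n+1}+F_{2n-1}$, realized uniquely by $x=100(10)^{n-1}$.

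I do not expect a serious obstacle: the argument is one matrix computation followed by a single appeal to Lemma~\ref{oddFibZero}. The only point deserving care is the index bookkeeping in the substitution $k=n-i$: one must check that the admissible range $1\le i\le n$ lines up with the range $0\le k\le n-1$ over which Lemma~\ref{oddFibZero} guarantees strict monotonicity, and that the minimizer is the endpoint $i=1$ (not $i=n$), since the two factors $F_{2i}$ and $F_{2n-2i+1}$ vary in opposite directions and it is not visually obvious which endpoint wins until one invokes the lemma.
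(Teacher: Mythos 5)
Your proposal is correct and follows essentially the same route as the paper: both compute $G(x)=v\,\mu_{10}^{\,i}(I_2+\gamma_0)\mu_{10}^{\,n-i}\,w=F_{2n+1}+F_{2i}F_{2n-2i+1}$ and then invoke Lemma~\ref{oddFibZero} to locate the minimum at $i=1$. The paper's proof is just a one-line version of this; your added care with the substitution $k=n-i$ and the range check simply makes explicit the monotonicity argument the paper leaves implicit.
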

	\begin{proof}
	We have
		\begin{align*}
			G(x) = G((10)^i0(10)^{n - i}) = v \cdot \mu^i_{10} \cdot (I + \gamma_0) \cdot \mu^{n - i}_{10} \cdot w = F_{2n + 1} + F_{2i}F_{2n-2i+1} \\
			\xRightarrow{{\rm Thm.}~\ref{oddFibZero}\ i = 1\ } F_{2n + 1} + F_{2n - 1}.
		\end{align*}
	\end{proof}
	
	\begin{theorem} \label{three100s}
		For two strings $x$ and $y$, if $\delta(x) = 3$ and $\delta(y) = 1$ then $G(x) < G(y)$.
	\end{theorem}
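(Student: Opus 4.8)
I read the statement as comparing strings of the same bit-length, which is the only reading under which it can be true: since $\delta(h)$ counts the $100$-blocks and each $100$ contributes an odd length while each $10$ contributes an even one, a string with $\delta$ odd has odd length, so both $x$ and $y$ have length $2n+1$ for a common $n$. To prove $G(x) < G(y)$ for all admissible $x,y$ it then suffices to establish
\[
\max_{\delta(x)=3,\ |x|=2n+1} G(x) \;<\; \min_{\delta(y)=1,\ |y|=2n+1} G(y),
\]
and the right-hand side is exactly $F_{2n+1}+F_{2n-1}$ by Lemma~\ref{minValSingle100}. Thus the whole problem reduces to bounding the maximum of $G$ over three-$100$ strings.

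The plan is to write a general three-$100$ string of length $2n+1$ as $(10)^{i_0}\,0\,(10)^{i_1}\,0\,(10)^{i_2}\,0\,(10)^{i_3}$ with $i_0,i_1,i_2\ge 1$, $i_3\ge 0$, and $i_0+i_1+i_2+i_3 = n-1$, and to expand $G$ exactly as in the proof of Lemma~\ref{two100s}: writing $\mu(0)=I+\gamma_0$ and multiplying out the three factors $(I+\gamma_0)$ produces $G$ as a sum of $2^3=8$ Fibonacci-product terms. These are the $\gamma_0$-free term $F_{2n-1}$, three ``single'' terms of the shape $F_{2p}F_{2(n-1-p)+1}$ (one for each partial sum $p\in\{i_0,\ i_0+i_1,\ i_0+i_1+i_2\}$), three ``double'' terms, and one ``triple'' term. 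Since $F_{2n-1}$ is the $\gamma_0$-free term, the target inequality is equivalent to showing that the seven $\gamma_0$-carrying terms sum to strictly less than $F_{2n+1}$, so the entire task becomes one of maximizing that sum over the parameters $i_0,i_1,i_2,i_3$.

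This maximization is the heart of the matter and I expect it to be the main obstacle. Unlike the two-$100$ case there are now four parameters, the optimum is not unique, and the numerical gap between the maximum and $F_{2n+1}+F_{2n-1}$ is fairly small, so one cannot bound the eight terms crudely and independently (a naive term-by-term estimate already overshoots). My plan is to eliminate parameters one at a time using the two monotonicity lemmas: Lemma~\ref{oddFibZero} controls the odd$\times$even products and collapses the trailing block ($i_3=0$, pushing the last $100$ to the very end, exactly as $k:=0$ was forced in Lemma~\ref{two100s}), while Lemma~\ref{evenMult} governs the remaining even$\times$even products among the interior segments. This should cut the problem down to a one- or two-parameter Fibonacci optimization whose optimal configurations can be read off directly; the computations suggest these are strings such as $(10)^{2}0(10)^{2}0(10)0$, with ties appearing depending on the parity of $n$, so some care is needed to track all maximizers rather than a single one.

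Finally, having identified the maximizing configuration(s), I would evaluate the resulting maximum in closed form and compare it with $F_{2n+1}+F_{2n-1}$. Recognizing the right-hand side as the Lucas number $L_{2n}=F_{2n+1}+F_{2n-1}$ turns the last step into a routine Fibonacci/Lucas identity, which I would verify after splitting on the parity of $n$ (mirroring the even/odd length split in Lemma~\ref{two100s}) and checking the few shortest admissible lengths $2n+1\in\{9,11\}$ by direct computation to anchor the argument. The only genuine subtlety is keeping the optimization tight enough that the strict inequality survives, since the maximal three-$100$ value and the minimal single-$100$ value $F_{2n+1}+F_{2n-1}$ differ only by a modest amount.
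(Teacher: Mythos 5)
Your opening moves coincide with the paper's: the theorem is indeed to be read as comparing strings of the same (odd) length built from $10$ and $100$ blocks, and both you and the paper reduce it to showing that every $\delta=3$ string of length $2n+1$ has $G$-value strictly below $F_{2n+1}+F_{2n-1}$, the minimum over $\delta=1$ strings supplied by Lemma~\ref{minValSingle100}. Past that point, however, your proposal defers precisely the step that constitutes the proof. The eight-term expansion of $G\bigl((10)^{i_0}0(10)^{i_1}0(10)^{i_2}0(10)^{i_3}\bigr)$ is set up correctly, but the maximization of the seven $\gamma_0$-carrying terms is only announced (``my plan is\dots'', ``this should cut the problem down\dots'', ``I would evaluate\dots''), never performed, and it is not a routine verification: Lemmas~\ref{oddFibZero} and~\ref{evenMult} control products of \emph{two} Fibonacci numbers with a fixed index sum, whereas your expansion contains triple products such as $F_{2i_0}F_{2i_1}F_{2i_2+1}$ whose maximizing configurations conflict from term to term (as you yourself note, term-by-term bounds overshoot). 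Moreover the maximizer is genuinely non-unique: for length $15$ both $(10)^30(10)^20(10)0$ and $(10)^20(10)^20(10)^20$ attain the maximum value $782$, so the bookkeeping of optimal configurations, the closed form of the maximum, and its comparison against $F_{2n+1}+F_{2n-1}$ (here $843$) all remain to be done. Since you flag this maximization as ``the main obstacle'' and then do not surmount it, the core inequality is missing.

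It is worth contrasting this with the paper, which avoids the exact $\delta=3$ maximization altogether. The paper splits a three-$100$ string after its \emph{first} $100$, writing $x=(10)^i100\cdot v$ where $v$ contains the last two $100$s and $|v|=2k$; Lemma~\ref{string-division} gives $G(x)=G((10)^i100)\,G(v)+G((10)^i0)\,G(v')$, and one bounds $G(v)\le F_{2k}+F_k^2$ by the already-established two-$100$ maximum (Lemma~\ref{two100s}) and $G(v')\le F_{2k}$ by the global odd-length maximum (Theorem~\ref{max-val-prime}), while keeping the prefix factors $F_{2i+4}=G((10)^i100)$ and $F_{2i+2}=G((10)^i0)$ exact. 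The comparison string $y=100(10)^i(10)^k$, the minimal $\delta=1$ string of the same length, is decomposed by the same split, so $G(y)-G(x)\ge 0$ becomes a two-parameter Fibonacci inequality in $i$ and $k$ that is settled by elementary identities (with enough slack to give strictness). If you wish to complete your argument, the cheapest repair is to import this idea: you never need the exact three-$100$ maximizer, only an upper bound obtained from a decomposition that the minimal one-$100$ string shares.
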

	\begin{proof}
		Consider the two strings of the same length below:
		\begin{center}
			\begin{tabular}{ll}
				 $x = (10)^i100$ \fbox{$(10)^j0(10)^{k-1-j}0$}
				\\
				$y= 100(10)^i$  \fbox{$(10)^{k}$} .
			\end{tabular}
		\end{center}
		We must prove for $i \geq 0$, $j \geq 1$, and $k - 1 - j \geq 1$, the inequality $G(x) \leq G(y)$ holds, where $y$ has the minimum $G$-value among the strings with a single 100 (see Lemma \ref{minValSingle100}). 
		\begin{align*}
			G(x) &= G((10)^i100)G((10)^j0(10)^{k-1-j}0) +
		G((10)^i0)G(1(10)^{j-1}0(10)^{k-1-j}0)\\
			&\leq F_{2i + 4} (F^2_k + F_{2k}) + F_{2i + 2}F_{2k} =
			F_{2i + 4} \left(\dfrac{2F_{2k + 1} - F_{2k} - 2}{5} + F_{2k} \right) + F_{2i + 2}F_{2k} .\\
			G(y) &= G(100(10)^i)F_{2k + 1} +
			G(100(10)^{i-1}0)F_{2k} \\
			&= (F_{2i+3} + F_{2i + 1})F_{2k + 1} + (F_{2i} + F_{2i + 2})F_{2k}  \\
			&= (F_{2i+4} - F_{2i})F_{2k + 1} + (F_{2i} + F_{2i + 2})F_{2k}.
		\end{align*}
		
		We now show $G(y) - G(x) \geq 0$:
		\begin{multline}
			G(y) - G(x) \geq (F_{2i+4} - F_{2i})F_{2k + 1} + (F_{2i} +  \cancel{F_{2i + 2}})F_{2k} - F_{2i + 4} \left(\dfrac{2F_{2k + 1} + 4F_{2k} - 2}{5} \right) - \cancel{F_{2i + 2}F_{2k}} \\
			\begin{aligned}
				\xRightarrow{\times 5}
				5F_{2i + 4}F_{2k + 1} - 5F_{2i}F_{2k - 1}
				- 2F_{2i + 4}F_{2k + 1} - 4F_{2i + 4}F_{2k} + 2F_{2i + 4} &\\= 
				F_{2i + 4}(3F_{2k + 1} - 4F_{2k} + 2) - 5F_{2i}F_{2k - 1} &\\=
				F_{2i + 4}(F_{2k - 1} + F_{2k - 3} + 2) - 5F_{2i}F_{2k - 1} &\\=
				F_{2i + 4}(F_{2k - 3} + 2) + F_{2k - 1}(F_{2i+4} - 5F_{2i}) &\\=
				F_{2i + 4}(F_{2k - 3} + 2) + F_{2k - 1}(\cancel{5F_{2i}} + 3F_{2i-1} - \cancel{5F_{2i}}) &\geq 0.
			\end{aligned}
		\end{multline}
	\end{proof}

	Theorem \ref{three100s} can be generalized for all odd number occurrences of 100. To do this, replace the right side of the third 100 occurring in $x$ using Lemma~\ref{replace10}.
	
	\begin{lemma} \label{replaceWith10010}
		Let $i \geq 1$, and let $x$ be a string with $|x| = 2i + 3$ and $\delta(x) = 3$.
		Then $y = 100(10)^i \SUFFIX x$.
	\end{lemma}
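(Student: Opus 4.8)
The plan is to unfold the suffix relation $100(10)^i \SUFFIX x$ into its two scalar coordinates and verify each separately. First I would pin down the target vector on the right-hand side. Using $\mu(100) = \begin{bmatrix} 3 & 1 \\ 2 & 1 \end{bmatrix}$ together with \eqref{mat10}, a single multiplication gives
\[
\mu(100(10)^i)\,w \;=\; \mu(100)\begin{bmatrix} F_{2i+1} \\ F_{2i} \end{bmatrix} \;=\; \begin{bmatrix} 3F_{2i+1}+F_{2i} \\ 2F_{2i+1}+F_{2i} \end{bmatrix} \;=\; \begin{bmatrix} F_{2i+3}+F_{2i+1} \\ F_{2i+3} \end{bmatrix}.
\]
Since $\mu(x)\,w = \begin{bmatrix} G(x) \\ G(x') \end{bmatrix}$, the lemma reduces to the two inequalities $G(x) \le F_{2i+3}+F_{2i+1}$ (top coordinate) and $G(x') \le F_{2i+3}$ (bottom coordinate), to be proved for every $x$ of length $2i+3$ with $\delta(x)=3$.

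The top coordinate I would get for free from Theorem \ref{three100s}. The string $y = 100(10)^i$ has length $2i+3$ and $\delta(y)=1$, whereas $\delta(x)=3$; applying the theorem to this equal-length pair yields $G(x) < G(100(10)^i) = F_{2i+3}+F_{2i+1}$, which is even slightly stronger than required. As a sanity check, note that $100(10)^i = (10)^1 0\,(10)^i$ is exactly the minimal single-$100$ string of Lemma \ref{minValSingle100}, whose value $F_{2i+3}+F_{2i+1}$ matches the number just computed.

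The bottom coordinate carries the actual idea, and this is the step I would treat most carefully. Here I would use that $x$, being a (block-aligned) suffix of a record-setter, is a concatenation of $10$- and $100$-blocks, so it begins with the digit $1$ followed by $0$. Priming then turns this leading $1$ into $3$ and, by Lemma \ref{remove23}(b), removes one symbol, so $x'$ is a genuine binary string of length exactly $2i+2 = 2(i+1)$. The key point is that no further information about the internal shape of $x'$ is needed: by Theorem \ref{max-val-prime} the largest $G$-value over all binary strings of length $2(i+1)$ is $F_{2(i+1)+1} = F_{2i+3}$, so $G(x') \le F_{2i+3}$ automatically. Combining the two coordinates gives $\mu(x)\,w \le \mu(100(10)^i)\,w$, that is, $100(10)^i \SUFFIX x$.

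I expect the only delicate point to be the length bookkeeping in the bottom coordinate: one must confirm that priming a block-aligned $x$ shortens it by \emph{exactly} one symbol, so that $x'$ lands at length $2i+2$ rather than higher, which is precisely why it matters that $x$ starts with $10$ and not with a leading $1$-run or a leading $0$. Should one want the statement for an arbitrary binary $x$ beginning with $1$, the same argument survives, since cases (b)–(c) of Lemma \ref{remove23} can only shorten $x'$ further and hence keep $G(x') \le F_{2i+3}$; the single genuinely excluded possibility is a leading $0$, which never arises for the suffixes to which this lemma is applied.
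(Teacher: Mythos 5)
Your proof is correct and follows essentially the same route as the paper's: the top coordinate via Theorem~\ref{three100s} ($\delta(x)=3$ versus $\delta(y)=1$) and the bottom coordinate via the fact that $y'=(10)^{i+1}$ attains the maximum $G$-value $F_{2i+3}$ over strings of length $2i+2$ (Theorem~\ref{max-val-prime}). The only difference is presentational: you compute $\mu(100(10)^i)\,w$ explicitly and spell out the length bookkeeping for $x'$, which the paper's two-line proof leaves implicit.
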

	\begin{proof}
		We have already shown that $G(y) > G(x)$ (Theorem~\ref{three100s}). Also, the inequality $G(y') > G(x')$ holds since $y' = (10)^{i + 1}$, and $G(y')$ is the maximum possible $G$-value for strings of length $2i + 2$.
	\end{proof}

	\begin{theorem} \label{noFour100s}
		Let $n \geq 4$.  If $|x| = 2n + 4$ and $\delta(x) = 4$, then $x \notin R$.
	\end{theorem}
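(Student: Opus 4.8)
The plan is to follow the template of Theorem~\ref{three100s}: given a $\delta=4$ string $x$ of length $2n+4$, I will produce a numerically smaller string with an equal-or-larger $G$-value, so that a smaller index carries a Stern value at least as big and $x$ cannot be a record-setter. Since four $100$'s already occupy $12$ positions, the hypothesis $n\ge 4$ is exactly what makes $|x|\ge 12$, so the parametrization below has room to exist; the finitely many shorter strings are handled by Table~\ref{tab3}.

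First I would expose the leftmost $100$, writing $x=(10)^i\,100\,S$ where $S$ is a $\delta=3$ string, say $|S|=2m+3$ (here $m\ge 3$ since $S$ carries three $100$'s). By Lemma~\ref{replaceWith10010} we have $100(10)^m \SUFFIX S$, that is, $G(100(10)^m)\ge G(S)$ and $G((100(10)^m)')\ge G(S')$ simultaneously. Feeding these two inequalities into the splitting formula of Lemma~\ref{string-division},
\[
G(x)=G((10)^i100)\,G(S)+G(((10)^i100)'')\,G(S'),
\]
collapses the bound back into a single $G$-value, giving $G(x)\le G(\hat{x})$ with $\hat{x}=(10)^i\,100\,100\,(10)^m$, a string with only $\delta=2$.

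Next I would split into two cases according to whether this reduction already exhibits a smaller index. If $100(10)^m<S$, then $\hat x<x$ (they share the prefix $(10)^i100$), so $\hat x$ is a smaller index with $G(\hat x)\ge G(x)$ and we are done. The remaining case is when the three $100$'s of $S$ are bunched at its front, so that $S\le 100(10)^m$; then $x$ itself has all four $100$'s pushed left and is therefore numerically small. Here I would instead compare $x$ against an explicit balanced $\delta=2$ string of the same length --- the maximal-$G$ configuration $(10)^p0(10)^q0$ of Lemma~\ref{two100s} with $p,q$ as large and as equal as the length allows --- and verify directly that this string is smaller than $x$ while its $G$-value, evaluated through Theorem~\ref{fibonacci-vals}, dominates $G(x)$.

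The computations throughout are mechanical once every factor is written in closed form via Theorem~\ref{fibonacci-vals} and the monotonicity Lemmas~\ref{oddFibZero} and~\ref{evenMult}, together with the $\delta=2$ maximum and the $\delta=1$ minimum recorded in Lemmas~\ref{two100s} and~\ref{minValSingle100}. I expect the bunched case to be the main obstacle: exactly as in the telescoping estimate at the close of Theorem~\ref{three100s}, the inequality $G(y)-G(x)\ge 0$ must be reduced to a product of Fibonacci numbers and shown nonnegative, and one must simultaneously confirm $y<x$ across every admissible value of $i$, $m$, and the internal parameters of $S$. Once the even base case $\delta=4$ is settled this way, larger even values of $\delta$ should follow by the same tail-replacement device (Lemma~\ref{replace10}) used to generalize Theorem~\ref{three100s} to odd $\delta$.
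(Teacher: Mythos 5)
Your Case A is essentially sound and matches the paper's machinery: when a $10$ block separates the first two $100$s (equivalently $S>100(10)^m$), the suffix replacement of Lemma~\ref{replaceWith10010} fed through Lemma~\ref{string-division} gives $\hat x=(10)^i\,100\,100\,(10)^m$ with $G(\hat x)\ge G(x)$ and $\hat x<x$, so $x\notin R$. The genuine gap is Case B, the bunched case you yourself flag as the main obstacle: there your proposed witness $(10)^p0(10)^q0$ is numerically \emph{larger} than $x$, not smaller, so it cannot rule $x$ out no matter how the $G$-values compare. A bunched string is small precisely because its $0$s occur early. Concretely, take $n=5$ and $x=100\,100\,10\,100\,100$: for $p\ge 2$ the witness already beats $x$ at the third bit ($101\cdots>100\cdots$), and for $p=1$ one has $100\,(10)^5\,0=10010101010100>10010010100100=x$. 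In general, any same-length string smaller than a string beginning with $100100$ must have a $0$ under one of the $1$s of $x$, which sit at positions $0,3,6,\dots$; hence every admissible witness begins with $1000$. The balanced strings of Lemma~\ref{two100s} never do, so the verification ``this string is smaller than $x$'' that your plan calls for is false exactly in the case that matters. (The same defect appears for bunched $x$ with $i\ge1$: your prescription ``$p,q$ as large and as equal as possible'' forces $p>i$, and then the witness again exceeds $x$.)

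The paper closes this case with two devices you would need to import. For $x$ beginning with $100100$ it compares against $1000\,(10)^n$ --- smaller because it begins $1000$ --- splitting $x=1001\cdot00x_2$ by Lemma~\ref{string-division}, bounding $G(x)\le 3(F_{2n-2}+F_{n-1}^2)+4(F_{2n}+F_n^2)$ via Lemma~\ref{two100s}, and checking $G(1000\,(10)^n)=4F_{2n+1}+F_{2n}$ dominates for $n\ge4$ (this is where the hypothesis $n\ge4$ is actually consumed; note the witness starts with $1000$, a pattern your candidate set excludes). For $x$ beginning with $10$, after the Lemma~\ref{replaceWith10010} step the paper applies a second, suffix replacement $100\,10\,(10)^m\,100 \SUFFIX 10\,100\,100\,(10)^m$, i.e., it moves one $100$ of the cluster to the end; the result begins $(10)^{i-1}100$ and is therefore smaller than $x$, which begins $(10)^{i-1}101$, regardless of bunching. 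Your outer architecture (find a smaller string of no smaller $G$-value; handle $\delta\ge 6$ by the tail replacement of Lemma~\ref{replace10}) agrees with the paper, but without these two substitutes Case B does not go through.
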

	
	
	\begin{proof}
		Consider three cases where $x$ begins with a 10, a 10010, or a 100100.
		If $x$ begins with $10$, due to Lemma \ref{replaceWith10010}, we can replace the right side of the first 100, with $100(10)^*$, and get the string $y$. For example, if $x = $ 10 10 \textcolor{blue}{100} 10 100 100 100 becomes $y = $ 10 10 \textcolor{blue}{100} \textcolor{blue}{100} 10 10 10 10, which has a greater $G$-value.
		
		Then consider the strings $a = 10\ 100\ 100\ (10)^i$ and $b = 100\ 10\ (10)^i\ 100$:
		\begin{align*}
			\mu(a)\WMAT &= \begin{bmatrix}
				30  &  11 \\
				19  &  7
				\end{bmatrix}
				\begin{bmatrix}
					F_{2i + 1}\\
					F_{2i}
				\end{bmatrix} = 
				\begin{bmatrix}
					30F_{2i + 1} + 11F_{2i}\\
					19F_{2i + 1} + 7F_{2i}
				\end{bmatrix} = 
				\begin{bmatrix}
					11F_{2i + 3} + 8F_{2i + 1}\\
					7F_{2i + 3} + 5F_{2i + 1}
				\end{bmatrix}\\
		\mu(b)\WMAT &= \begin{bmatrix}
				7  &  4 \\
				5  &  3
				\end{bmatrix}
				\begin{bmatrix}
					F_{2i + 4}\\
					F_{2i + 3}
				\end{bmatrix} = 
				\begin{bmatrix}
					7F_{2i + 4} + 4F_{2i + 3}\\
					5F_{2i + 4} + 3F_{2i + 3}
				\end{bmatrix},
		\end{align*}
		so $b \SUFFIX a$ for $i \geq 1$. Therefore, by replacing suffix $a$ with $b$, we get a smaller string with a greater $G$-value. So $x \notin R$.

		Now consider the case where $x$ begins with 10010. Replace the 1st 100's right with $100(10)^{n - 1}$, so that we get $100\ 100\ (10)^{n-1}$. After these replacements, the $G$-value does not decrease, and we also get smaller strings.
		
		The only remaining case has $x$ with two 100s at the beginning. We compare $x$ with a string beginning with 1000, which is smaller. Let $x_2$ represent the string $x$'s suffix of length $2n - 2$, with two 100s. The upper bound on $G(x_2)$ and $G(10x_2)$ is achieved using Lemma \ref{two100s}:
		\begin{equation*}
			G(x) = G(1001\ 00 x_2) = G(1001)G(00x_2) + G(1000)G(10x_2)
			\leq 3(F_{2n-2} + F^2_{n - 1}) + 4(F_{2n} + F^2_n) .
		\end{equation*}
		
		After rewriting the equation to swap $F^2$s with first order $F$, multiply the equation by 5 to remove the $\dfrac{1}{5}$ factor:
		\begin{equation*}
			3(2F_{2n -1} + 4F_{2n - 2} - 2)+ 4(2F_{2n + 1} + 4F_{2n} + 2) = 8F_{2n + 2} + 14F_{2n} + 6F_{2n - 2} + 2
		\end{equation*}

		We now compare this value with $5G(1000\ (10)^n)$:
		\begin{align*}
			5G(1000\ (10)^n) = 20F_{2n + 1} + 5F_{2n}\\
			20F_{2n + 1} + 5F_{2n} &\geq 8F_{2n + 2} + 14F_{2n} + 6F_{2n - 2} + 2 \\ \rightarrow
			12F_{2n + 1} &\geq 17F_{2n} + 6F_{2n - 2} + 2\\ \rightarrow
			12F_{2n - 1} &\geq 5F_{2n} + 6F_{2n - 2} + 2
			\\ \rightarrow
			7F_{2n - 1} &\geq 11F_{2n - 2} + 2 \\ \rightarrow
			7F_{2n - 3} &\geq 4F_{2n - 2} + 2 \\ \rightarrow
			3F_{2n - 3} &\geq 4F_{2n - 4} + 2 \\ \rightarrow
			2F_{2n - 5} &\geq F_{2n - 6} + 2,
		\end{align*}
		which holds for $n \geq 4$. Therefore we cannot have four 100s in a record-setter. For six or more 100s, the same proof can be applied by replacing the fourth 100's right with 10s using Theorem~\ref{replace10}.
	\end{proof}
	
	\begin{theorem} \label{even1000}
		For even lengths $2n + 4$ with $n \geq 0$, only a single record-setter $h$ beginning with 1000 exists. String $h$ is also the first record-setter of length $2n + 4$.
	\end{theorem}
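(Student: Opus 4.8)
The plan is to establish the two assertions in turn: that $1000(10)^{n}$ is the only possible record-setter of length $2n+4$ beginning with $1000$, and that this string is in fact a record-setter and is the smallest one of its length.

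For uniqueness, recall that every record-setter is a concatenation of the blocks $10$, $100$, and $1000$, and that by Theorem~\ref{begin1000} a $1000$-block can occur only at the front of the string (the sole exception, $1001000$, has odd length and so is irrelevant here). Hence a length-$(2n+4)$ record-setter $h$ beginning with $1000$ must have $1000$ as its first block and no further $1000$-block, so $h = 1000\,w$ with $w$ a concatenation of $10$'s and $100$'s. Writing $\delta(h) = 2 + q$, where $q$ is the number of $100$-blocks in $w$, the exclusions of Theorem~\ref{three100s} (odd $\delta$) and Theorem~\ref{noFour100s} (even $\delta \ge 4$), together with their stated generalizations, leave only $\delta(h) \in \{0,1,2\}$; since $h$ begins with $1000$ we have $\delta(h) \ge 2$, forcing $\delta(h) = 2$ and $q = 0$. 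Thus $h = 1000(10)^{n}$ is the only candidate. (For the short lengths $2n+4 < 12$ the assertion is read directly from Table~\ref{tab3}.)

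To verify that $1000(10)^{n}$ is a record-setter, I would compute its $G$-value via Theorem~\ref{matrix_linearization}: since $v\,\mu(1000) = \begin{bmatrix} 4 & 1 \end{bmatrix}$ and $\mu((10)^{n})\,w = \begin{bmatrix} F_{2n+1} \\ F_{2n} \end{bmatrix}$, we obtain $G(1000(10)^{n}) = 4F_{2n+1} + F_{2n}$, which strictly exceeds $F_{2n+4} = 3F_{2n+1} + 2F_{2n}$ because $F_{2n+1} > F_{2n}$. By Theorem~\ref{max-val-prime}, $F_{2n+4}$ is the largest $G$-value attained by any string of length at most $2n+3$, so every shorter index is beaten. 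The remaining competitors are the length-$(2n+4)$ strings below $1000(10)^{n}$, which are exactly the strings $1000\,r$ with $|r| = 2n$ and $[r]_2 < [(10)^{n}]_2$. Using Lemma~\ref{string-division} and the identity $G(1r) = G(r) + G(r')$ gives $G(1000\,r) = 4G(r) + G(r') = 3G(r) + G(1r)$; bounding $G(r) \le F_{2n+1}$ and $G(1r) \le F_{2n+2}$, and noting that both maxima are attained simultaneously only at $r = (10)^{n}$, while Theorem~\ref{max-val-prime} makes $(10)^{n}$ the first (hence smallest) index achieving $F_{2n+1}$, we get $G(r) < F_{2n+1}$ strictly for every admissible $r$. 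Therefore $G(1000\,r) < 3F_{2n+1} + F_{2n+2} = G(1000(10)^{n})$, and $1000(10)^{n}$ is a record-setter.

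Finally, that $1000(10)^{n}$ is the \emph{first} record-setter of its length follows because any other length-$(2n+4)$ record-setter is a concatenation of $10$, $100$, $1000$ not starting with $1000$: its first block is $10$ or $100$ and the next block begins with $1$, so its binary representation starts with $101$ or $1001$, each numerically larger than the prefix $1000$. The step I expect to be most delicate is the same-length comparison above: the individual bounds $G(r) \le F_{2n+1}$ and $G(1r) \le F_{2n+2}$ are loose, and the argument closes only because they are simultaneously tight exactly at $r = (10)^{n}$, so that the strict inequality $G(r) < F_{2n+1}$ for $r < (10)^{n}$ — supplied by the ``first appears'' clause of Theorem~\ref{max-val-prime} — is precisely what is needed.
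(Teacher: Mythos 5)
Your second and third paragraphs are sound, and the second is in fact tighter than the paper's own treatment: the decomposition $G(1000\,r) = 4G(r) + G(r') = 3G(r) + G(1r)$, combined with the strictness supplied by the ``first appears'' clause of Theorem~\ref{max-val-prime}, beats \emph{every} smaller index of the same length, whereas the paper only compares $h$ against shorter strings (via $G(h) > F_{2n+4}$) and against block-form competitors, leaving the remaining same-length indices to the replacement machinery of Section~\ref{search_space}.

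The genuine gap is in your uniqueness argument. You dispose of $h = 1000\,w$ with two or more $100$-blocks by citing Theorem~\ref{noFour100s} (and its stated generalization). But the paper's proof of Theorem~\ref{noFour100s} treats only strings whose first block is $10$ or $100$ --- its case analysis is ``begins with $10$, $10010$, or $100100$'' --- and its last case is in fact settled by a comparison \emph{against} strings beginning with $1000$. Strings beginning with $1000$ are precisely the case that Theorem~\ref{even1000} exists to handle; the paper makes this division of labor explicit at the start of Section~\ref{final_even} (``if the string $x$ contains 1000, by Theorem~\ref{even1000}, the only record-setter has a $\delta$-value of 2; also, $x$ cannot have more than two 100s according to Theorem~\ref{noFour100s}''). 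So your appeal to Theorem~\ref{noFour100s} here is circular. Note also that your second paragraph cannot plug this hole: showing $G(1000\,r) < G(h)$ for $[r]_2 < [(10)^n]_2$ does not disqualify $1000\,r$ as a record-setter, because a record-setter need only beat the indices \emph{below itself}, and $h$ lies above $1000\,r$. The missing idea is the paper's quantitative bound: for $x$ a concatenation of $10$s and $100$s of length $2n$ containing at least one $100$, one has $G(1000\,x) = 4G(x) + G(x') \leq 4\bigl(F_{2n} + F_n^2\bigr) + F_{2n} \leq F_{2n+4}$ for $n \geq 5$ (using Lemma~\ref{two100s}; the cases $n = 3, 4$ are checked directly), and since the \emph{shorter} --- hence numerically smaller --- string $(10)^{n+1}0$ already attains $F_{2n+4}$ by Theorem~\ref{max-val-prime}, every such $1000\,x$ is beaten by a smaller index and is therefore not a record-setter. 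Some argument of this kind, carried out from scratch for $1000$-prefixed strings, is indispensable and cannot be replaced by a citation of Theorem~\ref{noFour100s}.
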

	\begin{proof}
		The only record-setter is $h = 1000\ (10)^n$. Let $x$ be a string with length $|x| = 2n$ containing 100 substrings ($n$ must be $\geq 3$ to be able to contain 100s). Using Lemma \ref{two100s}:
		\begin{equation*}
			5G(1000\ x) \leq 4(5F^2_n + 5F_{2n}) + 5F_{2n}
			\leq 8F_{2n + 1} + 21F_{2n} + 8 \leq 5F_{2n + 4}.
		\end{equation*}
		The above equation holds for $n \geq 5$. For $n = 4$:
		\begin{equation*}
			5G(1000\ x) \leq 4(F^2_4 + F_{8}) + F_{8} = 141 \leq F_{12} = 144.
		\end{equation*}
		For $n = 3$:
		\begin{equation*}
			G(1000\ 100\ 100) = 52 \leq G(101010100) = 55.
		\end{equation*}
		Ergo, the $G$-value cannot exceed $F_{2n + 4}$, which the smaller string $(10)^{n + 1}0$ already has.

		Let us calculate $G(h)$:
		\begin{align*}
			G(1000\ (10)^{n}) = 4F_{2n + 1} + F_{2n} = F_{2n + 2} + 3F_{2n + 1}\\ = F_{2n + 3} + 2F_{2n + 1} > F_{2n + 3} + F_{2n + 2} = F_{2n + 4} .
		\end{align*}
		Hence, the string $1000\ (10)^{n}$ is the first record-setter of length $2n + 4$ with a $G$-value greater than $F_{2n + 4}$, which is the maximum (Theorem~\ref{max-val-prime}) generated by the strings of length $2n + 3$. This makes $h$ the first record-setter of length $2n + 4$.
	\end{proof}

	\begin{theorem}
		Let $x$ be a string with length $|x| = 2n + 9$, for $n \geq 3$, and $\delta(x) \geq 5$. Then $x \notin R$.
	\end{theorem}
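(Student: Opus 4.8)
The plan is to show that every string $x$ of length $2n+9$ with $\delta(x)\ge 5$ satisfies $G(x)\le F_{2n+9}$. This suffices, because the string $(10)^{n+4}$ has only $2n+8$ bits and is therefore numerically smaller than $x$, while $G((10)^{n+4})=F_{2n+9}$ by \eqref{Fib1st}; thus a strictly smaller index already matches the value of $x$, forcing $x\notin R$. This is exactly the mechanism used at the end of Theorem~\ref{even1000}. Two preliminary remarks streamline the setup: by Theorem~\ref{begin1000} a block $1000$ can occur only at the very start, and that case is absorbed into the present estimate by the same $G(1000\cdot(\text{rest}))$ bound used in Theorem~\ref{even1000}, so we may assume $x$ is a concatenation of $10$'s and $100$'s; and since the total length $2n+9$ is odd, the number of $100$'s, which equals $\delta(x)$, is necessarily odd, so $\delta(x)\ge 5$ means $x$ contains at least five $100$'s.

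First I would reduce to the case of exactly five $100$'s. Write $x=P\,z$, where $P$ is the part of $x$ up to and including its fifth $100$ and $z$ is the remainder. If $j$ denotes the number of $10$-blocks occurring inside $P$, then $|P|=15+2j$ and $|z|=2(n-3-j)$ is even, and (when nonempty) $z$ begins with $1$. Applying Lemma~\ref{replace10} with $y=P$ replaces $z$ by $(10)^{\,n-3-j}$ without decreasing the $G$-value, producing $\hat x=P\,(10)^{\,n-3-j}$ with exactly five $100$'s and $G(\hat x)\ge G(x)$. Hence it suffices to bound $G$ over all length-$(2n+9)$ strings with $\delta=5$.

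For such a string I would expand $G$ exactly as in the proof of Lemma~\ref{two100s}: writing each $100$ as $\mu_{10}(I+\gamma_0)$ and each $10$ as $\mu_{10}$, and using $\gamma_0=w v$, the quantity $G$ becomes a sum of $2^{5}$ products of Fibonacci numbers, one per subset of ``broken'' $100$'s, each product having the shape $F_{2c_0}F_{2c_1}\cdots F_{2c_{m-1}}F_{2c_m+1}$ with $\sum_i c_i=n+2$. Comparing products with a fixed index-sum via Lemmas~\ref{oddFibZero} and~\ref{evenMult}, I would show that the total sum is maximized by a single prescribed configuration of the five $100$'s (with the $10$'s pushed to one end), yielding a closed form for the maximum; the base instance $n=3$ is $G((100)^5)=571<610=F_{15}$. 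Finally I would check $(\text{maximum})\le F_{2n+9}$ for $n\ge 3$ by clearing the factors of $\tfrac15$ through the identity $5F_k^2=F_{2k+1}+F_{2k-1}-2(-1)^k$ (the same device used in Theorems~\ref{three100s} and~\ref{noFour100s}), reducing to an elementary monotone Fibonacci inequality, and verifying the few small values of $n$ by direct computation.

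The main obstacle is the middle step: pinning down, among all placements of the five $100$'s, the configuration that maximizes $G$. Unlike the two-$100$ computation of Lemma~\ref{two100s}, which involved only four summands, the expansion here has $2^{5}$ terms, so the monotonicity arguments of Lemmas~\ref{oddFibZero} and~\ref{evenMult} must be applied repeatedly (or set up as an induction on the number of $100$'s) to collapse the optimization to a single candidate before the final estimate can be made. Once the optimal configuration is isolated, the concluding inequality $G(x)\le F_{2n+9}$ and the comparison with the shorter string $(10)^{n+4}$ are routine.
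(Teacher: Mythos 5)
Your argument stands or falls with the inequality announced at the outset: that every $x$ with $|x|=2n+9$ and $\delta(x)\ge 5$ satisfies $G(x)\le F_{2n+9}$, so that the shorter string $(10)^{n+4}$ already matches it. That inequality is \emph{false} for $n\ge 6$, so the gap is not merely in the optimization step you deferred, but in the target you are optimizing toward. Take
\begin{equation*}
x=(10)^2 0\,(10)^2 0\,(10)^2 0\,(10)0\,(10)0=(10100)^3\,(100)^2,
\end{equation*}
which has length $21=2\cdot 6+9$ and $\delta(x)=5$. By Theorem~\ref{matrix_linearization},
\begin{align*}
\mu(x)&=\bigl(\mu(10)\,\mu(100)\bigr)^{3}\,\mu(100)^{2}
=\begin{bmatrix}8&3\\5&2\end{bmatrix}^{3}\begin{bmatrix}3&1\\2&1\end{bmatrix}^{2}\\
&=\begin{bmatrix}782&297\\495&188\end{bmatrix}\begin{bmatrix}11&4\\8&3\end{bmatrix}
=\begin{bmatrix}10978&4019\\6949&2544\end{bmatrix},
\end{align*}
so $G(x)=10978>10946=F_{21}$. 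Your base case already signals the danger: the margin $571<610$ at $n=3$ shrinks to $1560<1597$ at $n=4$ (string $10\,(100)^5$) and to $4141<4181$ at $n=5$ (string $10100\,100\,10100\,100\,100$), and it crosses over at $n=6$. The same example also refutes your structural guess that the maximum is attained ``with the $10$'s pushed to one end'': for $n=6$ the end-loaded placements $\mu(10)^3\mu(100)^5$ and $\mu(100)^5\mu(10)^3$ give $G$-values $10767$ and $9095$, both below $F_{21}$, and it is precisely the interleaved placements that climb above $F_{21}$ (e.g., $10100\,10100\,100\,10100\,100$ gives $10981$).

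This is exactly why the paper never benchmarks against $F_{2n+9}$. Every $\delta(x)\ge 5$ string of this length (a concatenation of $10$'s and $100$'s) begins with $1001$ or $1010$, so the same-length string $1000\,(10)^{n+2}\,0$, the first record-setter of length $2n+9$ by Theorem~\ref{odd1000}, is smaller than all of them, and its $G$-value is $4F_{2n+6}+F_{2n+5}=37F_{2n+1}+23F_{2n}$, exceeding $F_{2n+9}=34F_{2n+1}+21F_{2n}$ by $3F_{2n+1}+2F_{2n}$. That cushion is what absorbs strings like $x$ above: $11933>10978$, whereas $10946<10978$. Concretely, the paper uses this benchmark only for strings beginning $100\,100\,100$ (whose $G$-value it bounds by $41(F_n^2+F_{2n})+15F_{2n}$ via Lemma~\ref{two100s}), and it disposes of every other configuration — including my counterexample — by replacement arguments that produce smaller \emph{same-length} strings with larger $G$-values (the generalization of Theorem~\ref{three100s}, the method of Theorem~\ref{noFour100s}, and Lemma~\ref{replaceWith10010}). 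To repair your proof you would have to replace $F_{2n+9}$ by the benchmark $37F_{2n+1}+23F_{2n}$, justify that the benchmark string is smaller, and still carry out the multi-parameter maximization you postponed; at that point you have essentially reconstructed the paper's case analysis.
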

	
	\begin{proof}
		Our proof provides smaller strings with greater $G$-values only based on the position of the first five 100s. So for cases where $\delta(x) \geq 7$, replace the right side of the fifth 100 with 10s (Lemma~\ref{replace10}). Therefore consider $\delta(x)$ as 5, and $x = (10)^i0\ (10)^j0\ (10)^k0\ (10)^p0\ (10)^q0\ (10)^r$,
		with $i,j,k,p,q, \geq 1$ and $r \geq 0$.
		
		First, we prove that if $i = 1, j = 1, k = 1$ does not hold, then $x \notin R$.
    \begin{itemize}
		\item[(a)] If $i>1$, then smaller string $100(10)^{n + 3}$ has a greater $G$-value as proved in Lemma \ref{three100s}.
        
        \item[(b)] If $j > 1$, using the approach as in Theorem~\ref{noFour100s}, we can obtain a smaller string with a greater $G$-value.
        
        \item[(c)] If $k > 1$, using Lemma~\ref{replaceWith10010}, by replacing $(10)^k0\ (10)^p0\ (10)^q0\ (10)^r$ with $100\ (10)^{n + 1 - j}$, we obtain $y$ with $G(y) > G(x)$.
        
	\end{itemize}
		Now consider the case where $i = 1$, $j = 1$, $k = 1$. Let $x_2$, with $|x_2| = 2n$, be a string with two 100s:
		\begin{align*}
			&G(100100100\ x_2) = 41(F^2_n + F_{2n}) + 15F_{2n} \leq 16.4F_{2n + 1} + 47.8F_{2n} + 16.4\\
			&G(100010101\ 0(10)^{n-1}0) = 23F_{2n} + 37F_{2n + 1}\\
			&23F_{2n} + 37F_{2n + 1} - 16.4F_{2n + 1} - 47.8F_{2n} - 16.4 \geq 20F_{2n + 1} -25F_{2n} - 17 \geq 0
		\end{align*}
		The above equation holds for $n \geq 2$.
		
	\end{proof}
	
	\begin{theorem} \label{odd1000}
		For odd lengths $2n + 5$ with $n \geq 1$, only a single record-setter $h$ beginning with 1000 exists. String $h$ is also the first record-setter of length $2n+5$.
	\end{theorem}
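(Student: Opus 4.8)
The plan is to mirror the proof of Theorem~\ref{even1000}, the even-length analogue, with one genuinely new complication. Since a record-setter is a concatenation of $10$'s, $100$'s, and $1000$'s with $1000$ permitted only at the front, any record-setter of length $2n+5$ beginning with $1000$ has the form $h=1000\,u$ with $u\in\{10,100\}^*$ and $|u|=2n+1$; because $|u|$ is odd, $\delta(u)$ is odd. The candidate I claim is the unique such record-setter is $h^\ast=1000\,(10)^n0$, i.e.\ the single-$100$ case in which the $100$ of $u$ is pushed to the far right. The previous record value, that is the maximum of $G$ over all strings of length at most $2n+4$, is $F_{2n+5}$, attained by $(10)^{n+2}$ (Theorem~\ref{max-val-prime}). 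So it suffices to show that $G(h^\ast)>F_{2n+5}$ while every other admissible $h=1000\,u$ satisfies $G(1000\,u)\le F_{2n+5}$.

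For the computations I would use $\mu(1000)=\mu(1)\mu(0)^3=\begin{bmatrix}4&1\\3&1\end{bmatrix}$, so that Lemma~\ref{G_linearization} gives $G(1000\,u)=v\,\mu(1000)\,\mu(u)\,w=4G(u)+G(u')$. Writing $u=(10)^a0(10)^b$ with $a\ge1$, $b\ge0$, $a+b=n$ for the single-$100$ case, the factorization $\mu(u)=\mu((10)^{a+b})+\mu((10)^a)\gamma_0\mu((10)^b)$ (as in the proof of Lemma~\ref{two100s}) together with Theorem~\ref{fibonacci-vals} yields
\[
G\bigl(1000\,(10)^a0(10)^b\bigr)=4F_{2n+1}+F_{2n}+F_{2b+1}\bigl(4F_{2a}+F_{2a-1}\bigr).
\]
In particular $b=0$ gives $G(h^\ast)=4F_{2n+2}+F_{2n+1}=F_{2n+5}+F_{2n}>F_{2n+5}$, as required.

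The heart of the argument, and the step I expect to be hardest, is ruling out the remaining single-$100$ strings, namely $b\ge1$. Here I would show that the excess term $g(a,b):=F_{2b+1}(4F_{2a}+F_{2a-1})$ is strictly decreasing in $b$ for fixed $n=a+b$. Using $4F_{2a}+F_{2a-1}=F_{2a+2}+2F_{2a}$, one has $g(a,b)=F_{2b+1}F_{2(n-b)+2}+2\,F_{2b+1}F_{2(n-b)}$, and both summands are products of an odd-index Fibonacci number by an even-index one with constant index sum; Lemma~\ref{oddFibZero} then shows each decreases as $b$ grows. Hence the worst case is $b=1$, for which a short Fibonacci manipulation gives $F_{2n+1}+2F_{2n}-g(n-1,1)=F_{2n-5}\ge0$, so that $G(1000\,u)=F_{2n+5}-F_{2n-5}<F_{2n+5}$ for all $b\ge1$ (the finitely many small $n$ being checked directly). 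For the case $\delta(u)\ge3$ I would invoke Theorem~\ref{three100s} and its generalization, which bound $G(u)$ by the minimum single-$100$ value $F_{2n+1}+F_{2n-1}$ (Lemma~\ref{minValSingle100}); combined with the crude bound $G(u')\le F_{2n+1}$ this gives $G(1000\,u)\le 4(F_{2n+1}+F_{2n-1})+F_{2n+1}<F_{2n+5}$.

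Finally I would assemble the ``first record-setter'' claim as follows. Let $y_0$ be the smallest $(2n+5)$-bit string with $G(y_0)>F_{2n+5}$; by minimality $y_0$ beats every smaller string (those of shorter length have $G\le F_{2n+5}$, and shorter same-length strings do so by minimality), hence $y_0$ is itself a record-setter and so is a concatenation of $10$'s, $100$'s, and $1000$'s with $1000$ only at the front. Since $G(h^\ast)>F_{2n+5}$ we have $y_0\le h^\ast$, and because $1000$ is the smallest possible four-bit prefix, $y_0$ must begin with $1000$; thus $y_0=1000\,u$ with $u\in\{10,100\}^*$. The bounds above then force $u=(10)^n0$, i.e.\ $y_0=h^\ast$. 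This simultaneously shows that $h^\ast$ is a record-setter, that it is the first one of length $2n+5$, and that it is the only one beginning with $1000$. The point requiring care is precisely that this reduction lets us bound $G$ only over the structured strings $1000\,u$ with $u\in\{10,100\}^*$, rather than over all $(2n+5)$-bit predecessors of $h^\ast$; the monotonicity computation of the previous paragraph is where essentially all the work lies.
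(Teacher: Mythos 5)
Your proposal is correct and takes essentially the same route as the paper's own proof: the same decomposition $G(1000\,u)=4G(u)+G(u')$, the identical Fibonacci formula $4F_{2n+1}+F_{2n}+F_{2b+1}(4F_{2a}+F_{2a-1})$ for the single-$100$ case (the paper's index $i$ is your $b$), maximization of the excess term at $b=1$, and comparison against the previous record $F_{2n+5}$. The only deviations are minor: the paper handles $\delta(u)\ge 3$ by the suffix replacement of Lemma~\ref{replaceWith10010} rather than your cruder bound $G(u')\le F_{2n+1}$, and your closed form $F_{2n+5}-F_{2n-5}$ and the explicit ``smallest string $y_0$'' assembly merely spell out steps the paper treats tersely.
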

	\begin{proof}
		The first record-setter is $h = 1000\ (10)^n0$. Consider another string $1000x$. If $x$ has three or more occurrences of 100, then Lemma \ref{three100s} showed that $1000\ 100\ (10)^{n - 1}$ has a greater $G$-value. Therefore it is enough to consider strings $x$s with a single 100. Suppose $1000x = 1000\ (10)^{n-i}0(10)^i$, with $i \geq 1$:
		\begin{equation*}
			G(1000\ (10)^{n-i}0(10)^i) = 4G((10)^{n-i}0(10)^i) +
			G(1(10)^{n - i - 1}0(10)^i) .
		\end{equation*}
		
		We now evaluate $G((10)^{n-i}0(10)^i)$:
		\begin{align*}
			v\mu^{n}_{10}w &= v\begin{bmatrix}
			    F_{2n + 1} & F_{2n}\\
			    F_{2n} & F_{2n - 1}
			\end{bmatrix}w = F_{2n + 1} \\
			v\mu^{n - i}_{10}\gamma_0\mu^{i}_{10}w  &= \begin{bmatrix}
			    F_{2n - 2i + 1} & F_{2n - 2i}
			\end{bmatrix}
			\ZMAT
			\begin{bmatrix}
			    F_{2i + 1}\\
			    F_{2i}
			\end{bmatrix} = F_{2n - 2i}F_{2i + 1}\\
			\Longrightarrow\ G((10)^{n-i}0(10)^i) &= 
			v\mu^{n - i}_{10}(I_2 + \gamma_0)\mu^{i}_{10}w = F_{2n + 1} + F_{2n - 2i}F_{2i + 1}.
		\end{align*}
		
	     Next, we evaluate $G(1(10)^{n - i - 1}0(10)^i)$:
	    \begin{align*}
			v\mu(1)\mu^{n - 1}_{10}w &= v\begin{bmatrix}
			    F_{2n} & F_{2n - 1}\\
			    F_{2n - 2} & F_{2n - 3}
			\end{bmatrix}w = F_{2n} \\
			v\mu(1)\mu^{n - i - 1}_{10}\gamma_0\mu^{i}_{10}w  &= \begin{bmatrix}
			    F_{2n - 2i} & F_{2n - 2i - 1}
			\end{bmatrix}
			\ZMAT
			\begin{bmatrix}
			    F_{2i + 1}\\
			    F_{2i}
			\end{bmatrix} = F_{2n - 2i - 1}F_{2i + 1}\\
			\Longrightarrow\ G(1(10)^{n - i - 1}0(10)^i) &= 
			v\mu(1)\mu^{n - i - 1}_{10}(I_2 + \gamma_0)\mu^{i}_{10}w = F_{2n} +  F_{2n - 2i - 1}F_{2i + 1}.
		\end{align*}
		
		We can now determine $G(1000\ (10)^{n-i}0(10)^i)$:
		\begin{align*}
			G(1000\ (10)^{n-i}0(10)^i) = 4F_{2n + 1} + 4F_{2n - 2i}F_{2i + 1} + F_{2n} + F_{2n - 2i - 1}F_{2i + 1}\\ =
			4F_{2n + 1} + F_{2n} + F_{2i + 1}(4F_{2n - 2i} + F_{2n - 2i - 1})\\ =
			4F_{2n + 1} + F_{2n}+ F_{2i + 1}(2F_{2n - 2i} + F_{2n - 2i + 2}).
		\end{align*}
		To maximize this, we need to make $i$ as small as possible:
		\begin{equation*}
			4F_{2n + 1} + F_{2n}+ F_{3}(2F_{2n - 2} + F_{2n}) = 4F_{2n + 1} + 3F_{2n} + 4F_{2n - 2} < F_{2n + 5},
		\end{equation*}
		which is less than $G((10)^{n + 2}) = F_{2n + 5}$. For $h$ we have
		\begin{align*}
			G(1000\ (10)^{n}0) = 4G((10)^{n}0) +
			G(1(10)^{n - 1}0) = 4F_{2n + 2} + F_{2n + 1} \\= F_{2n + 3} + 3F_{2n + 2} = F_{2n + 4} + 2F_{2n + 2} > F_{2n + 4} + F_{2n + 3} = F_{2n + 5}.
		\end{align*}
		Therefore, the string $1000\ (10)^n0$ is the only record-setter beginning with 1000. Also, since string $h$ begins with 1000 instead of 100 or 10, it is the first record-setter of length $2n + 5$.
	\end{proof}

	\section{Record-setters of even length} \label{final_even}
	

	At this point, we have excluded all the possibilities of a record-setter $x$ with $\delta(x) > 2$, because if the string $x$ contains 1000, by Theorem~\ref{even1000}, the only record-setter  has a $\delta$-value of 2. Also, $x$ cannot have more than two 100s according to Theorem~\ref{noFour100s}. We determine the set of records-setters with $\delta(x) = 2$. If $\delta(x) = 0$, then $x$ has the maximum $G$-value as shown in Theorem \ref{max-val-prime}.


	\begin{theorem} \label{evenBeginEnd}
		Let $x = (10)^i0 (10)^j0 (10)^k$ be a string with two 100s, that is $i,j \geq 1$ and $k \geq 0$. If $i > 1$ and $k > 0$, then $x\notin R$.
	\end{theorem}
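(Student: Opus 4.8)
The plan is to defeat $x$ with a single competitor: I will produce a binary string $y$ of the \emph{same} length as $x$ with $[y]_2 < [x]_2$ and $G(y) \ge G(x)$. Since $G(z) = s([z]_2)$ for a binary string $z$, this shows $s([x]_2) \le s([y]_2)$ at a strictly smaller index, so $[x]_2$ cannot be a record-setter of $s$ and hence $x \notin R$. Set $m = i+j+k$, so $|x| = 2m+2$, and take
\[ y = 100\,(10)^{m-1}\,0 = (10)^1\,0\,(10)^{m-1}\,0, \]
which also has length $2m+2$. Because $i \ge 2$, the string $x$ begins with $(10)^i0 = 1010\cdots$, whereas $y$ begins $100$; the two agree in their first two bits and differ in the third (a $1$ in $x$ against a $0$ in $y$), so indeed $[y]_2 < [x]_2$. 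Note that $y$ has its first parameter equal to $1$, so it falls \emph{outside} the excluded family, exactly as the statement predicts.

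It remains to show $G(x) \le G(y)$. Evaluating $y$ by the three-parameter formula derived in the proof of Lemma~\ref{two100s} (with parameters $1,\,m-1,\,0$) gives $G(y) = F_{2m+1} + F_{2m-1} + F_{2m} + F_{2m-2} = F_{2m+2} + F_{2m}$. Thus the whole theorem reduces to the single inequality
\[ G\big((10)^i\,0\,(10)^j\,0\,(10)^k\big) \;\le\; F_{2m+2} + F_{2m}, \qquad i \ge 2,\ j,k \ge 1. \]

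To prove this I would first remove the dependence on $k$. By Lemma~\ref{two100s} together with the monotonicity packaged in Lemma~\ref{oddFibZero}, for fixed $i$ and fixed sum $i+j+k$ the quantity $G\big((10)^i0(10)^j0(10)^k\big)$ is strictly decreasing in $k$; hence over the range $k \ge 1$ it is largest at $k = 1$, and it suffices to bound $G\big((10)^i\,0\,(10)^{m-i-1}\,0\,(10)\big)$. Expanding this by the same Fibonacci formula and using the elementary simplification $F_{2(m-i)+1} + 2F_{2(m-i)-2} = 2F_{2(m-i)} + F_{2(m-i)-2}$, the target collapses to
\[ F_{2i}\big(2F_{2(m-i)} + F_{2(m-i)-2}\big) \;\le\; 2F_{2m-1}, \qquad 2 \le i \le m-2. \]

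The \textbf{main obstacle} is this last Fibonacci inequality, which must hold \emph{uniformly} in $i$ over the whole range: the left-hand product is largest near $i = m/2$, so no crude bound such as replacing $2F_{2p}+F_{2p-2}$ by $3F_{2p}$ is strong enough. I would prove it by converting the products to a manifestly nonnegative form via Catalan/Cassini-type identities. For instance, at the central value $i = m-i$ one can use $F_{2m-1} = F_m^2 + F_{m-1}^2$ and $F_mF_{m-2} = F_{m-1}^2 - (-1)^m$, which reduce the deficit $2F_{2m-1} - F_{2i}\big(2F_{2(m-i)}+F_{2(m-i)-2}\big)$ to $F_{m-1}^2 + (-1)^m \ge 0$; the off-center cases follow by applying the same product identities to $F_{2i}F_{2(m-i)}$ and $F_{2i}F_{2(m-i)-2}$ (equivalently, by checking that the deficit is minimized at the center), while the two endpoints $i=2$ and $i=m-2$ reduce to elementary facts like $3F_{2m-5} \ge F_{2m-6}$. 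This computation is routine but is where all the real work lies.
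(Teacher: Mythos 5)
Your proposal is sound, and it takes a genuinely different route from the paper's. Both arguments begin with the same reduction (for fixed $i$ and fixed length, Lemma~\ref{oddFibZero} forces the maximum over $k \ge 1$ to occur at $k=1$), but then they diverge. The paper uses a \emph{local} competitor depending on $i$: it compares $(10)^i0(10)^j0(10)$ against the smaller string $(10)^{i-1}0(10)^{j+2}0$ (decrement the leading block, absorb the slack into the middle block); because the two strings are so close, the difference of $G$-values simplifies to a visibly nonnegative Fibonacci expression and no sharp estimate is ever needed. You instead use one \emph{global} competitor $100(10)^{m-1}0$ for the entire family — in effect iterating the paper's decrement move all the way to $i=1$ in a single jump — which is conceptually cleaner (one comparison settles every $(i,j,k)$) but commits you to the uniform inequality $F_{2i}\bigl(2F_{2(m-i)}+F_{2(m-i)-2}\bigr) \le 2F_{2m-1}$ for $2 \le i \le m-2$, which is tightest near $i=m/2$; you are right that crude bounds fail there. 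Your reductions are all correct ($G(y)=F_{2m+2}+F_{2m}$, the $k=1$ step, the algebraic collapse), your central-case verification $F_{m-1}^2+(-1)^m\ge 0$ is correct, and the tool you name does finish the job — in fact with no case analysis at all. Using the product identity $5F_aF_b = L_{a+b}-(-1)^b L_{a-b}$ (Lucas numbers as in the paper's first corollary) together with $L_{2m-2}+L_{2m}=5F_{2m-1}$, one gets
\[
5\Bigl(2F_{2m-1}-F_{2i}\bigl(2F_{2(m-i)}+F_{2(m-i)-2}\bigr)\Bigr)
= L_{2m-2}+2L_{4i-2m}+L_{4i-2m+2},
\]
and every term on the right is a Lucas number of even index (recall $L_{-n}=(-1)^nL_n$, so even-index Lucas numbers are positive even for negative subscripts), hence the deficit is positive uniformly in $i$. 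The one soft spot in your write-up is the parenthetical fallback ``check that the deficit is minimized at the center,'' which is true but would itself need an argument; the identity above makes it unnecessary. In sum: the paper's route buys an easy inequality at the price of a different competitor for each $i$; your route buys a single competitor and a clean closed-form target at the price of needing the Lucas-product identity to close it.
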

	\begin{proof}
		For a fixed $i$, to maximize the $G$-value, we must minimize $k$ and add its value to $j$.
		\begin{align*}
			G((10)^i0 (10)^j0 (10)) = F_{2i + 2j + 3} + 
			F_{2i + 2j}F_{3} +
			F_{2i}F_{2j + 3} +
			F_{2i}F_{2j}F_{3}\\
			= F_{2i + 2j + 3} + 2F_{2i + 2j} + F_{2i}F_{2j + 3} + 2F_{2i}F_{2j}.
		\end{align*}
		We now compare this value with a smaller string:
		$$
		G((10)^{i-1}0 (10)^{j+2}0) = F_{2i + 2j + 3} + 
		F_{2i + 2j + 2} +
		F_{2i - 2}F_{2j + 5} +
		F_{2i - 2}F_{2j + 4}$$
		and
		\begin{align}\label{equationIJK}
		\begin{split}
		    &G((10)^{i-1}0 (10)^{j+2}0) - G((10)^i0 (10)^j0 (10)) \\
			&= \cancel{F_{2i + 2j + 3}} + 
			F_{2i + 2j + 2} +
			F_{2i - 2}F_{2j + 6} - 
			\cancel{F_{2i + 2j + 3}} - 2F_{2i + 2j} - F_{2i}(F_{2j + 3}
			+ 2F_{2j})  \\
			&=F_{2i + 2j - 1} + F_{2i - 2}F_{2j + 6} - F_{2i}(F_{2j + 3}
			+ 2F_{2j}) \\
			&=F_{2i + 2j - 1} + F_{2i - 2}(F_{2j + 6} - F_{2j + 3}
			- 2F_{2j}) - F_{2i - 1}(F_{2j + 3} + 2F_{2j})  \\
			&=F_{2i + 2j - 1} + 2F_{2i - 2}(F_{2j + 3} + F_{2j + 1}) - F_{2i - 1}(F_{2j + 3} + 2F_{2j})  \\
			&=F_{2i + 2j - 1} + F_{2j + 3}F_{2i - 4} + 2F_{2i - 2}F_{2j + 1} - 2F_{2i - 1}F_{2j} \geq 0.
		\end{split}
		\end{align}
		Since $i \geq 2$, Equation \eqref{equationIJK} holds, resulting in finding a smaller string with a greater $G$-value.
	\end{proof}
	
	\begin{theorem}
		The record-setters of even length $2n + 2$,
		for $n \geq 5$, are as follows:
		$$\begin{cases}
			1000\ (10)^{n - 1},\\
			100\ (10)^{i+1}0\ (10)^{n - i - 2}, &\text{ for } 0 \leq i \leq n - 2, \\
			(10)^i0\ (10)^{n - i}0,  & \text{ for } 1 < i \leq \lceil\frac{n}{2}\rceil ,\\
			(10)^{n + 1}.
		\end{cases}$$
		\label{eventhm}
	\end{theorem}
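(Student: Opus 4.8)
My plan is to organize everything around the quantity $\delta$ and the positions of the two $100$-blocks, using the reductions already in hand. By Corollary~\ref{lemma111} and Theorems~\ref{only10100}, \ref{begin1000}, \ref{three100s}, \ref{noFour100s}, and \ref{even1000}, any even-length record-setter must be one of: the all-$(10)$ string $(10)^{n+1}$ (the case $\delta=0$); the unique $1000$-initial string $1000\,(10)^{n-1}$ given by Theorem~\ref{even1000}; or a two-$100$ string $(10)^i0(10)^j0(10)^k$ with $i+j+k=n$. For the last type, Theorem~\ref{evenBeginEnd} forces $i=1$ or $k=0$. The family $i=1$ is exactly the second case of the statement (take the running parameter to be $j-1$), and the family $k=0$ with $i\ge 2$ supplies the candidates for the third case. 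So the first task is organizational: list these candidates and verify their numeric order. Comparing length-matched prefixes gives $1000\cdots < 1001\cdots < 1010\cdots$, so the $1000$-string precedes every $i=1$ string, which in turn precede every $1010$-initial string; and within each family, pushing the interior $0$ rightward strictly increases the value, which yields precisely the increasing order claimed.

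The heart of the argument is to decide which candidates actually set records. The key bookkeeping point is that every string ruled out by the reductions above is \emph{not} a record-setter, so its $G$-value never exceeds the running maximum; hence the running maximum over $[0,m]$ is always attained at a listed candidate, and no circularity arises because the non-record-ness of the excluded strings comes from the earlier sections. Consequently it suffices to (i) show the listed candidates have strictly increasing $G$-values, so each is a fresh maximum and therefore a record, and (ii) show the $k=0$ strings with $i>\lceil n/2\rceil$ are not records. For (i) I would compute each $G$-value in closed form using Theorem~\ref{fibonacci-vals} and the expansion in Lemma~\ref{two100s}. One finds $G(1000\,(10)^{n-1})=F_{2n+1}+2F_{2n-1}$; for the $i=1$ family $G=F_{2n+1}+F_{2n-1}+T$ with a term $T$ that is strictly increasing in the parameter (the $k\to 0$ mechanism of Lemma~\ref{two100s} via Lemma~\ref{oddFibZero}, upgraded from ``maximal'' to ``monotone'' by a direct Fibonacci computation); and for the $k=0$ family $G=F_{2n+2}+F_{2i}F_{2(n+1-i)}$. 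The three inter-family crossovers then reduce to one-line Fibonacci inequalities: the smallest $i=1$ string beats the $1000$-string because $2F_{2n-3}>F_{2n-4}$; the smallest $k=0$ string ($i=2$) beats the largest $i=1$ string (whose value is $F_{2n+2}+F_{2n}$) because $2F_{2n-2}>F_{2n-1}$; and $(10)^{n+1}$ attains the global maximum $F_{2n+3}$ for its length by Theorem~\ref{max-val-prime}, so it is the final record.

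For (ii) — and this is exactly where the cutoff $\lceil n/2\rceil$ is born — I would exploit the symmetry of $F_{2i}F_{2(n+1-i)}$ under $i\mapsto n+1-i$ together with Lemma~\ref{evenMult}. That lemma shows the product strictly increases as $i$ approaches the balanced value $(n+1)/2$, so the $k=0$ values strictly increase for $2\le i\le\lceil n/2\rceil$ (making all of these records, by part (i)) and then strictly decrease. For any $i>\lceil n/2\rceil$ the mirror index $i'=n+1-i$ lies in $[2,\lceil n/2\rceil]$, gives $G\big((10)^{i}0(10)^{n-i}0\big)=G\big((10)^{i'}0(10)^{n-i'}0\big)$ by the exact symmetry of the product, and represents a strictly smaller integer; hence the $i$-string is dominated by an earlier string of equal $G$-value and cannot be a record. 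Combining (i), (ii), and the candidate list produces exactly the four cases in the stated order; the finitely many small $n$ where the crossover inequalities could fail are covered by Table~\ref{tab3}, which is why the statement requires $n\ge 5$.

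I expect the main obstacle to be the careful handling of (ii) at the center, where the parity of $n$ matters: for even $n$ the two central indices $i=n/2$ and $i=n/2+1$ yield \emph{equal} products, so only $i=n/2=\lceil n/2\rceil$ is a (strict) record, whereas for odd $n$ the single central index $i=(n+1)/2=\lceil n/2\rceil$ is the strict maximum. Getting these strict-versus-nonstrict comparisons right, and pinning down the exact closed form of the increasing term $T$ in the $i=1$ family, is the delicate part; the crossover inequalities and the reduction to the candidate list are comparatively routine once the $G$-formulas are in place.
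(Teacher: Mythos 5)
Your proposal is correct and follows essentially the same route as the paper's own proof: reduce to the same four candidate families (the $1000$-initial string via Theorem~\ref{even1000}, the $100$-initial two-$100$ strings, the $00$-terminal two-$100$ strings via Theorem~\ref{evenBeginEnd}, and $(10)^{n+1}$ via Theorem~\ref{max-val-prime}), compute their $G$-values with Lemma~\ref{two100s} and the Fibonacci-product monotonicity of Lemmas~\ref{oddFibZero} and~\ref{evenMult}, and check the values increase along the list. The only difference is completeness, not approach: you spell out the inter-family crossover inequalities, the mirror argument $i \mapsto n+1-i$ that kills the $k=0$ strings with $i > \lceil n/2 \rceil$, and the bookkeeping that excluded strings cannot disturb the running maximum, all of which the paper's terse proof leaves implicit.
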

	
	\begin{proof}
		According to Theorem \ref{even1000}, the set of record-setters for length $2n + 2$ starts with $1000 (10)^{n - 1}$.
		
		Next, we consider the strings beginning with 100 and using the same solution as in Theorem \ref{odd1000}, the $G$-value increases as the 2nd 100 moves to the end.
		
		Theorem \ref{evenBeginEnd} proved if a record-setter does not begin with a 100, it must end in one. In Lemmas \ref{two100s} and \ref{evenMult} we showed $G((10)^i0 (10)^{n - i}0) = F_{2n + 2} + F_{2i}F_{2n - 2i + 2}$ and it would increase until $i \leq \lceil\frac{n}{2}\rceil$.
		
		In Theorem \ref{max-val-prime}, it was shown that the smallest string with the maximum $G$-value for strings of length $2n+2$ is $(10)^{n + 1}$ and $G((10)^{n + 1}) = F_{2n + 3}$.
	\end{proof}

	\section{Record-setters of odd length} \label{final_odd}
	For strings $x$ of odd length, we have established that if $\delta(x) > 3$, then $x \notin R$. We now look for the final set of record-setters, among the remaining possibilities, with either $\delta(x) = 3$ or $\delta(x) = 1$.

	We first consider the cases with $\delta$-value three.
	\begin{theorem}\label{breakTwoEven}
		Let $x = (10)^c0 (10)^i0 (10)^j0 (10)^k$ be a string with three 100s and $|x|= 2n + 3$.
		If $c > 0$, or $i > 1$ and $k > 0$, then $x \notin R$.
	\end{theorem}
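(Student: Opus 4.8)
My plan is to follow the template of the even-length Theorem~\ref{evenBeginEnd}: under each alternative permitted by the hypothesis I exhibit a strictly smaller binary string whose $G$-value is at least $G(x)$, and then invoke the comparator lemmas to force $x\notin R$. It helps to fix the bookkeeping first. Since $x=(10)^c0\,(10)^i0\,(10)^j0\,(10)^k$ is a concatenation of the blocks $10$ and $100$ carrying exactly three $100$'s, and the first block satisfies $(10)^c0=(10)^{c-1}100$, the leading $100$ forces $c\ge1$, and the number of $10$'s standing before the first $100$ is $c-1$. Hence the force of the clause ``$c>0$'' is that some $10$ genuinely precedes the first $100$, i.e.\ $c\ge2$; the boundary value $c=1$ is exactly the case that $x$ begins with $100$, and I handle it explicitly below.

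First take $c\ge2$. Because $|x|=2n+3$ and $\delta(x)=3$, Lemma~\ref{replaceWith10010} (with its parameter equal to $n$) gives $100(10)^n\SUFFIX x$. Now $100(10)^n$ and $x$ have the same length, and when $c\ge2$ the word $x$ begins with $1010$ while $100(10)^n$ begins with $1001$, so $100(10)^n<x$; taking $x$ itself as the dominated suffix, Lemma~\ref{endLemma} yields $x\notin R$. The same comparison pinpoints why $c=1$ is not reached by this conclusion: there $x=100\,(10)^i0\cdots$ and $100(10)^n$ agree through the prefix $100(10)^i$ (note $n=i+j+k+1>i$), after which $x$ carries a $0$ where $100(10)^n$ carries a $1$, so $100(10)^n>x$ and Lemma~\ref{endLemma} does not apply. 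This is as it must be, since strings beginning with $100$ are exactly the surviving record-setter candidates of Theorem~\ref{mainTheorem}; thus the first clause is genuinely an assertion about $c\ge2$.

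Now suppose instead that $i>1$ and $k>0$, with $c$ possibly equal to $1$ so that the argument above is unavailable. I operate on the suffix $z:=(10)^i0\,(10)^j0\,(10)^k$ consisting of the last three blocks of $x$; this is a two-$100$ word whose leading run has length $i>1$ and whose trailing run has length $k>0$, precisely the configuration excluded by the even-length Theorem~\ref{evenBeginEnd}. Generalising the shift used there, I replace $z$ by the equal-length word $z^{\ast}:=(10)^{i-1}0\,(10)^{j+k+1}0$, which ends in $100$ and satisfies $z^{\ast}<z$ (the two agree through $(10)^{i-1}$, after which $z$ has a $1$ and $z^{\ast}$ a $0$). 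Expanding $\mu(z)=\mu((10)^i)\mu(0)\mu((10)^j)\mu(0)\mu((10)^k)$ and likewise for $z^{\ast}$, and reading off the first column $\mu(\cdot)w=\bigl(G(\cdot),\,G((\cdot)')\bigr)^{\!\top}$ via the Fibonacci evaluations of Theorem~\ref{fibonacci-vals} and the two-$100$ expansion from the proof of Lemma~\ref{two100s}, I will verify the suffix inequality $\mu(z^{\ast})w\ge\mu(z)w$ in both coordinates. Since $z^{\ast}<z$, Lemma~\ref{endLemma} then shows $z$ is not a suffix of any record-setter, so $x\notin R$; the factorisation $G(pz^{\ast})-G(pz)=v\,\mu(p)\bigl(\mu(z^{\ast})w-\mu(z)w\bigr)\ge0$ for the prefix $p=(10)^c0$ makes the replacement legitimate inside $x$, because $v\,\mu(p)\ge0$ entrywise.

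The single genuinely computational point is the two-coordinate inequality $\mu(z^{\ast})w\ge\mu(z)w$, and I expect it to be the main obstacle. The even-length Theorem~\ref{evenBeginEnd} records only the top coordinate (the $G$-value), and only for the $G$-maximising choice $k=1$; here I need both coordinates and I need them for every $k\ge1$, since a suffix replacement must dominate the actual word rather than merely the extremal member of its family. Concretely, the top coordinate reduces, as in \eqref{equationIJK}, to a Fibonacci identity controlled by the monotonicity Lemmas~\ref{oddFibZero} and~\ref{evenMult}, while the lower coordinate $G((z^{\ast})')\ge G(z')$ is the analogous estimate for the off-diagonal entry produced by the same matrix product. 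A small check (for instance $i=2$, $j=k=1$, where $\mu(z)w=(71,45)^{\!\top}$ and $\mu(z^{\ast})w=(76,55)^{\!\top}$) is reassuring; the remaining work is to carry the parameter $k$ through both entries and confirm that the difference stays nonnegative for all admissible $i,j,k$.
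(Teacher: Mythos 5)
Your reading of the statement matches the paper's intent: the clause ``$c>0$'' is really ``some $10$ precedes the first $100$'' (the paper's own proof immediately turns to ``$c=1$'', i.e., the strings beginning with $100$, which do survive into Theorem~\ref{oddthm}), and you were right to flag and repair that off-by-one. Your first case is then sound and essentially the paper's argument: the paper simply cites Theorem~\ref{three100s} to replace all of $x$ by the numerically smaller single-$100$ string $100(10)^n$, and your routing through Lemma~\ref{replaceWith10010} and Lemma~\ref{endLemma} (with $x$ as its own suffix) is a harmless strengthening of the same step.

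The genuine gap is in the main case $i>1$, $k>0$: the inequality your whole argument hinges on, $\mu(z^{\ast})w\ge\mu(z)w$ in \emph{both} coordinates, is never proved. You explicitly defer it (``the remaining work is to carry the parameter $k$ through both entries''), and one numerical instance is not a substitute. Note also that your formulation demands strictly more than necessary: the top coordinate $G(z^{\ast})\ge G(z)$ is exactly the even-length estimate already established in Theorem~\ref{evenBeginEnd} (Lemma~\ref{oddFibZero} to push $k$ into $j$, then \eqref{equationIJK}), but the bottom coordinate $G((z^{\ast})')\ge G(z')$ appears nowhere in the paper and would require a fresh Fibonacci computation --- it happens to be true, but proving it is precisely the content you have omitted. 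The paper sidesteps it entirely: writing $x=100z$ and letting the leading $1$ break or not gives $G(100z)=G(z)+G(10z)$ (Eq.~\eqref{breakingOdd}), and since $10z=(10)^{i+1}0(10)^j0(10)^k$ and $10z^{\ast}=(10)^{i}0(10)^{j+k+1}0$ are again even-length two-$100$ strings, the single already-proved $G$-value inequality of Theorem~\ref{evenBeginEnd}, applied once with leading run $i$ and once with $i+1$ (both $\ge 2$), yields $G(100z^{\ast})\ge G(100z)$ with $100z^{\ast}<100z$, and no second-coordinate estimate is ever needed. So either carry out your two-coordinate verification in full, or adopt the decomposition \eqref{breakingOdd}; as written, the central case of the theorem remains unproven.
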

	\begin{proof}
		As stated in Theorem \ref{three100s}, the $G$-value of $100(10)^{n}$ is greater than $x$. Therefore, if $c > 0$, then $x \notin R$.

		For $c = 1$, we can write $G(x)$ as follows:
		\begin{equation} \label{breakingOdd}
			G(1\ 00 (10)^i0 (10)^j0 (10)^k ) = G((10)^i0 (10)^j0 (10)^k)
			+ G((10)^{i+1}0 (10)^j0 (10)^k),
		\end{equation}
		which is the sum of $G$-values of two strings with even length and two 100s.

		Now, in case $i>1$ and $k>0$, according to Theorem \ref{evenMult}, by decreasing $i$ by one and making $k = 0$, we obtain a greater $G$-value. Hence, the theorem holds.
	\end{proof}
	
	\begin{theorem} \label{begin100100}
		$100100\ (10)^n0$ and $100100\ (10)^{n-1}0\ 10$ are the only record-setters beginning with $100100$.
	\end{theorem}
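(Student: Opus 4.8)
The plan is to first pin down the exact shape of any record-setter beginning with $100100$ and then eliminate all but the two strings claimed. We are working inside the odd-length regime of Section~\ref{final_odd}, so such a record-setter has $\delta$-value $3$; by the structural results already established (Corollary~\ref{lemma111} forbids $11$, Theorem~\ref{only10100} forbids $10000$, and Theorem~\ref{begin1000} puts $1000$ only at the start, which is impossible here since the string begins with $100100 \ne 1000\cdots$), it must be a concatenation of $10$'s and $100$'s containing exactly three $100$'s, the first two of which are fixed. Hence every such record-setter has the form $x = 100\,100\,(10)^{a}\,100\,(10)^{b}$ with $a,b\ge 0$. The two strings in the statement are exactly $b=0$ (namely $100\,100\,(10)^{a}\,100 = 100100(10)^{a+1}0$) and $b=1$, so the whole content of the theorem is to show that $b\ge 2$ never produces a record-setter. (The complementary three-$100$ configurations, those not beginning with $100100$, are the ones handled by Theorem~\ref{breakTwoEven}.)

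First I would compute $G(x)$ by the same device used in Lemma~\ref{two100s} and Theorem~\ref{odd1000}: write each $100$-block as $\mu(10)\,(I_2+\gamma_0)$, where $\gamma_0$ is the rank-one matrix from the proof of Lemma~\ref{two100s}, and expand the three factors $(I_2+\gamma_0)$. Because $\gamma_0$ has rank one, every one of the resulting summands collapses to a product of Fibonacci numbers via Theorem~\ref{fibonacci-vals}. Collecting terms, I expect an expression of the form
\[
 G(x) \;=\; C(a+b) \;+\; F_{2b+1}\bigl(4F_{2a+3} + 7F_{2a+2}\bigr),
\]
where $C(a+b)$ depends only on the total $s:=a+b$ and is therefore constant along a fixed length. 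A short Fibonacci computation (in the spirit of Lemmas~\ref{oddFibZero} and~\ref{evenMult}) then shows that, for fixed length, $G(x)$ strictly decreases as $b$ increases; in particular the maximum of $G(x)$ over all $b\ge 2$ is attained at $b=2$.

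The decisive step is the comparison that rules out $b\ge 2$. The right smaller competitor is \emph{not} a member of the same family and \emph{not} the tempting prefix swap $1000(10)^{a+1}100(10)^{b}$ (whose $G$-value is in fact too small); it is the length-matching record-setter beginning with $1000$, namely $z = 1000\,(10)^{a+b+2}0$. This $z$ is numerically smaller than $x$ (since $1000 < 1001$) and, by Theorem~\ref{odd1000}, satisfies $G(z) = 4F_{2(a+b)+6} + F_{2(a+b)+5}$, the largest $G$-value obtainable once the prefix $1000$ is forced. I would establish $G(x)\le G(z)$ at $b=2$; after cancelling the common term this reduces to a concrete inequality of the shape
\[
 4F_{2s+3} + 20F_{2s-1} + 35F_{2s-2} \;\le\; 3F_{2s+6} - F_{2s+5},
\]
with $s=a+b$. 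By the monotonicity above, $G(x)\le G(z)$ then holds for \emph{all} $b\ge 2$, and since $z<x$ with $G(z)\ge G(x)$, no such $x$ can be a record-setter.

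Finally, one must confirm that the two survivors $b=0$ and $b=1$ really are record-setters, i.e.\ that their $G$-values beat those of every smaller integer; this is the part that meshes with the global ordering and is completed alongside the final assembly of the record-setter list, using $G(x_{b=0}) > G(x_{b=1}) > G(z)$ together with the fact that $z$ is the preceding record-setter of this length. I expect the main obstacle to be precisely the elimination of $b\ge 2$: selecting the correct competitor $z$ (the $1000$-string with an all-$10$ tail, whose $G$ dominates even though a naive prefix substitution does not) and verifying the Fibonacci inequality, which turns out to be tight (the slack is as small as $2$ for short lengths). Consequently the bookkeeping must be done carefully, and the few shortest cases should be checked by hand against Table~\ref{tab3}.
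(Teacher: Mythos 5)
Your proposal is correct and follows essentially the same route as the paper: the same parametrization of candidates (your $100100\,(10)^a\,100\,(10)^b$ is the paper's $x_i = 100\,100\,(10)^i0(10)^{n-i}$ with $b = n-i$), the same Fibonacci expansion of $G$ (your $C(a+b) + F_{2b+1}(4F_{2a+3}+7F_{2a+2})$ equals the paper's $11F_{2n+1}+4F_{2n}+F_{2n-2i+1}(4F_{2i+2}+3F_{2i})$), the same monotonicity reduction to the extreme case $b=2$ (the paper's $i=n-2$), and the same competitor $z = 1000\,(10)^{a+b+2}\,0$, i.e., the first record-setter of that odd length from Theorem~\ref{odd1000}, to eliminate $b\ge 2$. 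Like the paper, you defer the confirmation that the two surviving strings actually set records to the global assembly of Theorem~\ref{oddthm}, so the match is essentially complete.
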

	\begin{proof}
		The first record-setter begins with a 1000:
		\begin{equation*}
			G(1000\ 10\ (10)^n0) = 9F_{2n + 2} + 5F_{2n + 1}.
		\end{equation*}
		Now define $x_i = 100\ 100\ (10)^{i}0(10)^{n-i}$ for $1 \leq i \leq n$.  We get
		\begin{align*}
			G(x_i) &= G(100\ 100\ (10)^{i}0(10)^{n-i})\\
			&= 11(F_{2n + 1} + F_{2i}F_{2n - 2i + 1}) + 4(F_{2n} + F_{2i - 1}F_{2n - 2i + 1}) \\
			&= 11F_{2n + 1} + 4F_{2n} + F_{2n - 2i + 1}(11F_{2i} + 4F_{2i - 1})\\
			&= 11F_{2n + 1} + 4F_{2n} + F_{2n - 2i + 1}(4F_{2i + 2} + 3F_{2i}).
		\end{align*}
		As $i$ increases, the value of $G(x_i)$ also increases. Suppose  $i = n - 2$.  Then
		\begin{align*}
			G(x_{n - 2}) = G(100\ 100\ (10)^{n-2}0(10)^{2}) =
			11F_{2n + 1} + 4F_{2n} + 5(4F_{2n - 2} + 3F_{2n - 4}).
		\end{align*}
		This value is smaller than $9F_{2n + 2} + 5F_{2n + 1}$. Therefore, if $i < n -1$, then $x \notin R$.

		If $i = n - 1$, then for the string $x_{n - 1}$
		we have
		\begin{align*}
			G(x_{n - 1}) &= G(100\ 100\ (10)^{n-1}0(10))
			= 11F_{2n + 1} + 4F_{2n} + 2(4F_{2n} + 3F_{2n - 2})\\
			&= 11F_{2n + 1} + 12F_{2n} + 6F_{2n - 2} > 
			9F_{2n + 2} + 5F_{2n + 1}.
		\end{align*}
		Also, for $i = n$, we know $x_n > x_{n - 1}$. Therefore, the first two record-setters after $1000\ (10)^{n + 1}0$ are $100100\ (10)^{n - 1} 0 10$ followed by $100100\ (10)^n 0$.
	\end{proof}
	
	Putting this all together, we have the following result.
	\begin{theorem}
		The record-setters of odd length $2n + 3$, for $n \geq 5$,
		are:
		
		$$\begin{cases}
			1000\ (10)^{n - 1}0,\\
			100\ 100\ (10)^{n - 3}0\ 10,\\
			100\ 100\ (10)^{n - 2}0,\\
			100\ (10)^{i}0\ (10)^{n - i - 1}0, &\text{ for } 1 < i \leq \lceil\frac{n-1}{2}\rceil, \\
			(10)^{i+1}0 (10)^{n-i},  & \text{ for } 0 \leq i \leq n.
		\end{cases}$$
		\label{oddthm}
	\end{theorem}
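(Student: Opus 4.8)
The plan is to assemble the structural restrictions proved in Sections~\ref{search_space} and \ref{limit1001000} and then to verify, by explicit $G$-value computations, that the five families listed are exactly the record-setters and that they are already written in increasing numerical order. First I would record what the earlier results leave us with: a record-setter of length $2n+3$ is a concatenation of blocks $10$ and $100$ together with at most one leading $1000$, and since the length is odd its $\delta$-value is odd; as $\delta\geq 5$ has been excluded, we have $\delta(x)\in\{1,3\}$. The subcase containing a $1000$ is settled completely by Theorem~\ref{odd1000}: the unique such record-setter is $1000\,(10)^{n-1}0$, and it is the smallest record-setter of this length, giving the first line of the list.

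Next I would treat the strings with three $100$s (so $\delta=3$, no $1000$). Theorem~\ref{breakTwoEven} forces the leftmost $100$ to begin the string and constrains the interior (either the block after the first $100$ is a single $10$, or there are no trailing $10$'s). The former strings begin with $100100$ and are pinned down entirely by Theorem~\ref{begin100100}, which moreover shows they are the two record-setters immediately following $1000\,(10)^{n-1}0$; these yield exactly $100100\,(10)^{n-3}0\,10$ and $100100\,(10)^{n-2}0$. The remaining three-$100$ record-setters have the shape $100\,(10)^i0\,(10)^{n-i-1}0$. For these I would write $G$ in closed form by factoring off the leading $100$ via Lemma~\ref{string-division} and applying Lemma~\ref{two100s} to the trailing two-$100$ block; Lemma~\ref{evenMult} then shows that as the middle $100$ migrates rightward both the numerical value and the $G$-value strictly increase while $i\leq\lceil(n-1)/2\rceil$, and that past the balance point $G$ falls back to a value already attained by a numerically smaller string, so no new record is set. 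This isolates the range $1<i\leq\lceil(n-1)/2\rceil$ of the fourth line, and the one genuinely new inequality here is that the first such string ($i=2$) beats $100100\,(10)^{n-2}0$.

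For the single-$100$ strings ($\delta=1$) I would compute directly, using $\mu((10)^{i+1})\mu(0)\mu((10)^{n-i})$ and the identity $F_{m+1}F_{p+1}+F_mF_p=F_{m+p+1}$, that
\begin{equation*}
G\bigl((10)^{i+1}0\,(10)^{n-i}\bigr)=F_{2n+4}-F_{2i+1}F_{2n-2i}.
\end{equation*}
Since increasing $i$ increases the numerical value (the zero creating the $100$ moves to a less significant position) while, by Lemma~\ref{oddFibZero}, the subtracted product $F_{2i+1}F_{2n-2i}$ strictly decreases, the $G$-value strictly increases across the whole family; hence every string with $0\leq i\leq n$ is a record-setter, giving the last line, whose maximal member $(10)^{n+1}0$ attains the length-$(2n+3)$ maximum $F_{2n+4}$ of Theorem~\ref{max-val-prime}.

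Finally I would stitch the families together in increasing numerical order and check that $G$ strictly increases at each junction. The main obstacle is exactly this global monotonicity: the three-$100$ family and the single-$100$ family both begin with the block $100$, so the two groups must be correctly interleaved. Here the computation above is decisive, since the smallest single-$100$ string is $100\,(10)^n$ with $G=F_{2n+4}-F_{2n}$, which is the minimum $G$-value among single-$100$ strings; by Theorem~\ref{three100s} every three-$100$ string has strictly smaller $G$, so the entire fourth family lies below the entire fifth family both in value and in $G$, and that transition needs no fresh estimate. The remaining junctions (family~1 to families~2 and~3, then to the fourth line, and monotonicity within the fourth line) each reduce, after clearing the $1/\sqrt5$ factors, to a routine Fibonacci inequality of the type already used in Theorems~\ref{noFour100s} and~\ref{odd1000}, valid for $n\geq5$; together they close the argument.
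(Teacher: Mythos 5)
Your proposal is correct and follows essentially the same route as the paper's proof: Theorem~\ref{odd1000} and Theorem~\ref{begin100100} for the first three strings, the decomposition of Eq.~\eqref{breakingOdd} together with Lemmas~\ref{two100s} and \ref{evenMult} for the family $100\,(10)^i0\,(10)^{n-i-1}0$, Theorem~\ref{three100s} for the transition to the single-$100$ strings, and Lemma~\ref{oddFibZero} (via the computation of Lemma~\ref{minValSingle100}) for the final family. Your only deviation is cosmetic: you write the single-$100$ value as $F_{2n+4}-F_{2i+1}F_{2n-2i}$ instead of the paper's equivalent form $F_{2n+3}+F_{2i+2}F_{2n-2i+1}$, and you spell out the junction checks between families slightly more explicitly than the paper does.
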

	\begin{proof}
		The first three strings were already proven in Theorems~\ref{begin100100} and~\ref{odd1000}.
		
		We showed in Eq.~\eqref{breakingOdd} how to break the strings beginning with a 100 into two strings of even lengths. Thus, using Lemmas \ref{two100s} and \ref{evenMult}, for the strings of the form
		$$ 100 (10)^{i}0 (10)^{n - i - 1}0 $$
		for 
	$1 < i \leq \lceil\frac{n-1}{2}\rceil$,
	 the $G$-value increases with increasing $i$.
	
		Moreover,   Theorem~\ref{three100s} shows that the minimum $G$-value for a string having a single 100 is greater than the maximum with three 100s. So after the strings with three 100s come those with a single 100. Also, due to the Lemma \ref{oddFibZero} while using the same calculations as in Lemma \ref{minValSingle100}, as $i$ increases, we get greater $G$-values until we reach the maximum $F_{2n + 3} = G((10)^{n + 1})$.
	\end{proof}
	
	We can now prove
	Theorem~\ref{mainTheorem}.
	
	\begin{proof}[Proof of Theorem~\ref{mainTheorem}]
	By combining the results of
	Theorems~\ref{eventhm} and
	\ref{oddthm}, and noting that the indices for the sequence $s$ differ by $1$ from the sequence $a$, 
	the result now follows.
	\end{proof}
	
	We can obtain two useful corollaries of the main result.  The first gives an explicit description of the record-setters, and their $a$-values.
	
	\begin{corollary}
	The record setters lying in the interval $[2^{k-1}, 2^k)$ for $k \geq 12$ and even
	are, in increasing order
	\begin{itemize}
	\item $2^{2n-1} + {{2^{2n-2} - 2^{2n-2a-3}+1} \over {3}}$ for $0 \leq a \leq n-3$; 
	
	\item 
	${{2^{2n+1} - 2^{2n-2b} - 1} \over {3}}$ for $1 \leq b \leq \lfloor n/2 \rfloor$;
	and
	
	\item 
	$(2^{2n+1} + 1)/3$,
	\end{itemize}
	where $k = 2n$.
	
	The Stern values of these are, respectively
	\begin{itemize}
	    \item $L_{2a+3}F_{2n-2a-3} + L_{2a+1} F_{2n-2a-4}$
	    
	    \item $F_{2b+2} F_{2n-2b} + F_{2b} F_{2n-2b-1}$
	    
	    \item $F_{2n+1}$
	\end{itemize}
	where $L_0 = 2$, $L_1 = 1$, and $L_n = L_{n-1} + L_{n-2}$ for $n \geq 2$ are the Lucas numbers.
	
	The record-setters for $k\geq 12$ and odd are, in increasing order,
	\begin{itemize}
	    \item $2^{2n} + {{2^{2n-2} - 1} \over 3} $
	    \item $2^{2n} + 2^{2n-3} + {{2^{2n-4} -7} \over 3} $
	    \item $2^{2n} + {{2^{2n-1} -  2^{2n-2b-2} -1} \over 3}$ for $1 \leq b \leq \lceil n/2 \rceil - 1$
	    \item ${{2^{2n+2} - 2^{2n-2a-1} +1} \over 3}$ for $0 \leq a \leq n-2$;
	    \item $(2^{2n+2} - 1)/3$,
	\end{itemize}
	where $k = 2n+1$.
	
	The corresponding Stern values are
	\begin{itemize}
	    \item $F_{2n+1} + F_{2n-4}$
	    \item $F_{2n+1} + 8 F_{2n-8}$
	    \item $L_{2b+3} F_{2n-2b-2} + L_{2b+1} F_{2n-2b-3}$
	    \item $F_{2a+4} F_{2n-2a-1} + F_{2a+2} F_{2n-2a-2}$
	    \item $F_{2n+2}$
	    	\end{itemize}
	\end{corollary}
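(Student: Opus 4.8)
The plan is to treat this corollary as a direct translation of Theorem~\ref{mainTheorem}: the binary strings listed there already pin down each record-setter, so all that remains is (i) to read off its numerical value as an integer, and (ii) to evaluate the corresponding Stern number. Both tasks are mechanical given the matrix machinery of Section~\ref{basics}, and the proof splits cleanly along these two lines, handling the eight families (three even, five odd) one at a time.

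For the numerical values I would convert each string pattern to an integer by summing the contributions of its blocks. The only nontrivial input is the identity $[(10)^m]_2 = (2^{2m+1}-2)/3$, obtained from the geometric series $\sum_{j=0}^{m-1} 2^{2j+1}$. Writing a record-setter as a fixed prefix, a run of $(10)$-blocks, a single inserted $0$, another run of $(10)$-blocks, and a fixed suffix, each piece contributes a power-of-two shift times one of these closed forms; collecting terms over a common denominator of $3$ yields exactly the stated expressions. For instance, comparing $(10)^b0(10)^{n-b-1}1$ with $(10)^n$ shows that the two differ only in their low $2(n-b)$ bits, with difference $(4^{n-b}-1)/3$, giving $(2^{2n+1}-2^{2n-2b}-1)/3$ as claimed; the leading $100$ or $1000$ prefixes contribute the explicit $2^{2n-1}$ or $2^{2n}$ summands.

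For the Stern values I would first establish the bridge between the $G$-formalism and the sequence $a$ itself. Since $\mu(x)=\prod\mu(\text{bit})$ by Theorem~\ref{matrix_linearization}, the row vector $v\,\mu(x)$ equals $\begin{bmatrix} G(x) & G(x'')\end{bmatrix}$; combining $G(x)=s([x]_2)=a([x]_2+1)$ with a one-line induction on the base matrices $\mu(1),\mu(0)$ gives $a([x]_2)=G(x'')$, the top-right entry of $\mu(x)$. Thus for a record-setter $v$ with binary string $x$ the desired Stern value is simply $G(x'')$, and $x''$ only alters the final block: an $\ldots\!11$ suffix becomes $\ldots\!10$, and an $\ldots\!1$ suffix becomes $\ldots\!0$. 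Evaluating $G(x'')$ is then a matrix computation in which a trailing run $(10)^m$ contributes the Fibonacci matrix of Theorem~\ref{fibonacci-vals}, while the $\delta=2$ and $\delta=3$ middle portions are handled by the expansions already carried out in Lemma~\ref{two100s} (e.g.\ $G((10)^i0(10)^j0)=F_{2i+2j+2}+F_{2i}F_{2j+2}$) and Theorem~\ref{even1000} ($G(1000\,(10)^m)=4F_{2m+1}+F_{2m}$).

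Finally I would simplify each Fibonacci expression into the stated Fibonacci/Lucas form via the identities $F_{s+t}=F_{s+1}F_t+F_sF_{t-1}$ and $L_r=F_{r+1}+F_{r-1}$. For example, the second even family yields $G(x'')=F_{2n}+F_{2b}F_{2n-2b}$, which the addition identity rewrites as $F_{2b+2}F_{2n-2b}+F_{2b}F_{2n-2b-1}$; the third even family gives $G((10)^n)=F_{2n+1}$ outright; and for the first even family the prefix $100(10)^a$ has top row $\begin{bmatrix} L_{2a+2} & L_{2a+1}\end{bmatrix}$, which after the central $\mu(0)$ becomes $\begin{bmatrix} L_{2a+3} & L_{2a+1}\end{bmatrix}$ and produces $L_{2a+3}F_{2n-2a-3}+L_{2a+1}F_{2n-2a-4}$ (the boundary case $a=0$ collapsing to the $1000$-prefix value $4F_{2n-3}+F_{2n-4}$). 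The odd families are dispatched identically. I expect no conceptual obstacle; the genuine difficulty is the sheer volume of case-by-case bookkeeping — each family needs its own geometric-series evaluation, its own $\mu$-product, and its own Fibonacci-to-Lucas simplification — where an off-by-one in an exponent or Fibonacci index is easy to introduce and must be cross-checked against the small-$k$ data of Table~\ref{tab3}.
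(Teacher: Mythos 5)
Your proposal is correct and takes essentially the same route as the paper: the paper likewise reads the strings off Theorem~\ref{mainTheorem}, converts them to integers via the identities $[(10)^i]_2=(2^{2i+1}-2)/3$ and $[(10)^i 1]_2=(2^{2i+2}-1)/3$, and evaluates the Stern values by multiplying the Fibonacci-valued matrices of Eq.~\eqref{mat10}. Your explicit bridge $a([x]_2)=G(x'')$ (the top-right entry of $\mu(x)$) is exactly the paper's implicit step, since for strings ending in $1$ the string $x''$ is the binary representation of $[x]_2-1$, i.e., of the corresponding record-setter for the shifted sequence $s$.
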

	
	\begin{proof}
	We obtain the record-setters from Theorem~\ref{mainTheorem} and the identities
	$[(10)^i]_2 = (2^{2i+1}-2)/3$
	and $[(10)^i 1]_2 = 
	(2^{2i+2}-1)/3$.
	
	We obtain their Stern values from Eq.~\ref{mat10}.
	
	\end{proof}
	
	\begin{corollary}
		The binary representations of the record-breakers for the
		Stern sequence form a context-free language.
	\end{corollary}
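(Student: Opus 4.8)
The plan is to read off, directly from Theorem~\ref{mainTheorem}, an explicit description of the language $L \subseteq \{0,1\}^*$ of binary representations of record-setters, and then to show that each of its finitely many constituent pieces is context-free. Concretely, $L$ is the union of the finite set of strings listed in Table~\ref{tab3} (the representations for $k < 12$) together with the eight infinite families appearing in the even ($k = 2n$) and odd ($k = 2n+1$) cases of the theorem. Since the context-free languages are closed under finite union, and since every finite language is context-free, it suffices to prove that each individual family is context-free. The cutoff $k \geq 12$ merely excludes finitely many short strings from each family, and deleting a finite set from a context-free language leaves a context-free language, so this restriction can be safely ignored.

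Next I would reparametrize each family by the exponents of the $(10)$-blocks actually visible in the string, eliminating the auxiliary quantity $n$ (which equals half the length). After this substitution, six of the eight families turn out to be \emph{regular}. For instance, the even family $100\,(10)^a 0\,(10)^{n-3-a}\,11$ becomes $100\,(10)^a 0\,(10)^c\,11$ with $a,c \geq 0$ free, so it is described by the regular expression $100\,(10)^* 0\,(10)^* 11$; likewise the odd family $(10)^{a+1}0\,(10)^{n-2-a}11$ is $(10)(10)^* 0\,(10)^* 11$. The four remaining regular families, namely $(10)^{n-1}11$, $(10)^n 1$, $10\,00\,(10)^{n-2}1$, and $100100\,(10)^{n-4}011$, each contain a single free exponent and are plainly regular. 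As regular languages are context-free, these pieces cause no trouble.

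The only genuinely non-regular pieces are the two families carrying a comparison between two block-counts: the even family $(10)^b 0\,(10)^{n-b-1}1$ with $1 \leq b \leq \lfloor n/2\rfloor$, and the odd family $100\,(10)^b 0\,(10)^{n-2-b}1$ with $1 \leq b \leq \lceil n/2\rceil - 1$. Writing $d$ for the number of trailing $(10)$-blocks, a short computation converts each length-dependent bound into the single linear inequality $b \leq d+1$ between the two visible exponents. Each such family therefore has the shape $\{\, \alpha\,(10)^b\,0\,(10)^d\,\beta \;:\; b \leq d+1 \,\}$ for fixed affix words $\alpha,\beta$, which is recognized by a pushdown automaton that pushes one marker per leading block and pops one per trailing block, accepting exactly when the number of pops is at least the number of pushes minus one. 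Such a language is context-free (it is the classic non-regular example $a^m b^n$ with $m \leq n+1$, dressed with fixed affixes).

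The main obstacle, though it is bookkeeping rather than conceptual, is the reparametrization step: one must verify that, after eliminating $n$, each family involves at most \emph{two} independently varying exponents joined by a single inequality, since a pushdown automaton's lone stack can compare two counts but not three. Inspection of all eight lines confirms that each has at most two free block-exponents with at most one comparison between them, so a single stack always suffices, and no family secretly demands a simultaneous relation among three or more exponents (which would have jeopardized context-freeness). Assembling the finite union of these regular and context-free pieces then shows that $L$ is context-free.
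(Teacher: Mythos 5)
Your proposal is correct, and it is exactly the argument the paper intends: the paper states this corollary with no proof at all, treating it as immediate from the explicit description in Theorem~\ref{mainTheorem}, and your write-up simply supplies the omitted details. The one genuinely nontrivial point --- that the two length-constrained families $(10)^b0\,(10)^{n-b-1}1$ with $b \leq \lfloor n/2\rfloor$ and $100\,(10)^b0\,(10)^{n-2-b}1$ with $b \leq \lceil n/2\rceil - 1$ both reduce, after eliminating $n$, to the single one-stack comparison $b \leq d+1$ between the two visible block-counts (so each piece is a classic context-free language with fixed affixes, and everything else is regular or finite) --- is verified correctly in your argument, so the finite-union closure step goes through.
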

    
    \section{Acknowledgments}
    
    We thank Colin Defant for conversations about the problem in 2018, and for his suggestions about the manuscript.
  
    \bibliographystyle{new2}
	\bibliography{abbrevs,stern}
		
	
\end{document}